\newtheorem{thm}{Theorem}[section]
\newtheorem{cor}[thm]{Corollary}
\newtheorem{lem}[thm]{Lemma}
\newtheorem{prop}[thm]{Proposition}
\newtheorem{exam}[thm]{Example}
\theoremstyle{definition}
\newtheorem{defn}[thm]{Definition}
\theoremstyle{remark}
\newtheorem{rem}[thm]{Remark}
\numberwithin{equation}{section}
\begin{document}

\title{On the Structure of Irreducible Yetter-Drinfeld Modules over Quasi-Triangular
Hopf Algebras}
\author{Zhimin Liu\thanks{E-mail: zhiminliu13@fudan.edu.cn} \hspace{1cm} Shenglin Zhu
\thanks{\textbf{CONTACT}: mazhusl@fudan.edu.cn, School of Mathematical
Sciences, Fudan University, Shanghai 200433, China.} \thanks{This work was
supported by NNSF of China (No. 11331006).}\\Fudan University, Shanghai 200433, China}
\date{}
\maketitle

\begin{abstract}
Let $\left(  H,R\right)  $ be a finite dimensional semisimple and cosemisimple
quasi-triangular Hopf algebra over a field $k$. In this paper, we give the
structure of irreducible objects of the Yetter-Drinfeld module category ${}%
{}_{H}^{H}\mathcal{YD}.$ Let $H_{R}$ be the Majid's transmuted braided group
of $\left(  H,R\right)  ,$ we show that $H_{R}$ is cosemisimple. As a
coalgebra, let $H_{R}=D_{1}\oplus\cdots\oplus D_{r}$ be the sum of minimal
$H$-adjoint-stable subcoalgebras. For each $i$ $\left(  1\leq i\leq r\right)
$, we choose a minimal left coideal $W_{i}$ of $D_{i}$, and we can define the
$R$-adjoint-stable algebra $N_{W_{i}}$ of $W_{i}$. Using Ostrik's theorem on
characterizing module categories over monoidal categories, we prove that
$V\in{}_{H}^{H}\mathcal{YD}$ is irreducible if and only if there exists an $i$
$\left(  1\leq i\leq r\right)  $ and an irreducible right $N_{W_{i}}$-module
$U_{i}$, such that $V\cong U_{i}\otimes_{N_{W_{i}}}\left(  H\otimes
W_{i}\right)  $.

Our structure theorem generalizes the results of Dijkgraaf-Pasquier-Roche and
Gould on Yetter-Drinfeld modules over finite group algebras. If $k$ is an
algebraically closed field of characteristic, we stress that the
$R$-adjoint-stable algebra $N_{W_{i}}$ is an algebra over which the dimension
of each irreducible right module divides its dimension.

\end{abstract}

\textbf{KEYWORDS}: Quasi-triangular Hopf algebra, Yetter-Drinfeld module,
Transmuted braided group

\textbf{2000 MATHEMATICS SUBJECT CLASSIFICATION}: 16W30

%\footnote[1]{Corresponding author}
%\thanks{*}%
%\dedicatory{}%
%\commby{}%
%----------------------------------------------------------------

%----------------------------------------------------------------

\section{Introduction}

\label{section-intro}Yetter-Drinfeld modules over a bialgebra were introduced
by Yetter \cite{yetter1990quantum} in 1990. For any finite dimensional Hopf
algebra $H$ over a field $k$, Majid \cite{majid1991doubles} identified the
Yetter-Drinfeld modules with the modules over the Drinfeld double $D(H^{cop})$
by giving the category equivalences ${}_{H}^{H}\mathcal{YD}\approx{}_{H^{cop}%
}\mathcal{YD}^{H^{cop}}\approx{}_{D(H^{cop})}\mathcal{M}$.

If $H=kG$ is the group algebra of a finite group $G$ and $k$ is algebraically
closed of characteristic zero, then the irreducible modules over $D\left(
H\right)  $ are completely described by Dijkgraaf-Pasquier-Roche in
\cite{dijkgraaf1992quasi} and independently by Gould in
\cite{gould1993quantum}. For any $g\in G$, let $C\left(  g\right)  =\left\{
h\in G\mid hgh^{-1}=g\right\}  $ be the centralizer subgroup of $g$ in $G$. If
$U$ is an irreducible module of $kC\left(  g\right)  $, then $H\otimes
_{kC\left(  g\right)  }U$ is an irreducible module in ${}_{H}^{H}\mathcal{YD}%
$, where $f\left(  h\otimes u\right)  =fh\otimes u$ and $\rho\left(  h\otimes
u\right)  =hgh^{-1}\otimes h\otimes u$ ($\forall f,h\in G$, $u\in U$). All the
irreducible modules in ${}_{H}^{H}\mathcal{YD}$ can be constructed in this
way. For detail, one can refer to \cite{dijkgraaf1992quasi}.

A natural question arises. Can we construct all the irreducible
Yetter-Drinfeld modules in ${}_{H}^{H}\mathcal{YD}$ for a more general Hopf
algebra $H$?

It might be a difficult problem to classify all the Yetter-Drinfeld modules
over general Hopf algebras. For factorizable Hopf algebras, this question was
studied by Reshetikhin and Semenov-Tian-Shansky \cite{reshetikhin1988quantum}
in 1988, where they proved that the Drinfeld double of any finite dimensional
Hopf algebra is factorizable; and if a finite dimensional Hopf algebra $H$ is
factorizable, then as Hopf algebras the Drinfeld double $D\left(  H\right)  $
is isomorphic to a twist of $H\otimes H$. Therefore, the Yetter-Drinfeld
modules can be constructed consequently. In 2001, Schneider
\cite{schneider2001some} proved that $H$ is factorizable if and only if the
double $D\left(  H\right)  $ is isomorphic to a twist of the tensor product
$H\otimes H.$

In this paper, we study the structure of Yetter-Drinfeld modules of a
semisimple and cosemisimple quasi-triangular Hopf algebra $H$ over a field. We
will give a characterization of all the irreducible modules in $_{H}%
^{H}\mathcal{YD}$.

Let $k$ be a field, and let $\left(  H,R\right)  $ be a finite dimensional
semisimple and cosemisimple quasi-triangular Hopf algebra over $k.$ Denote by
$H_{R}$ the Majid's \cite{Majid1991Braided} transmuted braided group, then
$H_{R}$ is a coalgebra on $H$ with a twisted coproduct. We show in Section
\ref{Section-struct-HR} that any Yetter-Drinfeld submodule of $H\in{}_{H}%
^{H}\mathcal{YD}$ is a subcoalgebra of $H_{R}$, and $H_{R}$ is cosemisimple.
As a Yetter-Drinfeld module, $H\in$ ${}_{H}^{H}\mathcal{YD}$ has a
decomposition
\[
H=D_{1}\oplus\cdots\oplus D_{r}\text{,}%
\]
of irreducible modules, which is also the decomposition of minimal
$H$-adjoint-stable subcoalgebras of $H_{R}$.

By a result of H. Zhu~\cite{Zhu2015Relative}, there exists a natural category
isomorphism $_{H}^{H}\mathcal{YD}\rightarrow{}_{H}^{H_{R}}\mathcal{M}$. Then,
canonically $_{H}^{H}\mathcal{YD}={}_{H}^{H_{R}}\mathcal{M}$ can be viewed as
a module category over the tensor category ${}_{H^{cop}}\mathcal{M}$. The
general theory of module categories over a monoidal category was developed by
Ostrik in \cite{Ostrik2003module}. Our decomposition
\[
H_{R}=D_{1}\oplus\cdots\oplus D_{r}%
\]
of minimal $H$-adjoint-stable subcoalgebras of $H_{R}$ yields a decomposition
\[
_{H}^{H_{R}}\mathcal{M}=\oplus{}_{i=1}^{r}{}_{H}^{D_{i}}\mathcal{M}%
\]
of indecomposable module categories $_{H}^{D_{i}}\mathcal{M}$ over the tensor
category {}$_{H^{cop}}\mathcal{M}$.

Let us recall a theorem of Ostrik~\cite[Theorem 3.1]{Ostrik2003module}. Let
$\mathcal{C}$ be a semisimple rigid monoidal category with finitely many
irreducible objects and an irreducible unit object. Let $\mathcal{M}$ be a
$k$-linear abelian semisimple module category over $\mathcal{C}$. If $M$ is a
generator of $\mathcal{M}$, then $A=\underline{\operatorname{Hom}}\left(
M,M\right)  $ is a semisimple algebra in {}$\mathcal{C}$, and the functor
$F=\underline{\operatorname{Hom}}\left(  M,\bullet\right)  :\mathcal{M}%
\rightarrow\mathrm{Mod}_{\mathcal{C}}(A)$ is a module category equivalence. If
$\mathcal{M}$ is indecomposable, then every nonzero object generates
$\mathcal{M}$.

If $\left(  X,q\right)  $ be a right $A$-module in $\mathcal{C}$, and $\left(
M,ev_{M,M}\right)  $ be the natural left $A$-module in $\mathcal{M}$, we can
define a similar tensor product $X\otimes_{A}M$ as in \cite{Ostrik2003module}.
Then we prove in Theorem \ref{theorem-G-is-qinv} that
\[
G=\bullet\otimes_{A}M:\mathrm{Mod}_{\mathcal{C}}(A)\rightarrow\mathcal{M}%
\]
is a quasi-inverse of $F=\underline{\operatorname{Hom}}\left(  M,\bullet
\right)  $. Hence $V$ is an irreducible object of $\mathcal{M}$, if and only
if there exists an irreducible object $U$ in $\mathrm{Mod}_{\mathcal{C}}(A)$,
such that $V\cong U\otimes_{A}M$.

In Section \ref{section-YDmodule}, we study the structure of irreducible
objects of%
\[
_{H}^{H}\mathcal{YD}={}_{H}^{H_{R}}\mathcal{M}=\oplus{}_{i=1}^{r}{}_{H}%
^{D_{i}}\mathcal{M}.
\]
We have two ways to view $_{H}^{D_{i}}\mathcal{M}$ as a module category over
{}$_{H^{cop}}\mathcal{M}$, or as a module category over {}$\mathrm{Vec}_{k}$.

Take for each $1\leq i\leq r$ a finite dimensional nonzero left $D_{i}%
$-comodule $W_{i}$.

Let $\mathcal{C}={}_{H^{cop}}\mathcal{M}$. Then $M_{i}=H\otimes W_{i}$ is a
generator of the indecomposable module category $_{H}^{D_{i}}\mathcal{M}$ over
{}$_{H^{cop}}\mathcal{M}$. Then Ostrik's theorem and our
Theorem~\ref{theorem-G-is-qinv} applies. We have%
\[
F_{i}=\underline{\operatorname{Hom}}\left(  M_{i},\bullet\right)  :{}%
_{H}^{D_{i}}\mathcal{M}\rightarrow\mathrm{Mod}_{\mathcal{C}}(A_{i}),%
\]
where $A_{i}=\underline{\operatorname{Hom}}\left(  M_{i},M_{i}\right)
\cong\operatorname{Hom}^{D}\left(  H\otimes W_{i},H\otimes W_{i}\right)  $, is
a category equivalence with quasi-inverse
\[
G_{i}=\bullet\otimes_{A_{i}}M_{i}:\mathrm{Mod}_{\mathcal{C}}(A_{i}%
)\rightarrow{}_{H}^{D_{i}}\mathcal{M}\text{.}%
\]

Therefore an object $V\in{}_{H}^{H}\mathcal{YD}={}_{H}^{H_{R}}\mathcal{M}%
=\oplus{}_{i=1}^{r}{}_{H}^{D_{i}}\mathcal{M}$ is irreducible, if and only if
it belongs to one of the $_{H}^{D_{i}}\mathcal{M}$ for certain $1\leq i\leq
r$, and there exists an irreducible object $U$ of $\mathrm{Mod}_{\mathcal{C}%
}(A_{i})$, such that $U\otimes_{A_{i}}M_{i}\cong V$ as objects in {}%
$_{H}^{D_{i}}\mathcal{M}$. This gives a characterization of irreducible
modules in ${}_{H}^{H}\mathcal{YD}$.

This result has some distance from the classical result for the category
$_{kG}^{kG}\mathcal{YD}$, since $A_{1},\ldots,A_{r}$ are algebras in
$_{kG}\mathcal{M}$; whereas in \cite{dijkgraaf1992quasi,gould1993quantum} an
irreducible module of $_{kG}^{kG}\mathcal{YD}$ can be characterized by an
irreducible module of the group algebra $kC\left(  g\right)  $ of the
centralizer $C\left(  g\right)  $ of an element $g\in G$, with that $kC\left(
g\right)  $ need not to be a left $kG$-module.

If $\mathcal{C}=\mathrm{Vec}_{k}$, then $M_{i}=H\otimes W_{i}$ is still a
generator of the (possibly decomposable) module category $_{H}^{D_{i}%
}\mathcal{M}$ over $\mathcal{C}$. In this setting we can prove that
\[
A_{i}=\underline{\operatorname{Hom}}\left(  M_{i},M_{i}\right)
=\operatorname{Hom}_{H}^{D_{i}}\left(  H\otimes W_{i},H\otimes W_{i}\right)
\cong W_{i}^{\ast}\square_{D_{i}}\left(  H\otimes W_{i}\right)  \text{,}%
\]
is an ordinary algebra. We call $N_{W_{i}}=W_{i}^{\ast}\square_{D_{i}}\left(
H\otimes W_{i}\right)  $ the $R$-adjoint-stable algebra $N_{W_{i}}$ of $W_{i}%
$. Then $V$ is an irreducible object in ${}_{H}^{H}\mathcal{YD}$ if and only
if there exists some $1\leq i\leq r$, such that $V\cong U\otimes_{N_{W_{i}}%
}\left(  H\otimes W_{i}\right)  $ for an irreducible right $N_{W_{i}}$-module
$U$.

As an application of the structure theorem, we get that $N_{W_{i}}$ is an
$H$-simple comodule algebra. By the work of Skryabin
\cite{skryabin2007projectivity} if we assume further that the field $k$ is
algebraically closed, we obtain
\[
(\dim U\dim W_{i})\mid\dim N_{W_{i}},
\]
where $U$ is any irreducible right $N_{W_{i}}$-module. Therefore, $N_{W_{i}}$
is such an algebra over which the dimension of each irreducible left module
divides its dimension.

Back to the finite group algebra, if $H=kG,$ then the minimal decomposition in
${}_{H}^{H}\mathcal{YD}$ becomes $kG=k\mathcal{C}_{1}\oplus\cdots\oplus
k\mathcal{C}_{r}$ where $\left\{  \mathcal{C}_{i}\mid1\leq i\leq r\right\}  $
are conjugacy classes of $G$. Let $W_{i}=kg_{i}$, where $g_{i}\in
\mathcal{C}_{i}$. Then $kC\left(  g_{i}\right)  \cong(N_{W_{i}})^{op}$, and
the main results in this paper generalize the structure theorems appeared in
\cite{dijkgraaf1992quasi,gould1993quantum,Andruskiewitsch1998Braided}.

This paper is organized as follows. In section~\ref{section-prelimaries} we
recall some preliminaries. In section \ref{Section-struct-HR}, we recall
Majid's construction of the transmuted braided group $H_{R}$, and then present
some more properties for $H_{R}$. In Section \ref{section-mod-cat-general}, we
study the module category $_{H}^{H}\mathcal{YD}={}_{H}^{H_{R}}\mathcal{M}$ by
decomposing $_{H}^{H_{R}}\mathcal{M}$ into a sum of module subcategories%
\[
_{H}^{H_{R}}\mathcal{M}=\oplus{}_{i=1}^{r}{}_{H}^{D_{i}}\mathcal{M}\text{,}%
\]
where
\[
H_{R}=D_{1}\oplus\cdots\oplus D_{r}%
\]
is the sum of minimal $H$-adjoint-stable subcoalgebras of $H_{R}$. We give the
structure of the irreducible object $V$ in $_{H}^{D_{i}}\mathcal{M}$ in
Section \ref{section-YDmodule}. We also study the case when $D_{i}$ contains a
grouplike element, and apply our main results to the Kac-Paljutkin
$8$-dimensional semisimple Hopf algebra $H_{8}$.

\section{Preliminaries}

\label{section-prelimaries}

Throughout this paper, $k$ is a field, and all vector spaces are over $k$.
$\operatorname{Hom}$ and $\otimes$ for vector spaces are taken over $k$ if not
specified. For a subset $X$ of a vector space, we use $\mathrm{span}X$ to
denote the linear subspace spanned by $X$. If $A$ is an algebra, the notation
$_{A}\mathcal{M}$ (resp. $\mathcal{M}_{A}$) denotes the category of left
(resp. right) $A$-modules. $H$ will always denote a Hopf algebra over $k$ with
comultiplication $\Delta$, counit $\varepsilon,$ and antipode $S.$ If $S$ is
bijective, we write $\bar{S}$ for its composite inverse. We will use the
Sweedler's sigma notation \cite{MR0252485} for coproduct and coaction:
$\Delta\left(  h\right)  =\sum h_{\left(  1\right)  }\otimes h_{\left(
2\right)  }$ for coalgebras and $\rho\left(  m\right)  =\sum m_{\left\langle
0\right\rangle }\otimes m_{\left\langle 1\right\rangle }$ for right comodules
(or $\rho^{\prime}\left(  m\right)  =\sum m_{\left\langle -1\right\rangle
}\otimes m_{\left\langle 0\right\rangle }$ for left comodules). Our references
for Hopf algebras are \cite{MR0252485,Montgomery1993Hopf}.

Recall that a left-left Yetter-Drinfeld $H$-module $V$ is both a left
$H$-module and a left $H$-comodule, satisfying the condition%
\begin{equation}
\sum\left(  hv\right)  _{\langle-1\rangle}\otimes\left(  hv\right)
_{\langle0\rangle}=\sum h_{(1)}v_{\langle-1\rangle}\left(  Sh_{(3)}\right)
\otimes h_{(2)}v_{\langle0\rangle}, \label{YDcompati}%
\end{equation}
for all $h\in H,\ v\in V$. The category of left-left Yetter-Drinfeld
$H$-modules is denoted by ${}_{H}^{H}\mathcal{YD}$. Similarly, there is also a
notion of the left-right Yetter-Drinfeld module category ${}_{H}%
\mathcal{YD}^{H}$.

A pair $(H,R)$ is called a \textit{quasi-triangular} Hopf algebra
(\cite[Section 10]{drinfeld1986quantum}) if $R=\sum R^{1}\otimes R^{2}\in
H\otimes H $ is an invertible element which satisfies
\begin{align}
&  \sum R^{1}h_{(1)}\otimes R^{2}h_{(2)}=\sum h_{(2)}R^{1}\otimes h_{(1)}%
R^{2},~\forall~h\in H,\label{QT1}\\
&  (\Delta\otimes id_{H})(R)=\sum{R_{1}}^{1}\otimes{R_{2}}^{1}\otimes{R_{1}%
}^{2}{R_{2}}^{2},\label{QT2}\\
&  (id_{H}\otimes\Delta)(R)=\sum{R_{1}}^{1}{R_{2}}^{1}\otimes{R_{2}}%
^{2}\otimes{R_{1}}^{2}, \label{QT3}%
\end{align}
where $R_{i}=R=\sum{R_{i}}^{1}\otimes{R_{i}}^{2},\allowbreak\ \forall
~i\in\mathbb{N}^{+}$. Such an element $R$ is called an R-matrix of $H$.

If $\left(  H,R\right)  $ is quasi-triangular, then the following properties
hold (cf. Drinfeld~\cite{drinfeld1990almost}):

\begin{enumerate}
\item The R-matrix $R$ is a solution of the quantum Yang-Baxter equation%
\begin{equation}
R^{12}R^{13}R^{23}=R^{23}R^{13}R^{12}, \label{QYBE}%
\end{equation}
where $R^{12}=\sum R^{1}\otimes R^{2}\otimes1_{H},$ $R^{23}=\sum1_{H}\otimes
R^{1}\otimes R^{2},$ and $R^{13}=\sum R^{1}\otimes1_{H}\otimes R^{2}$.

\item $\left(  S\otimes id_{H}\right)  \left(  R\right)  =R^{-1}=\left(
id_{H}\otimes\bar{S}\right)  \left(  R\right)  $, and $\left(  S\otimes
S\right)  \left(  R\right)  =R$.

\item $\left(  \varepsilon\otimes id_{H}\right)  \left(  R\right)
=1_{H}=\left(  id_{H}\otimes\varepsilon\right)  \left(  R\right)  $.
\end{enumerate}

From now on, we assume that $H$ is a quasi-triangular Hopf algebra with
R-matrix $R$, and consider the left-left Yetter-Drinfeld modules, unless
otherwise stated. We use $R_{i}=R\ (i=1,2,\ldots)$, when more than one $R$ are used.

For $\left(  H,R\right)  ,$ Majid defined a transmuted braided group $H_{R}$,
which is a cocommutative Hopf algebra in the braided tensor category
$_{H}\mathcal{M}$. For details, one can refer to \cite{Majid1991Braided}. We
will use the explicit coalgebra expression of $H_{R}$, to yield a construction
of irreducible Yetter-Drinfeld modules.

Write the left adjoint action of $H$ on itself by $\cdot_{ad}$, namely,
$h\cdot_{ad}a=\sum h_{(1)}a\left(  Sh_{(2)}\right)  $, $\forall~h,\ a\in H$.

\begin{defn}
[\cite{Majid1991Braided}]Let $\left(  H,R\right)  $ be a quasi-triangular Hopf
algebra. Then there is a Hopf algebra $H_{R}$ in the braided tensor category
$_{H}\mathcal{M}$, defined as follows. $H_{R}=H$ as an algebra, and the
$H$-module action is $\cdot_{ad}$. The comultiplication and antipode is
defined by%
\begin{align}
\Delta_{R}(h)  &  =\sum h_{(1)}\left(  SR^{2}\right)  \otimes R^{1}\cdot
_{ad}h_{(2)},\label{DefDeltaR}\\
S_{R}\left(  h\right)   &  =\sum R^{2}S\left(  R^{1}\cdot_{ad}h\right)  ,
\end{align}
for all $h\in H$. This $H_{R}$ is called the transmuted braided group of
$\left(  H,R\right)  .$
\end{defn}

To avoid confusion, we write $\Delta_{R}(h)=\sum h^{(1)}\otimes h^{(2)}$. Note
that $\Delta_{R}$ is a morphism in $_{H}\mathcal{M},$ that is, for all $h,a\in
H,$%
\begin{equation}
\Delta_{R}\left(  h\cdot_{ad}a\right)  =\sum h_{(1)}\cdot_{ad}a^{(1)}\otimes
h_{(2)}\cdot_{ad}a^{(2)}. \label{eqc}%
\end{equation}

\begin{lem}
[{\cite[Lemma 4.7]{Zhu2015Relative}}]\label{lemHHYDeqHRHM}

\begin{enumerate}
\item Let $V$ be an object of ${}_{H}^{H}\mathcal{YD},$ then the map $\rho
_{R}:V\rightarrow H_{R}\otimes V$ given by
\begin{equation}
\rho_{R}(v)=\sum v_{\left\langle -1\right\rangle }\left(  SR^{2}\right)
\otimes R^{1}v_{\left\langle 0\right\rangle },\ v\in V \label{eqa}%
\end{equation}
is a left $H_{R}$-comodule map.

\item If $V$ is a left $H$-module, and at the same time $\left(  V,\rho
_{R}\right)  $ is a left $H_{R}$-comodule with $\rho_{R}\left(  v\right)
=\sum v^{\langle-1\rangle}\otimes v^{\langle0\rangle}$ for $v\in V$. Define
$\rho\left(  v\right)  =\sum v^{\left\langle -1\right\rangle }R^{2}\otimes
R^{1}v^{\left\langle 0\right\rangle }\in H\otimes V$ for $v\in V$, then
$V\in{}_{H}^{H}\mathcal{YD}$ if and only if for any $v\in V$ and $h\in H$,
\begin{equation}
\rho_{R}(hv)=\sum h_{(1)}\cdot_{ad}v^{\langle-1\rangle}\otimes h_{(2)}%
v^{\langle0\rangle}. \label{eqb}%
\end{equation}

\end{enumerate}

The category $_{H}^{H}\mathcal{YD}$ and the category $_{H}^{H_{R}}\mathcal{M}
$ of $H_{R}$-comodules in $_{H}\mathcal{M}$ are identical.
\end{lem}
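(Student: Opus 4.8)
The plan is to verify that the two constructions in parts (1) and (2) of Lemma \ref{lemHHYDeqHRHM} are mutually inverse, and that they carry morphisms to morphisms, so that the categories $_{H}^{H}\mathcal{YD}$ and $_{H}^{H_{R}}\mathcal{M}$ are not merely equivalent but literally the same category. First I would fix notation: an object of $_{H}^{H_{R}}\mathcal{M}$ is a left $H$-module $V$ together with a left $H_{R}$-comodule structure $\rho_R\colon V\to H_R\otimes V$ which is a morphism in $_{H}\mathcal{M}$, i.e. satisfies \eqref{eqb}; we must show these data are in bijection with left-left Yetter-Drinfeld structures on the same underlying $H$-module, via the mutually inverse formulas $\rho_R(v)=\sum v_{\langle-1\rangle}(SR^2)\otimes R^1 v_{\langle0\rangle}$ and $\rho(v)=\sum v^{\langle-1\rangle}R^2\otimes R^1 v^{\langle0\rangle}$.

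The key steps, in order, are as follows. (i) Show the two formulas relating $\rho$ and $\rho_R$ are inverse to each other as linear maps $V\to H\otimes V$; this is a direct computation using $(S\otimes\mathrm{id})(R)=R^{-1}$ from property 2 of quasi-triangularity, applied to $R=R_1$ and a second copy $R_2$, together with the fact that $SR_2^2\,R_1^2\otimes R_1^1 R_2^1$ telescopes: $\sum v_{\langle -1\rangle}(SR_2^2)R_3^2\otimes R_3^1 R_2^1 v_{\langle 0\rangle}$ collapses because $\sum (SR_2^2)R_3^2\otimes R_3^1 R_2^1$ is the image of $R^{-1}R=1\otimes1$ under an appropriate identification. (ii) Granting part (1) of the lemma (already stated, hence available), if $V\in{}_{H}^{H}\mathcal{YD}$ then $\rho_R$ defined by \eqref{eqa} is a left $H_R$-comodule map, and the compatibility \eqref{YDcompati} translates precisely into \eqref{eqb}; conversely part (2) says that given an $H$-module with an $H_R$-comodule structure satisfying \eqref{eqb}, the reconstructed $\rho$ makes $V$ a Yetter-Drinfeld module. (iii) Check that a $k$-linear map $f\colon V\to V'$ is $H$-linear and $H$-colinear for the original comodule structures $\rho,\rho'$ if and only if it is $H$-linear and $H_R$-colinear for $\rho_R,\rho_R'$; since $f$ being $H$-linear is common to both descriptions, this reduces to the elementary observation that $(\mathrm{id}\otimes f)\rho=\rho' f$ is equivalent to $(\mathrm{id}\otimes f)\rho_R=\rho_R' f$, which is immediate once one substitutes the defining formulas and uses $H$-linearity of $f$ to pull the factors $SR^2$ and $R^1$ through. (iv) Observe that the underlying objects, the morphism sets, and (if one cares) the braidings/monoidal structures agree, so the identity on underlying $H$-modules is an isomorphism — indeed equality — of categories.

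The main obstacle is step (i) together with the forward direction of step (ii): checking that \eqref{eqa} genuinely lands in an $H_R$-comodule and that \eqref{YDcompati} $\Leftrightarrow$ \eqref{eqb}. These amount to bookkeeping with several independent copies $R_1,R_2,R_3,\dots$ of the $R$-matrix, repeatedly invoking the hexagon axioms \eqref{QT2}, \eqref{QT3}, the relation \eqref{QT1}, and the antipode identities, while simultaneously unwinding the twisted coproduct $\Delta_R$ from \eqref{DefDeltaR} and the fact \eqref{eqc} that $\Delta_R$ is a morphism in $_{H}\mathcal{M}$. The coassociativity of $\rho_R$ in particular forces one to expand $(\Delta_R\otimes\mathrm{id})\rho_R$ and $(\mathrm{id}\otimes\rho_R)\rho_R$ and match them using the quantum Yang--Baxter equation \eqref{QYBE}; this is the computational heart. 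Since the statement is attributed to \cite[Lemma 4.7]{Zhu2015Relative}, I would organize the write-up to quote parts (1) and (2) as given and then spend the bulk of the argument on steps (i), (iii) and (iv) — i.e. that the correspondence is bijective on objects and compatible with morphisms — which is what upgrades the equivalence of \cite{Zhu2015Relative} to an equality of categories.
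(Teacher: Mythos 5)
The paper does not actually prove this statement: the entire lemma, including the final sentence identifying ${}_{H}^{H}\mathcal{YD}$ with ${}_{H}^{H_{R}}\mathcal{M}$, is imported verbatim from \cite[Lemma 4.7]{Zhu2015Relative}, so there is no in-paper proof to compare yours against. Judged on its own terms, your outline has the right shape: the correspondence $\rho\leftrightarrow\rho_{R}$, the translation of \eqref{YDcompati} into \eqref{eqb}, and the observation that a map is $H$-colinear for $\rho$ iff it is $H_{R}$-colinear for $\rho_{R}$ (which is immediate since $\rho_{R}=T\circ\rho$ with $T(h\otimes v)=\sum h(SR^{2})\otimes R^{1}v$, and $T$ commutes with $id_{H}\otimes f$ for $H$-linear $f$) are exactly the ingredients needed to upgrade "equivalent" to "identical." You are also right that the computational weight sits in the coassociativity of $\rho_{R}$ and the equivalence of the two compatibility conditions, and deferring those to the cited reference is consistent with what the paper itself does.

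One concrete imprecision in step (i): the two compositions require two \emph{different} scalar identities, and only one of them is the naive "$R^{-1}R=1$" collapse. The composition $\rho_{R}\mapsto\rho\mapsto\rho_{R}$ produces the factor $\sum R_{1}^{2}\left(SR_{2}^{2}\right)\otimes R_{2}^{1}R_{1}^{1}$, which is $\tau$ of $\sum R_{2}^{1}R_{1}^{1}\otimes R_{1}^{2}\left(SR_{2}^{2}\right)=1\otimes 1$ (the identity the paper itself uses in the proof of Proposition \ref{propd}, obtained from $\left(S\otimes S\right)\left(R\right)=R$ together with $R^{-1}=\left(S\otimes id\right)\left(R\right)$). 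The other composition produces $\sum\left(SR_{1}^{2}\right)R_{2}^{2}\otimes R_{2}^{1}R_{1}^{1}$, with the antipode and the multiplication order switched in the first tensorand; this is not literally an instance of $R^{-1}R$, and the cleanest way to dispose of it is to note that the first composition shows $T\circ T'=id$ on $H\otimes V$, whence $T'\circ T=id$ by finite dimensionality (which the paper assumes). As written, your "telescoping" claim glosses over this asymmetry, and a careful write-up should either prove the second identity directly or invoke the finite-dimensionality argument.
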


Since the transmuted braided group $H_{R}$ is a coalgebra in $_{H}\mathcal{M}%
$, the category ${}_{H}^{H}\mathcal{YD}={}_{H}^{H_{R}}\mathcal{M}$ can be
viewed as a module category over the monoidal category {}$_{H^{cop}%
}\mathcal{M}$. It will be explained in detail in Section
\ref{section-mod-cat-general}.

The general theory of module categories over a monoidal category was developed
by Ostrik. For references, one can see
\cite{Ostrik2003module,Etingof2015tensor}. Later on, all categories we
considered will be at least $k$-linear abelian.

Assume that $\mathcal{C}$ is a fusion category, and $\left(  \mathcal{M}%
,\otimes,a\right)  $ is a semisimple module category over $\mathcal{C}$, with
$a$ being the module associativity functorial isomorphism. For two objects
$M_{1},M_{2}$ of $\mathcal{M}$, the \textit{internal} Hom of $M_{1}$ and
$M_{2}$ is an object $\underline{\operatorname{Hom}}\left(  M_{1}%
,M_{2}\right)  $ of $\mathcal{C}$ representing the functor $X\mapsto
\operatorname{Hom}_{\mathcal{M}}\left(  X\otimes M_{1},M_{2}\right)
:\mathcal{C}\rightarrow\mathrm{Vec}_{k}$. It implies that there exists a
natural isomorphism%

\begin{equation}
\eta_{\bullet,M_{1},M_{2}}:\operatorname{Hom}_{\mathcal{M}}\left(
\bullet\otimes M_{1},M_{2}\right)  \overset{\cong}{\longrightarrow
}\operatorname{Hom}_{\mathcal{C}}\left(  \bullet,\underline{\operatorname{Hom}%
}\left(  M_{1},M_{2}\right)  \right)  . \label{def-in-hom}%
\end{equation}
Let $ev_{M_{1},M_{2}}:\underline{\operatorname{Hom}}\left(  M_{1}%
,M_{2}\right)  \otimes M_{1}\rightarrow M_{2}$ be the evaluation morphism
obtained as the image of the $id_{\underline{\operatorname{Hom}}\left(
M_{1},M_{2}\right)  }$ under the isomorphism
\[
\operatorname{Hom}_{\mathcal{C}}\left(  \underline{\operatorname{Hom}}\left(
M_{1},M_{2}\right)  ,\underline{\operatorname{Hom}}\left(  M_{1},M_{2}\right)
\right)  \overset{\cong}{\longrightarrow}\operatorname{Hom}_{\mathcal{M}%
}\left(  \underline{\operatorname{Hom}}\left(  M_{1},M_{2}\right)  \otimes
M_{1},M_{2}\right)  .
\]
If $M_{1},M_{2},M_{3}$ are objects of $\mathcal{M}$, then the multiplication
(composition) of internal Hom
\[
\mu_{M_{1},M_{2},M_{3}}:\underline{\operatorname{Hom}}\left(  M_{2}%
,M_{3}\right)  \otimes\underline{\operatorname{Hom}}\left(  M_{1}%
,M_{2}\right)  \rightarrow\underline{\operatorname{Hom}}\left(  M_{1}%
,M_{3}\right)
\]
is defined as the image of the morphism%

\[
ev_{M_{2},M_{3}}\circ\left(  id\otimes ev_{M_{1},M_{2}}\right)  \circ
a_{\underline{\operatorname{Hom}}\left(  M_{2},M_{3}\right)  ,\underline
{\operatorname{Hom}}\left(  M_{1},M_{2}\right)  ,M_{1}}%
\]
under the isomorphism
\begin{align*}
&  \operatorname{Hom}_{\mathcal{M}}\left(  \left(  \underline
{\operatorname{Hom}}\left(  M_{2},M_{3}\right)  \otimes\underline
{\operatorname{Hom}}\left(  M_{1},M_{2}\right)  \right)  \otimes M_{1}%
,M_{3}\right) \\
&  \cong\operatorname{Hom}_{\mathcal{C}}\left(  \underline{\operatorname{Hom}%
}\left(  M_{2},M_{3}\right)  \otimes\underline{\operatorname{Hom}}\left(
M_{1},M_{2}\right)  ,\underline{\operatorname{Hom}}\left(  M_{1},M_{3}\right)
\right)  .
\end{align*}
Then for any $M,V\in\mathcal{M}$, the internal Hom $\left(  \underline
{\operatorname{Hom}}\left(  M,M\right)  ,\mu_{M,M,M}\right)  =A$ is an algebra
in $\mathcal{C}$ with unit $u_{M}:1\rightarrow\underline{\operatorname{Hom}%
}\left(  M,M\right)  $ obtained from isomorphism $\operatorname{Hom}%
_{\mathcal{M}}\left(  M,M\right)  $ $\overset{\cong}{\longrightarrow}$
$\operatorname{Hom}_{\mathcal{C}}\left(  1,\underline{\operatorname{Hom}%
}\left(  M,M\right)  \right)  $ as the image of $id$; and $\left(
\underline{\operatorname{Hom}}\left(  M,V\right)  ,\mu_{M,M,V}\right)  $ is a
right $A$-module in $\mathcal{C}$.

Ostrik characterized in \cite[Theorem 3.1]{Ostrik2003module} (especially
indecomposable) module categories over $\mathcal{C}$ in the following theorem.

\begin{lem}
[{cf. \cite[Theorem 3.1]{Ostrik2003module},\cite[Theorem 7.10.1]%
{Etingof2015tensor}}]\label{lemOstrik}Let $\mathcal{M}$ be a semisimple
category over a fusion category $\mathcal{C}$. If $M\in\mathcal{M}$ is a
generator, then $A=\underline{\operatorname{Hom}}\left(  M,M\right)  $ is a
semisimple algebra in $\mathcal{C}$. The functor $F=\underline
{\operatorname{Hom}}\left(  M,\bullet\right)  :\mathcal{M}\rightarrow
\mathrm{Mod}_{\mathcal{C}}(A)$ given by $V\mapsto\underline{\operatorname{Hom}%
}\left(  M,V\right)  $ is an equivalence of $\mathcal{C}$-module categories.

If further that $\mathcal{M}$ is indecomposable, then every nonzero object $M$
generates $\mathcal{M}$, and the functor $F=\underline{\operatorname{Hom}%
}\left(  M,\bullet\right)  :\mathcal{M}\rightarrow\mathrm{Mod}_{\mathcal{C}%
}(A)$ given by $V\mapsto\underline{\operatorname{Hom}}\left(  M,V\right)  $ is
an equivalence of $\mathcal{C}$-module categories.
\end{lem}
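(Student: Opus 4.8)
The plan is to produce an explicit quasi-inverse to $F$ and to deduce semisimplicity of $A$ afterwards. First I would record the structural inputs. Since $\mathcal M$ is a module category over the fusion category $\mathcal C$, the internal Hom obeys a natural, associativity-compatible isomorphism $\underline{\operatorname{Hom}}(M,X\otimes V)\cong X\otimes\underline{\operatorname{Hom}}(M,V)$, so $F=\underline{\operatorname{Hom}}(M,\bullet)$ is a $k$-linear $\mathcal C$-module functor into $\mathrm{Mod}_{\mathcal C}(A)$, where $F(V)$ carries the right $A$-action $\mu_{M,M,V}$. In the semisimple setting $F$ is exact; because $M$ is a generator, $F$ is faithful ($\underline{\operatorname{Hom}}(M,V)=0\Rightarrow V=0$) and for every $N$ the evaluation $ev_{M,N}\colon\underline{\operatorname{Hom}}(M,N)\otimes M\to N$ is epi, hence split. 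Thus every object of $\mathcal M$ is a direct summand of some $X\otimes M$ with $X\in\mathcal C$; dually, every right $A$-module $(X,q)$ is a split summand (via $\mathrm{id}_X\otimes u_A$) of the free module $X\otimes A$; and one has $F(X\otimes M)\cong X\otimes A$ as $A$-modules. These reductions make the remaining verifications routine.

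Next I would define the candidate quasi-inverse $G=\bullet\otimes_A M\colon\mathrm{Mod}_{\mathcal C}(A)\to\mathcal M$, where for a right $A$-module $(X,q)$ the object $X\otimes_A M$ is the coequalizer of $q\otimes\mathrm{id}_M$ and $(\mathrm{id}_X\otimes ev_{M,M})\circ a_{X,A,M}$ on $X\otimes A\otimes M\rightrightarrows X\otimes M$; this coequalizer exists and $G$ is right exact because $\mathcal M$ is semisimple. It remains to check that the counit $c_V\colon\underline{\operatorname{Hom}}(M,V)\otimes_A M\to V$ induced by $ev_{M,V}$ and the unit $d_X\colon X\to\underline{\operatorname{Hom}}(M,X\otimes_A M)$ are isomorphisms. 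For $d$: when $X=Y\otimes A$ is free, $G(Y\otimes A)\cong Y\otimes M$ and $F(Y\otimes M)\cong Y\otimes A$ via the free--forgetful adjunction $\operatorname{Hom}_{\mathcal C}(Y,\underline{\operatorname{Hom}}(M,W))\cong\operatorname{Hom}_{\mathcal M}(Y\otimes M,W)$, under which $d_{Y\otimes A}$ becomes the identity; a general $X$ is a direct summand of a free module and $d$ is natural, so $d_X$ is iso. For $c$: a general $V$ is a direct summand of some $X\otimes M$, and $c_{X\otimes M}$ becomes the identity under the same adjunction, so $c_V$ is iso. Hence $F$ and $G$ are mutually quasi-inverse $\mathcal C$-module equivalences.

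Since $F$ identifies $\mathrm{Mod}_{\mathcal C}(A)$ with the semisimple category $\mathcal M$, the category of $A$-modules in $\mathcal C$ is semisimple; as $\mathcal C$ is itself semisimple this forces $A$ to be a semisimple (separable) algebra in $\mathcal C$ --- concretely, a separability idempotent can be extracted from the splittings of the evaluation/unit maps above. For the indecomposable case, put an equivalence relation on the simple objects of $\mathcal M$ by declaring $N\sim N'$ whenever $N'$ is a constituent of $X\otimes N$ for some $X\in\mathcal C$ (symmetry uses rigidity of $\mathcal C$). Each class spans a $\mathcal C$-module subcategory and $\mathcal M$ is their direct sum, so indecomposability means there is a single class; then for any $0\neq M$ and any simple $N$ one has $\underline{\operatorname{Hom}}(M,N)\neq 0$ (it is nonzero iff $N$ is a constituent of $X\otimes M'$ for some $X$ and some simple summand $M'$ of $M$), i.e. $M$ is a generator, and the first part applies.

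The step I expect to be the main obstacle is the counit isomorphism $\underline{\operatorname{Hom}}(M,V)\otimes_A M\xrightarrow{\ \sim\ }V$: one must verify that the relative tensor product over $A$ --- a coequalizer computed in $\mathcal M$ --- interacts correctly with the $\mathcal C$-action and with the evaluation morphisms, and that reducing to objects of the form $X\otimes M$ is legitimate; confirming that the comparison maps are literally the identity under the free--forgetful adjunction, rather than merely abstractly invertible, is the delicate bookkeeping here.
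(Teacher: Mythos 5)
The paper does not prove this lemma at all: it is quoted verbatim from Ostrik \cite[Theorem 3.1]{Ostrik2003module} and \cite[Theorem 7.10.1]{Etingof2015tensor} and used as a black box, so there is no in-paper argument to compare against. Your proposal reconstructs the standard proof from those references, and its overall architecture is sound: the identity $\underline{\operatorname{Hom}}(M,X\otimes V)\cong X\otimes\underline{\operatorname{Hom}}(M,V)$ (which needs rigidity of $\mathcal{C}$), the split surjectivity of $ev_{M,N}$ for a generator $M$, the reduction of objects of $\mathcal{M}$ to summands of $X\otimes M$, the computation $G(Y\otimes A)\cong Y\otimes M$ and $F(X\otimes M)\cong X\otimes A$, and the equivalence-relation argument for the indecomposable case are all exactly the steps in the cited proofs. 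Note also that the quasi-inverse $G=\bullet\otimes_A M$ you construct is precisely what the paper establishes separately as Theorem \ref{theorem-G-is-qinv}, via the adjunction of Proposition \ref{propiHom&otA}; the paper deliberately splits the statement that way because it takes Lemma \ref{lemOstrik} as given and only supplies the quasi-inverse itself.

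One step is misstated and, as written, breaks your reduction for the unit map $d_X$. You claim every right $A$-module $(X,q)$ is a split summand of the free module $X\otimes A$ \emph{via} $\mathrm{id}_X\otimes u_A$. That map is a split monomorphism in $\mathcal{C}$ (split by $q$), but it is not a morphism of right $A$-modules: composing it with the free action gives $x\otimes a$, while acting first and then including gives $q(x\otimes a)\otimes 1$. Since the retract argument for $d_X$ must take place in $\mathrm{Mod}_{\mathcal{C}}(A)$, you cannot invoke naturality of $d$ along a non-module map. The standard repairs are either (i) to use the bar presentation $X\otimes A\otimes A\rightrightarrows X\otimes A\to X\to 0$ in $\mathrm{Mod}_{\mathcal{C}}(A)$, together with right exactness of $G$ (it is a left adjoint, by the very adjunction you and the paper both use) and exactness of $F$, and then conclude by the five lemma from the free case; or (ii) to first establish that $A$ is separable, after which $q$ does split by a module map. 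With that correction the argument goes through and matches the literature proof.
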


\section{The Transmuted Braided Group}

\label{Section-struct-HR}

We start this section with an arbitrary quasi-triangular Hopf algebra $(H,R)$.
The cosemisimplicity of the transmuted braided group $\left(  H_{R},\Delta
_{R}\right)  $ will be discussed. If $H$ is cosemisimple and unimodular, we
prove that $H_{R}$ is cosemisimple as a $k$-coalgebra. Naturally, $\left(
H,\cdot_{ad},\Delta\right)  \in{}_{H}^{H}\mathcal{YD}$. By Lemma
\ref{lemHHYDeqHRHM}, $H$ is an object of $_{H}^{H_{R}}\mathcal{M}$ via the
$H_{R}$-coaction $\Delta_{R}$ and the $H$-action $\cdot_{ad}.$ If further we
assume that $H$ is semisimple and cosemisimple, then $_{H}^{H_{R}}\mathcal{M}$
is semisimple, and in $_{H}^{H_{R}}\mathcal{M}$ the object $H$ can be
decomposed as a direct sum of minimal (irreducible) sub-objects.

\begin{prop}
\label{PropSubmodofH}For any Yetter-Drinfeld submodule $D$ of $H$, $D$ is a
subcoalgebra of $H_{R}$. Moreover, $D$ is a minimal Yetter-Drinfeld submodule
of $H$ if and only if $D$ is a minimal $H$-adjoint-stable subcoalgebra of
$H_{R}$ with the left adjoint action $\cdot_{ad}$.
\end{prop}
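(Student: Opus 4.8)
The plan is to establish first the ``coalgebra'' half of the statement: if $D$ is a Yetter-Drinfeld submodule of $H$ (with the adjoint action and the regular coaction $\Delta$), then $D$ is a subcoalgebra of $H_R$, i.e. $\Delta_R(D)\subseteq D\otimes D$. For this I would unpack $\Delta_R(h)=\sum h_{(1)}(SR^2)\otimes R^1\cdot_{ad}h_{(2)}$ from \eqref{DefDeltaR}. Being a YD submodule means $D$ is stable under $\cdot_{ad}$ and a subcomodule for $\Delta$, hence $\Delta(D)\subseteq D\otimes D$; so in the expression $\sum h_{(1)}(SR^2)\otimes R^1\cdot_{ad}h_{(2)}$, the second tensorand $R^1\cdot_{ad}h_{(2)}$ already lies in $D$ since $h_{(2)}\in D$ (as $\Delta(h)\in D\otimes D$) and $D$ is $\cdot_{ad}$-stable. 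For the first tensorand I would use the alternative description coming from Lemma~\ref{lemHHYDeqHRHM}(1): the map $\rho_R(v)=\sum v_{\langle-1\rangle}(SR^2)\otimes R^1 v_{\langle 0\rangle}$ is precisely $\Delta_R$ when $V=H$ with its regular coaction, and it is asserted there to be a left $H_R$-comodule map; restricted to the subobject $D$ it maps into $H_R\otimes D$. Combining this with the symmetric computation (or invoking the YD compatibility to move the $SR^2$ factor) shows $\Delta_R(D)\subseteq D\otimes D$. Cleanest is to note $D$ is a YD submodule, so by Lemma~\ref{lemHHYDeqHRHM} it is an $H_R$-subcomodule of $H$ under $\rho_R=\Delta_R$, which literally says $\Delta_R(D)\subseteq H_R\otimes D$; a second application using cocommutativity of $H_R$ in $_H\mathcal{M}$ (or repeating the argument on the other leg) gives $\Delta_R(D)\subseteq D\otimes D$. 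Since $H_R=H$ as a vector space and $\varepsilon_R=\varepsilon$, counitality is automatic, so $D$ is a subcoalgebra of $H_R$, automatically $\cdot_{ad}$-stable.

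Next I would prove the converse inclusion of structures: if $D\subseteq H_R$ is an $H$-adjoint-stable subcoalgebra, then $D$ is a YD submodule of $H$. It is $H$-stable by hypothesis (the module structure is $\cdot_{ad}$ in both pictures). For the comodule structure I would invert the passage $\rho_R\leftrightarrow\rho$ described in Lemma~\ref{lemHHYDeqHRHM}(2): from the $H_R$-coaction $\Delta_R$ on $D$ one recovers the original coaction by $\rho(h)=\sum h^{\langle-1\rangle}R^2\otimes R^1 h^{\langle 0\rangle}$ where $\Delta_R(h)=\sum h^{\langle-1\rangle}\otimes h^{\langle 0\rangle}$; since $\Delta_R(D)\subseteq D\otimes D$ and $D$ is $\cdot_{ad}$-stable (hence stable under left multiplication composed with adjoint-type operations — more precisely, here one checks $\rho(D)\subseteq H\otimes D$ directly because the right leg $R^1 h^{\langle 0\rangle}$ involves $h^{\langle 0\rangle}\in D$ and left multiplication by $R^1$, which need not preserve $D$; instead I use that $D$ being an $H_R$-subcomodule of $H\in{}_H^{H_R}\mathcal{M}$ makes it an object of $_H^{H_R}\mathcal{M}$ by Lemma~\ref{lemHHYDeqHRHM}, i.e. a YD submodule). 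The efficient route is: Lemma~\ref{lemHHYDeqHRHM} gives a category \emph{identification} $_H^H\mathcal{YD}={}_H^{H_R}\mathcal{M}$, under which $(H,\cdot_{ad},\Delta)$ corresponds to $(H,\cdot_{ad},\Delta_R)=H_R$; subobjects correspond to subobjects, and a subobject of $H_R$ in $_H^{H_R}\mathcal{M}$ is exactly an $H$-submodule (for $\cdot_{ad}$) that is an $H_R$-subcomodule, equivalently (being inside the coalgebra $H_R$) an adjoint-stable subcoalgebra. This makes the equivalence of the two notions of ``submodule'' essentially a translation.

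Finally, the minimality statement follows formally once the two notions of submodule coincide: a category identification sends the lattice of subobjects of $H$ isomorphically onto the lattice of subobjects of $H_R$, so minimal (i.e. simple, irreducible) YD submodules of $H$ correspond precisely to minimal $H$-adjoint-stable subcoalgebras of $H_R$. I would state this as the last line. The main obstacle I anticipate is not conceptual but bookkeeping: making sure the correspondence $\rho\leftrightarrow\rho_R$ of Lemma~\ref{lemHHYDeqHRHM} genuinely restricts to subspaces in both directions, i.e. that ``$D$ a subcomodule for $\rho$'' and ``$D$ a subcomodule for $\rho_R$'' are equivalent given that $D$ is $\cdot_{ad}$-stable — this needs the invertibility of $R$ and the explicit formulas $\rho_R(v)=\sum v_{\langle-1\rangle}(SR^2)\otimes R^1 v_{\langle0\rangle}$ and $\rho(v)=\sum v^{\langle-1\rangle}R^2\otimes R^1 v^{\langle0\rangle}$, together with the fact that on the right-hand tensor leg these only apply operators that keep values in $D$ once one also uses $\cdot_{ad}$-stability appropriately. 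Once that is checked, everything else is a direct application of Lemma~\ref{lemHHYDeqHRHM} and the definition of $\Delta_R$.
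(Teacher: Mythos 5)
Your proposal is correct and follows the same skeleton as the paper's proof: use the identification ${}_{H}^{H}\mathcal{YD}={}_{H}^{H_R}\mathcal{M}$ of Lemma~\ref{lemHHYDeqHRHM} to get $\Delta_R(D)\subseteq H\otimes D$ and to handle the converse and the minimality statement by the correspondence of subobjects, then establish the second inclusion $\Delta_R(D)\subseteq D\otimes H$ so that $D$ is a subcoalgebra. The one place where the two arguments genuinely differ is that second inclusion: the paper carries out an explicit $R$-matrix computation (using $(S\otimes S)(R)=R$, axiom (\ref{QT2}) and axiom (\ref{QT1})) to rewrite $\Delta_R(h)=\sum R^{2}\cdot_{ad}h_{(2)}\otimes R^{1}h_{(1)}$, whose first tensorand visibly lies in $D$; you instead invoke the braided cocommutativity of $H_R$ in ${}_{H}\mathcal{M}$ (Majid's result, which the paper only quotes in the preliminaries), which gives $\Delta_R(h)=\sum R^{2}\cdot_{ad}h^{(2)}\otimes R^{1}\cdot_{ad}h^{(1)}$ and the same conclusion since $h^{(2)}\in D$ and $D$ is $\cdot_{ad}$-stable. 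Both are valid; the paper's route is self-contained while yours outsources the only nontrivial computation, so if you write it up you should either cite the cocommutativity precisely (with the braiding $a\otimes b\mapsto\sum R^{2}\cdot_{ad}b\otimes R^{1}\cdot_{ad}a$) or reproduce the five-line calculation. Two small points: being a Yetter--Drinfeld submodule gives only $\Delta(D)\subseteq H\otimes D$, not $\Delta(D)\subseteq D\otimes D$ as you assert (harmless here, since you only use $h_{(2)}\in D$); and your worry about the leg-by-leg argument for the converse is well placed --- the correct justification is exactly the morphism/subobject correspondence you settle on, not a direct inspection of the formula $\rho(v)=\sum v^{\langle-1\rangle}R^{2}\otimes R^{1}v^{\langle 0\rangle}$.
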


\begin{proof}
Since $\left(  D,\cdot_{ad},\Delta\right)  \in{}_{H}^{H}\mathcal{YD}={}%
_{H}^{H_{R}}\mathcal{M}$ , we obtain that $\Delta_{R}\left(  D\right)
\subseteq$ $H\otimes D$, $H\cdot_{ad}D\subseteq D$ and $\Delta\left(
D\right)  \subseteq H\otimes D$. By $\left(  S\otimes S\right)  \left(
R\right)  =R$, we have
\begin{align*}
\Delta_{R}(h)  &  =\sum h_{(1)}\left(  SR^{2}\right)  \otimes R^{1}\cdot
_{ad}h_{(2)}\\
&  =\sum h_{(1)}S\left(  {R_{1}}^{2}{R_{2}}^{2}\right)  \otimes{R_{1}}%
^{1}h_{(2)}\left(  S{R_{2}}^{1}\right) \\
&  =\sum h_{(1)}{R_{2}}^{2}\left(  S{R_{1}}^{2}\right)  \otimes{R_{1}}%
^{1}h_{(2)}{R_{2}}^{1}\\
&  =\sum{R_{2}}^{2}h_{(2)}\left(  S{R_{1}}^{2}\right)  \otimes{R_{1}}%
^{1}{R_{2}}^{1}h_{(1)}\\
&  =\sum{R}^{2}\cdot_{ad}h_{(2)}\otimes R^{1}h_{(1)}\in D\otimes H,
\end{align*}
for all $h\in D$. Therefore we also have $\Delta_{R}\left(  D\right)
\subseteq$ $D\otimes H,$ and so $D$ is a subcoalgebra of $H_{R}.$ Conversely,
by Lemma \ref{lemHHYDeqHRHM}, any $H$-adjoint-stable subcoalgebra of $H_{R}$
is a Yetter-Drinfeld submodule of $H$, and the proposition follows.
\end{proof}

Assume that $H$ is finite dimensional. Let $\Lambda$ be a left integral for
$H$, $\lambda$ a right integral for $H^{\ast}$, and $\mathbf{a}$, $\alpha$ the
distinguished grouplike elements of $H$ and $H^{\ast}$ respectively. It is
known from \cite[Theorem 3]{RADFORD1994583} that $\sum\Lambda_{(2)}%
\otimes\Lambda_{(1)}=\sum\Lambda_{(1)}\otimes\left(  S^{2}\Lambda
_{(2)}\right)  \mathbf{a}$. If $H^{\ast}$ is unimodular and $S^{2}=id_{H}$, it
follows that $\sum\Lambda_{(1)}\otimes\Lambda_{(2)}=\sum\Lambda_{(2)}%
\otimes\Lambda_{(1)}$.

For the coalgebra $H_{R}$, we have a similar result. Set $\mathbf{u}%
=\sum\left(  SR^{2}\right)  R^{1}$ to be the Drinfeld element, and
$\tilde{\alpha}=(\alpha\otimes id_{H})R=\left(  id_{H}\otimes\alpha
^{-1}\right)  R$.

\begin{prop}
\label{propd}Let $\left(  H,R\right)  $ be a finite dimensional
quasi-triangular Hopf algebra. Then for any left integral $\Lambda$ of $H$,
$\sum\Lambda^{(1)}\otimes\Lambda^{(2)}=\sum\Lambda^{(2)}\otimes\Lambda^{(1)}$
if and only if $\sum\Lambda_{(1)}\left(  S\mathbf{u}\right)  \tilde{\alpha
}\otimes\Lambda_{(2)}=\sum\tilde{\alpha}\left(  S\mathbf{u}\right)
\Lambda_{(1)}\otimes\Lambda_{(2)}$.

In particular, if $H$ is unimodular and $S^{2}=id_{H}$, then $\Lambda$ is
cocommutative under $\Delta_{R}$.
\end{prop}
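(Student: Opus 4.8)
The plan is to unwind the condition ``$\Lambda$ is cocommutative under $\Delta_{R}$'', i.e. $\Delta_{R}(\Lambda)=\tau\Delta_{R}(\Lambda)$ where $\tau$ is the flip, into a concrete identity in $H\otimes H$ and then to transform it, using the special features of an integral, into the stated one. First I would use the two available expressions for $\Delta_{R}$ applied to $\Lambda$: the defining one $\Delta_{R}(\Lambda)=\sum\Lambda_{(1)}(SR^{2})\otimes R^{1}\cdot_{ad}\Lambda_{(2)}$, and the alternative expression $\Delta_{R}(h)=\sum R^{2}\cdot_{ad}h_{(2)}\otimes R^{1}h_{(1)}$ established in the proof of Proposition~\ref{PropSubmodofH}. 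Applying the defining expression to $\Delta_{R}(\Lambda)$ and the alternative one (after $\tau$) to $\tau\Delta_{R}(\Lambda)$, the equality $\Delta_{R}(\Lambda)=\tau\Delta_{R}(\Lambda)$ becomes
\[
\sum\Lambda_{(1)}(SR^{2})\otimes R^{1}\cdot_{ad}\Lambda_{(2)}=\sum R^{1}\Lambda_{(1)}\otimes R^{2}\cdot_{ad}\Lambda_{(2)}.
\]
All the steps below will be reversible, so it suffices to massage this identity into the asserted one.

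Next I would expand the adjoint actions via $R^{i}\cdot_{ad}\Lambda_{(2)}=\sum (R^{i})_{(1)}\Lambda_{(2)}S((R^{i})_{(2)})$ together with the coproduct axioms \eqref{QT2} and \eqref{QT3} for the $R$-matrix, which turns both sides into expressions of the shape $\sum(\text{word in the }R\text{-legs})\Lambda_{(1)}(\text{word})\otimes(\text{word})\Lambda_{(2)}(\text{word})$. Then I would ``collapse'' each side onto the first tensor factor: the standard Frobenius-type identities for a left integral transport an algebra element from either side of $\Lambda_{(2)}$ in the second leg onto $\Lambda_{(1)}$ in the first leg (at the cost of an antipode, and, since $\Lambda y=\alpha(y)\Lambda$, with $\alpha$-decorations), while Radford's identity $\sum\Lambda_{(2)}\otimes\Lambda_{(1)}=\sum\Lambda_{(1)}\otimes(S^{2}\Lambda_{(2)})\mathbf{a}$ handles the interchange of the two legs. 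Carrying this out, the copies of $R$ regroup: combinations of the form $\sum S(R^{2})R^{1}$ (resp. $\sum R^{2}S(R^{1})$) assemble into the Drinfeld element $\mathbf{u}$ (resp. $S\mathbf{u}$), and one uses $S^{2}(\mathbf{u})=\mathbf{u}$ to pass between them; the modular data $(\alpha,\mathbf{a})$ together with a leftover leg of $R$ assemble into $\tilde{\alpha}=(\alpha\otimes id_{H})R=(id_{H}\otimes\alpha^{-1})R$. Matching the two collapsed expressions then yields exactly
\[
\sum\Lambda_{(1)}(S\mathbf{u})\tilde{\alpha}\otimes\Lambda_{(2)}=\sum\tilde{\alpha}(S\mathbf{u})\Lambda_{(1)}\otimes\Lambda_{(2)}.
\]

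The main obstacle is the bookkeeping of this computation: two or three copies of $R$ must be pushed through the adjoint-action expansions and the integral identities simultaneously, and one has to verify that no stray $R$-leg remains and that the Drinfeld element and the modular correction land on the sides dictated by the statement — this is the source of the asymmetry between $\Lambda_{(1)}(S\mathbf{u})\tilde{\alpha}$ and $\tilde{\alpha}(S\mathbf{u})\Lambda_{(1)}$. A convenient sanity check throughout is the group-algebra case, where $\mathbf{u}=1$, $\tilde{\alpha}=1$ and both sides reduce to $\Delta(\Lambda)$.

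Finally, the ``in particular'' is immediate from the equivalence just proved. If $S^{2}=id_{H}$, then Drinfeld's relation $S^{2}(h)=\mathbf{u}h\mathbf{u}^{-1}$ forces $\mathbf{u}$, hence also $S\mathbf{u}$, to be central. If moreover $H$ is unimodular, its distinguished grouplike functional is trivial, $\alpha=\varepsilon$, so $\tilde{\alpha}=(\varepsilon\otimes id_{H})R=1_{H}$ by property 3 of quasi-triangular Hopf algebras. Then both sides of the displayed condition equal $\sum(S\mathbf{u})\Lambda_{(1)}\otimes\Lambda_{(2)}$, so the condition holds and $\Lambda$ is cocommutative under $\Delta_{R}$.
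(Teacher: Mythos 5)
Your proposal follows essentially the same route as the paper: write one side of the cocommutativity condition with the defining formula for $\Delta_{R}$ and the other with the alternative formula $\Delta_{R}(h)=\sum R^{2}\cdot_{ad}h_{(2)}\otimes R^{1}h_{(1)}$, collapse everything onto the first tensor leg using the left-integral identities $\sum\Lambda_{(1)}\otimes h\Lambda_{(2)}=\sum(Sh)\Lambda_{(1)}\otimes\Lambda_{(2)}$ and $\sum\Lambda_{(1)}\otimes\Lambda_{(2)}h=\sum\Lambda_{(1)}\bar{S}(\alpha\rightharpoonup h)\otimes\Lambda_{(2)}$, and assemble $S\mathbf{u}$ and $\tilde{\alpha}$; your treatment of the ``in particular'' (centrality of $S\mathbf{u}$ plus $\tilde{\alpha}=1_{H}$ from unimodularity) is correct and in fact spelled out a bit more fully than in the paper. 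One small correction to your endgame: after collapsing, a stray $R$-leg \emph{does} remain — both sides come out as $\sum R^{1}\,(\cdots)\,R^{2}\otimes\Lambda_{(2)}$ with an outer conjugation by $R$ — and the equivalence is completed by observing that the map $a\mapsto\sum R^{1}aR^{2}$ is invertible, via the identity $\sum {R_{2}}^{1}{R_{1}}^{1}\otimes{R_{1}}^{2}\left(S{R_{2}}^{2}\right)=1_{H}\otimes1_{H}$ (a consequence of $(S\otimes id)(R)=R^{-1}$ and $(S\otimes S)(R)=R$); this is the one extra lemma your sketch needs to make ``matching the two collapsed expressions'' legitimate.
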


\begin{proof}
Since $\Lambda\in\int_{H}^{l}$, for any $h\in H$,
\[
\sum\Lambda_{(1)}\otimes h\Lambda_{(2)}=\sum\left(  Sh\right)  \Lambda
_{(1)}\otimes\Lambda_{(2)},
\]
and
\[
\sum\Lambda_{(1)}\otimes\Lambda_{(2)}h=\sum\Lambda_{(1)}\bar{S}\left(
\alpha\rightharpoonup h\right)  \otimes\Lambda_{(2)}.
\]
Hence,
\begin{align}
\sum\Lambda^{(1)}\otimes\Lambda^{(2)}  &  =\sum\Lambda_{(1)}S\left(  {R_{1}%
}^{2}{R_{2}}^{2}\right)  \otimes{R_{1}}^{1}\Lambda_{(2)}\left(  S{R_{2}}%
^{1}\right) \label{eqd1}\\
&  =\sum\left(  S{R_{1}}^{1}\right)  \Lambda_{(1)}\bar{S}\left(
\alpha\rightharpoonup\left(  S{R_{2}}^{1}\right)  \right)  S\left(  {R_{1}%
}^{2}{R_{2}}^{2}\right)  \otimes\Lambda_{(2)}\nonumber\\
&  =\sum{R_{1}}^{1}\Lambda_{(1)}\bar{S}\left(  \alpha\rightharpoonup{R_{2}%
}^{1}\right)  {R_{2}}^{2}{R_{1}}^{2}\otimes\Lambda_{(2)}\nonumber\\
&  =\sum{R_{1}}^{1}\Lambda_{(1)}\left(  \bar{S}{R_{2}}^{1}\right)  {R_{2}}%
^{2}\left\langle \alpha,{R_{3}}^{1}\right\rangle {R_{3}}^{2}{R_{1}}^{2}%
\otimes\Lambda_{(2)}\nonumber\\
&  =\sum{R}^{1}\Lambda_{(1)}\left(  S\mathbf{u}\right)  \tilde{\alpha}{R}%
^{2}\otimes\Lambda_{(2)}.\nonumber
\end{align}
On the other hand, by (\ref{QT1})
\begin{align}
\sum\Lambda^{(2)}\otimes\Lambda^{(1)}  &  =\sum{R_{1}}^{1}\Lambda_{(2)}{R_{2}%
}^{1}\otimes\Lambda_{(1)}{R_{2}}^{2}\left(  S{R_{1}}^{2}\right) \label{eqd2}\\
&  =\sum{R_{1}}^{1}{R_{2}}^{1}\Lambda_{(1)}\otimes{R_{2}}^{2}\Lambda
_{(2)}\left(  S{R_{1}}^{2}\right) \nonumber\\
&  =\sum{R_{1}}^{1}{R_{2}}^{1}\left(  S{R_{2}}^{2}\right)  \Lambda_{(1)}%
\bar{S}\left(  \alpha\rightharpoonup\left(  S{R_{1}}^{2}\right)  \right)
\otimes\Lambda_{(2)}\nonumber\\
&  =\sum{R_{1}}^{1}{R_{2}}^{1}\left\langle \alpha,S{R_{2}}^{2}\right\rangle
\left(  S\mathbf{u}\right)  \Lambda_{(1)}{R_{1}}^{2}\otimes\Lambda
_{(2)}\nonumber\\
&  =\sum{R}^{1}\tilde{\alpha}\left(  S\mathbf{u}\right)  \Lambda_{(1)}{R}%
^{2}\otimes\Lambda_{(2)}.\nonumber
\end{align}
Notice that $\sum{R_{2}}^{1}{R_{1}}^{1}\otimes{R_{1}}^{2}\left(  S{R_{2}}%
^{2}\right)  =\sum{R_{2}}^{1}\left(  S{R_{1}}^{1}\right)  \otimes S\left(
{R_{2}}^{2}{R_{1}}^{2}\right)  =1_{H}\otimes1_{H}.$ Comparing (\ref{eqd1})
with (\ref{eqd2}), we get the first part of this proposition. As illustrated
by Drinfeld \cite[Proposition 2.1]{drinfeld1990almost}, $S^{2}h=\mathbf{u}%
h\mathbf{u}^{-1}=S\left(  \mathbf{u}^{-1}\right)  h\left(  S\mathbf{u}\right)
$, for all $h\in H$. Then $S^{2}=id_{H}$ implies that $S\mathbf{u}$ is in the
center of $H$. Therefore, if $H$ is unimodular and $S^{2}=id_{H}$, then
$\sum\Lambda^{(1)}\otimes\Lambda^{(2)}=\sum\Lambda^{(2)}\otimes\Lambda^{(1)}$.
\end{proof}

Let ${H_{R}}^{\ast}=(H^{\ast},\ast_{R},\varepsilon)$ be the convolution
algebra of $H_{R}$. By (\ref{DefDeltaR}),
\begin{equation}
f\ast_{R}g=\sum(SR^{2}\rightharpoonup f)\ast\left(  g\leftharpoonup
\!\!\!\!\leftharpoonup R^{1}\right)  ,\ \forall~f,g\in{H_{R}}^{\ast},
\label{def*R}%
\end{equation}
where $\leftharpoonup\!\!\!\!\leftharpoonup$ denotes the right $H$-coadjoint
action on $H^{\ast}$ (i.e. $f\leftharpoonup\!\!\!\!\leftharpoonup
h=\sum(Sh_{(2)})\rightharpoonup f\leftharpoonup h_{(1)}$, $\forall~h\in
H,\ f\in H^{\ast}$). Thus, (\ref{eqc}) implies that
\begin{equation}
\left(  f\ast_{R}g\right)  \leftharpoonup\!\!\!\!\leftharpoonup h=\sum\left(
f\leftharpoonup\!\!\!\!\leftharpoonup h_{(1)}\right)  \ast_{R}\left(
g\leftharpoonup\!\!\!\!\leftharpoonup h_{(2)}\right)  ,\forall~h\in H,f,g\in
H^{\ast}. \label{*R_property}%
\end{equation}

\begin{lem}
[{{\cite[Theorem 3]{RADFORD1994583}}}]\label{lemb} Let $\lambda$ be an
integral of $H^{\ast},$ then for any $h\in H $,

\begin{enumerate}
\item $\sum\lambda_{(1)}\otimes h\rightharpoonup S^{\ast}\lambda_{(2)}=\sum
Sh\rightharpoonup\lambda_{(1)}\otimes S^{\ast}\lambda_{(2)}$,

\item $\sum\lambda_{(1)}\otimes S^{\ast}\lambda_{(2)}\leftharpoonup
h=\sum\lambda_{(1)}\leftharpoonup S^{3}h_{(1)}\left\langle \alpha^{-1}%
,h_{(2)}\right\rangle \otimes S^{\ast}\lambda_{(2)}$.
\end{enumerate}
\end{lem}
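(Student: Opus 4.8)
The plan is to prove Lemma~\ref{lemb} by the same strategy that was used for the integral $\Lambda$ of $H$ in the discussion preceding Proposition~\ref{propd}, but now transported to the dual side via the canonical identification of $H^{\ast}$-integrals with $H$-integrals. Recall that a (right, say) integral $\lambda$ of $H^{\ast}$ is characterized by $f\ast\lambda=\langle f,1_{H}\rangle\lambda$ for all $f\in H^{\ast}$; equivalently, as an element of $(H^{\ast})^{\ast}\cong H$ it behaves like a left integral. The first identity to establish is part~(1). I would start from the defining property of $\lambda$ as an integral and rewrite it as an identity in $H^{\ast}\otimes H^{\ast}$: the statement $f\ast\lambda=\varepsilon(f)\lambda$ for all $f$ is, after dualizing appropriately, equivalent to $\sum\lambda_{(1)}\otimes\lambda_{(2)}$ satisfying a translation-invariance identity. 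Concretely, applying the right-$H^{\ast}$-module structure one gets that for any $h\in H$, $\sum\lambda_{(1)}\otimes h\rightharpoonup\lambda_{(2)}$ and $\sum (Sh)\rightharpoonup\lambda_{(1)}\otimes\lambda_{(2)}$ are related; composing with $\mathrm{id}\otimes S^{\ast}$ on the appropriate tensor factor (and using that $S^{\ast}$ is an anti-coalgebra map, so it swaps the legs of $\Delta^{\ast}\lambda$) converts this into exactly the form in~(1).

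**Next, for part~(2)** I would proceed analogously, using this time the right $H$-action on the second tensor leg instead of the left $H$-action. The key external input is Radford's formula relating $\sum\lambda_{(1)}\otimes\lambda_{(2)}$ to $\sum\lambda_{(2)}\otimes\lambda_{(1)}$ up to the distinguished grouplike element $\alpha$ of $H^{\ast}$ (and powers of $S^2$) — this is precisely the dual of the identity $\sum\Lambda_{(2)}\otimes\Lambda_{(1)}=\sum\Lambda_{(1)}\otimes (S^2\Lambda_{(2)})\mathbf{a}$ quoted in the paper from \cite[Theorem 3]{RADFORD1994583}. Applying that formula, one picks up the factor $S^{3}h_{(1)}\langle\alpha^{-1},h_{(2)}\rangle$ exactly: the $S^3 = S\circ S^2$ arises from combining the antipode that comes from moving $h$ from acting on the right to acting on the left with the $S^2$-twist in Radford's formula, and the $\langle\alpha^{-1},h_{(2)}\rangle$ term is the contribution of the distinguished grouplike $\alpha$ (appearing as $\alpha^{-1}$ because we are using a right integral of $H^{\ast}$ rather than a left one, or equivalently because of the side on which $\alpha$ is produced). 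Since both parts of the lemma are literally \cite[Theorem 3]{RADFORD1994583}, the honest content of the proof is just identifying which instance of Radford's trace/integral formula one invokes and bookkeeping the antipode powers; I would simply cite the reference and indicate the translation, since the paper already did exactly this for the $\Lambda$-side.

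**The main obstacle** is purely notational rather than conceptual: keeping the $\rightharpoonup$ versus $\leftharpoonup$ conventions, and the distinction between $S^{\ast}$, $S^2$, $S^3$, consistent with the sign/side conventions fixed in the Preliminaries section. In particular one must be careful that "integral of $H^{\ast}$" here means a \emph{right} integral (as stated), so that the relevant formula is the one producing $\alpha^{-1}$ and $S^{3}$ rather than $\alpha$ and $S$; a sloppy reading of Radford's paper (which often states things for left integrals) would give the mirror-image formula. Since every displayed identity here is an instance of the cited theorem, no genuinely new estimate or construction is needed — the lemma is a recollection, and the "proof" is the single sentence "This is \cite[Theorem 3]{RADFORD1994583}, dualized," followed, if one wishes to be thorough, by the explicit dualization of the $\Lambda$-identities already written out above Proposition~\ref{propd}.
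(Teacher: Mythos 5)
The paper offers no proof of this lemma at all: it is stated as a direct quotation of \cite[Theorem 3]{RADFORD1994583}, so your bottom line (``cite Radford and indicate the translation'') coincides with what the authors actually do. The problem is that the translation you sketch would not assemble into a correct verification as written. First, the characterization you recall, $f\ast\lambda=\langle f,1_{H}\rangle\lambda$, defines a \emph{left} integral of $H^{\ast}$, whereas the $\lambda$ fixed in this paper is a \emph{right} integral (so $\lambda\ast f=\langle f,1_{H}\rangle\lambda$); you flag this as a danger at the end but then start from the wrong side. Second, for part (1) your concrete intermediate step is false: with the paper's conventions $(h\rightharpoonup f)(a)=f(ah)$ and $(f\leftharpoonup h)(a)=f(ha)$, the element $\sum\lambda_{(1)}\otimes h\rightharpoonup\lambda_{(2)}$ is the functional $a\otimes b\mapsto\lambda(abh)$ while $\sum (Sh)\rightharpoonup\lambda_{(1)}\otimes\lambda_{(2)}$ is $a\otimes b\mapsto\lambda(aS(h)b)$, and these are not equal for general $\lambda$ or even for integrals. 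Moreover the remark that $S^{\ast}$ ``swaps the legs'' applies to $\Delta(S^{\ast}\lambda)$, not to $(\mathrm{id}\otimes S^{\ast})\Delta\lambda$, which is what occurs in the statement. In fact part (1) needs no integral property whatsoever: both sides, evaluated on $a\otimes b$, equal $\lambda\left(aS(h)S(b)\right)$, so it is an identity valid for every $\lambda\in H^{\ast}$; the honest proof is this two-line evaluation, not a translation-invariance argument.

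Third, for part (2) the input you name is not quite the right one. Dualizing $\sum\Lambda_{(2)}\otimes\Lambda_{(1)}=\sum\Lambda_{(1)}\otimes(S^{2}\Lambda_{(2)})\mathbf{a}$ produces the distinguished grouplike of $H$ (i.e.\ $\mathbf{a}$, viewed in $H^{\ast\ast}$), not the element $\alpha\in H^{\ast}$ that actually appears in the statement. The formula one must invoke is the companion statement in the same Theorem 3, namely the Nakayama-type identity $\lambda(xh)=\lambda\left(S^{2}(h\leftharpoonup\alpha)\,x\right)$ for a right integral $\lambda$ of $H^{\ast}$, where $h\leftharpoonup\alpha=\sum\langle\alpha,h_{(1)}\rangle h_{(2)}$. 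Part (2) is exactly this identity after the substitution $h\mapsto S^{-1}h$ (using $S^{\ast}\alpha=\alpha^{-1}$ to convert $\alpha$ into $\alpha^{-1}$ and $S^{2}\circ S^{-1}$ into the odd power $S^{3}$ on the other leg) and evaluation against $a\otimes b$. So the strategy ``cite Radford plus antipode bookkeeping'' is right and matches the paper, but the bookkeeping you propose starts from a misidentified integral side, routes part (1) through a false identity, and points at the wrong half of Radford's theorem for part (2); each of these would have to be repaired before the sketch becomes a proof.
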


Clearly, $H_{R}$ is cosemisimple if and only if ${H_{R}}^{\ast}$ is
semisimple. We next find a separable idempotent of ${H_{R}}^{\ast}$, provided
that ${H}$ is cosemisimple and unimodular.

\begin{prop}
\label{propcos.s}Let $\left(  H,R\right)  $ be a finite dimensional
quasi-triangular Hopf algebra. Assume that $H$ is cosemisimple and unimodular.
Let $\lambda$ be an integral of $H^{\ast}$ which satisfies $\langle
\lambda,1_{H}\rangle=1$, and $e_{R}=\sum R^{2}\rightharpoonup\lambda
_{(1)}\otimes S^{\ast}\lambda_{(2)}\leftharpoonup\!\!\!\!\leftharpoonup R^{1}
$. Then ${H_{R}}^{\ast}$ is semisimple and $e_{R}$ is a separable idempotent, i.e.

\begin{enumerate}
\item $\sum\left(  R^{2}\rightharpoonup\lambda_{(1)}\right)  \ast_{R}\left(
S^{\ast}\lambda_{(2)}\leftharpoonup\!\!\!\!\leftharpoonup R^{1}\right)
=\varepsilon$,

\item For all $f\in H^{\ast}$, $\left(  f\otimes\varepsilon\right)  \ast
_{R}e_{R}=e_{R}\ast_{R}\left(  \varepsilon\otimes f\right)  $.
\end{enumerate}
\end{prop}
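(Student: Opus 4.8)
The plan is to verify the two stated identities by direct computation, using the formula \eqref{def*R} for $\ast_R$, the cosemisimplicity/unimodularity hypotheses on $H$ (which give the relevant properties of the integral $\lambda$), and the two identities of Lemma~\ref{lemb}. The key structural input is that $H$ cosemisimple with $\langle\lambda,1_H\rangle=1$ means $\lambda$ is an idempotent integral for $H^\ast$ with $S^\ast\lambda=\lambda$ (since cosemisimplicity forces $S^2=\mathrm{id}$, hence $\langle\lambda, S h\rangle=\langle\lambda,h\rangle$), and unimodularity of $H$ means that as a functional $\lambda$ satisfies the ordinary integral property $\sum \lambda_{(1)}\langle\lambda_{(2)},h\rangle=\langle\lambda,h\rangle\varepsilon$ on both sides without a twist by the distinguished grouplike $\mathbf{a}$. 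The first thing I would do is record precisely which normalizations of $\lambda$ follow from the hypotheses, so that the bookkeeping below is unambiguous.

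\medskip

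For part (1): by definition $e_R=\sum R^2\rightharpoonup\lambda_{(1)}\otimes S^\ast\lambda_{(2)}\leftharpoonup\!\!\!\!\leftharpoonup R^1$, so applying the multiplication map $\ast_R$ gives $\sum (R^2\rightharpoonup\lambda_{(1)})\ast_R(S^\ast\lambda_{(2)}\leftharpoonup\!\!\!\!\leftharpoonup R^1)$. I would unfold the outer $\ast_R$ via \eqref{def*R}, introducing a second copy $R_2$ of the $R$-matrix, and then use Lemma~\ref{lemb}(2) to move the right coadjoint action $\leftharpoonup\!\!\!\!\leftharpoonup R^1$ across the coproduct of $\lambda$ — this replaces the coadjoint action by a twisted $S^3$-action together with a pairing against $\alpha^{-1}$. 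Then use Lemma~\ref{lemb}(1) to move the left action $R^2\rightharpoonup(-)$ across, converting it to an $S$-twisted action on the other leg. After these two moves the $R$-matrix entries should telescope: the antipode identity $(S\otimes S)(R)=R$, property $(S\otimes\mathrm{id})(R)=R^{-1}$, and the counit property $(\varepsilon\otimes\mathrm{id})(R)=1_H$ should collapse everything, and the unimodularity of $H$ (so that $\lambda$ has no twist) plus $\langle\lambda,1_H\rangle=1$ should leave exactly $\varepsilon$. This is essentially the transmuted-braided-group analogue of the standard computation that $\sum S^\ast\lambda_{(1)}\ast\lambda_{(2)}=\varepsilon$ in the cosemisimple unimodular case.

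\medskip

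For part (2), the statement $(f\otimes\varepsilon)\ast_R e_R=e_R\ast_R(\varepsilon\otimes f)$ is the centrality condition a separable idempotent must satisfy. I would prove it by first rewriting the left side: $(f\otimes\varepsilon)\ast_R e_R=\sum (f\ast_R (R^2\rightharpoonup\lambda_{(1)}))\otimes (S^\ast\lambda_{(2)}\leftharpoonup\!\!\!\!\leftharpoonup R^1)$, unfolding the inner $\ast_R$, and using the $\leftharpoonup\!\!\!\!\leftharpoonup$-compatibility \eqref{*R_property} together with Lemma~\ref{lemb} to shuttle the actions onto $f$. The goal is to transform both sides into a common symmetric expression in which $f$ has been "absorbed" between the two legs of $\lambda$'s coproduct; the identity $\sum {R_2}^1{R_1}^1\otimes {R_1}^2(S{R_2}^2)=1_H\otimes1_H$ used already in the proof of Proposition~\ref{propd} will again be the mechanism that collapses the double $R$-matrix. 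I expect the main obstacle to be exactly this index juggling: keeping track of three or four simultaneous copies of $R$ together with the $S$, $S^3$, and $\alpha^{\pm 1}$ twists coming from Lemma~\ref{lemb}, and choosing the order of substitutions so that the telescoping is visible rather than buried. Once (1) and (2) hold, semisimplicity of ${H_R}^\ast$ is immediate: $e_R$ is a separable idempotent (part (1) is the normalization $m(e_R)=\varepsilon$ and part (2) is centrality), so ${H_R}^\ast$ is a separable algebra, hence semisimple, and therefore $H_R$ is cosemisimple.
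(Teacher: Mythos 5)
Your overall strategy (unfold $\ast_{R}$ via (\ref{def*R}), shuttle the actions with Lemma~\ref{lemb}, collapse the $R$-matrices) points in the same direction as the paper, but the proposal has a genuine gap: the entire computational core of part (2) is left as an expectation that the $R$-matrix entries ``should telescope,'' and as described the telescoping will not close. The paper's proof of (2) hinges on two ingredients your plan never identifies. First, before touching $f$ at all, one must rewrite $e_{R}$ in the symmetric form $e_{R}=\sum\left(SR^{2}\right)\mathbf{u}^{-1}\rightharpoonup\lambda_{(1)}\leftharpoonup R^{1}\otimes S^{\ast}\lambda_{(2)}$, using both parts of Lemma~\ref{lemb} together with the fact that unimodularity of $H$ and of $H^{\ast}$ (the latter from cosemisimplicity) forces $\mathbf{a}\tilde{\alpha}=1_{H}$; this is what puts the Drinfeld element $\mathbf{u}$ into the picture. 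Second, after expanding both $\left(f\otimes\varepsilon\right)\ast_{R}e_{R}$ and $e_{R}\ast_{R}\left(\varepsilon\otimes f\right)$ one is left with $\mathbf{u}^{-1}$ sitting between $R$-matrix legs, and the only way to match the two sides is Lyubashenko's formula $\Delta(\mathbf{u})=\left(R^{21}R\right)^{-1}(\mathbf{u}\otimes\mathbf{u})$, applied alongside repeated uses of the quantum Yang--Baxter equation across as many as six copies of $R$. The identity $\sum{R_{2}}^{1}{R_{1}}^{1}\otimes{R_{1}}^{2}\left(S{R_{2}}^{2}\right)=1_{H}\otimes1_{H}$ that you cite is used in Proposition~\ref{propd}, not here, and by itself it cannot absorb the $S^{3}$- and $\alpha^{-1}$-twists produced by Lemma~\ref{lemb}(2). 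Since whether the computation closes is exactly the content of the proposition, deferring it to ``index juggling'' leaves the claim unproved.

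A secondary issue: your justification that $S^{\ast}\lambda=\lambda$ because ``cosemisimplicity forces $S^{2}=\mathrm{id}$'' is not valid at this level of generality (that implication is a Larson--Radford-type statement requiring additional hypotheses such as characteristic zero together with semisimplicity, neither of which is assumed in this proposition). The correct route to $S^{\ast}\lambda=\lambda$ is uniqueness of the normalized two-sided integral of the unimodular algebra $H^{\ast}$ -- but note the paper never needs this identity: it carries $S^{\ast}\lambda_{(2)}$ through the whole computation untouched. Your concluding step (separable idempotent $\Rightarrow$ ${H_{R}}^{\ast}$ separable $\Rightarrow$ semisimple $\Rightarrow$ $H_{R}$ cosemisimple) is fine once (1) and (2) are established.
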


\begin{proof}
Part 1) follows immediately by (\ref{def*R}). Since $H$ and $H^{\ast}$ are
unimodular, it is clearly that $\mathbf{u}S\left(  \mathbf{u}^{-1}\right)
=\mathbf{g}=\mathbf{a}\tilde{\alpha}=1_{H}.$ By Lemma \ref{lemb}, we have
\begin{align*}
e_{R}  &  =\sum{R_{1}}^{2}{R_{2}}^{2}\rightharpoonup\lambda_{(1)}\otimes
S{R_{2}}^{1}\rightharpoonup S^{\ast}\lambda_{(2)}\leftharpoonup{R_{1}}^{1}\\
&  =\sum{R_{1}}^{2}{R_{2}}^{2}\left(  S^{2}{R_{2}}^{1}\right)  \rightharpoonup
\lambda_{(1)}\otimes S^{\ast}\lambda_{(2)}\leftharpoonup{R_{1}}^{1}\\
&  =\sum{R_{1}}^{2}{R_{2}}^{2}u^{-1}\rightharpoonup\lambda_{(1)}\leftharpoonup
S^{3}{R_{1}}^{1}\left\langle \alpha^{-1},{R_{2}}^{1}\right\rangle \otimes
S^{\ast}\lambda_{(2)}\\
&  =\sum{R}^{2}\tilde{\alpha}^{-1}u^{-1}\rightharpoonup\lambda_{(1)}%
\leftharpoonup S^{3}{R}^{1}\otimes S^{\ast}\lambda_{(2)}\\
&  =\sum S{R}^{2}u^{-1}\rightharpoonup\lambda_{(1)}\leftharpoonup{R}%
^{1}\otimes S^{\ast}\lambda_{(2)}.
\end{align*}
Then for $f\in H^{\ast}$, we get
\begin{align*}
&  \left(  f\otimes\varepsilon\right)  \ast_{R}e_{R}\\
&  =\sum f\ast_{R}\left(  \left(  S{R}^{2}\right)  u^{-1}\rightharpoonup
\lambda_{(1)}\leftharpoonup{R}^{1}\right)  \otimes S^{\ast}\lambda_{(2)}\\
&  =\sum\left(  S{R_{2}}^{2}\rightharpoonup f\right)  \ast\left(  \left(
\left(  S{R_{1}}^{2}\right)  u^{-1}\rightharpoonup\lambda_{(1)}\leftharpoonup
{R_{1}}^{1}\right)  \leftharpoonup\!\!\!\!\leftharpoonup{R_{2}}^{1}\right)
\otimes S^{\ast}\lambda_{(2)}\\
&  =\sum\left(  S\left(  {R_{2}}^{2}{R_{3}}^{2}\right)  \rightharpoonup
f\right)  \ast\left(  S\left(  {R_{1}}^{2}{R_{3}}^{1}\right)  u^{-1}%
\rightharpoonup\lambda_{(1)}\leftharpoonup{R_{1}}^{1}{R_{2}}^{1}\right)
\otimes S^{\ast}\lambda_{(2)}\\
&  =\sum\left(  S\left(  {R_{2}}^{2}{R_{1}}^{2}\right)  \rightharpoonup
f\right)  \ast\left(  S\left(  {R_{2}}^{1}{R_{3}}^{2}\right)  u^{-1}%
\rightharpoonup\lambda_{(1)}\leftharpoonup{R_{1}}^{1}{R_{3}}^{1}\right)
\otimes S^{\ast}\lambda_{(2)}\\
&  =\sum\left(  {R_{1}}^{2}{R_{2}}^{2}\rightharpoonup f\right)  \ast\left(
{R_{3}}^{2}{R_{2}}^{1}u^{-1}\rightharpoonup\lambda_{(1)}\leftharpoonup\bar
{S}\left(  {R_{3}}^{1}{R_{1}}^{1}\right)  \right)  \otimes S^{\ast}%
\lambda_{(2)}.
\end{align*}
By a result of Lyubashenko \cite{lyubashenko1986superanalysis} that
$\Delta(\mathbf{u})=\left(  {R^{21}R}\right)  ^{-1}(\mathbf{u}\otimes
\mathbf{u})=(\mathbf{u}\otimes\mathbf{u}){(R^{21}R)}^{-1}$, then
\begin{align*}
&  e_{R}\ast_{R}\left(  \varepsilon\otimes f\right) \\
&  =\sum\left(  S{R}^{2}\right)  u^{-1}\rightharpoonup\lambda_{(1)}%
\leftharpoonup{R}^{1}\otimes S^{\ast}\lambda_{(2)}\ast_{R}f\\
&  =\sum\left(  S{R_{1}}^{2}\right)  u^{-1}\rightharpoonup\lambda
_{(1)}\leftharpoonup{R_{1}}^{1}\otimes\left(  S{R_{2}}^{2}\rightharpoonup
S^{\ast}\lambda_{(2)}\right)  \ast\left(  f\leftharpoonup
\!\!\!\!\leftharpoonup{R_{2}}^{1}\right) \\
&  =\sum\left(  S{R_{1}}^{2}\right)  u^{-1}\left(  S^{2}{R_{2}}^{2}\right)
\rightharpoonup\lambda_{(1)}\leftharpoonup{R_{1}}^{1}\otimes\left(  S^{\ast
}\lambda_{(2)}\right)  \ast\left(  f\leftharpoonup\!\!\!\!\leftharpoonup
{R_{2}}^{1}\right) \\
&  =\sum\left(  S{R_{1}}^{2}\right)  {R_{2}}^{2}u^{-1}\rightharpoonup\left(
\left(  f\leftharpoonup\!\!\!\!\leftharpoonup{R_{2}}^{1}\right)  \ast
\lambda_{(1)}\right)  \leftharpoonup{R_{1}}^{1}\otimes S^{\ast}\lambda_{(2)}\\
&  =\sum\left(  S{R_{1}}^{2}\right)  {R_{2}}^{2}{R_{3}}^{2}u^{-1}%
\rightharpoonup\left(  \left(  S{R_{3}}^{1}\rightharpoonup f\leftharpoonup
{R_{2}}^{1}\right)  \ast\lambda_{(1)}\right)  \leftharpoonup{R_{1}}^{1}\otimes
S^{\ast}\lambda_{(2)}\\
&  =\sum S\left(  {R_{1}}^{2}{R_{4}}^{2}\right)  {R_{2}}^{2}{R_{3}}^{2}%
u^{-1}\rightharpoonup\left(  \left(  S{R_{3}}^{1}\rightharpoonup
f\leftharpoonup{R_{2}}^{1}{R_{1}}^{1}\right)  \ast\left(  \lambda
_{(1)}\leftharpoonup{R_{4}}^{1}\right)  \right)  \otimes S^{\ast}\lambda
_{(2)}\\
\end{align*}
\begin{align*}
&  =\sum S\left(  {R_{2}}^{2}{R_{1}}^{2}{R_{4}}^{2}\right)  {R_{3}}^{2}%
u^{-1}\rightharpoonup\left(  \left(  S{R_{3}}^{1}\rightharpoonup
f\leftharpoonup\left(  S{R_{2}}^{1}\right)  {R_{1}}^{1}\right)  \ast\left(
\lambda_{(1)}\leftharpoonup{R_{4}}^{1}\right)  \right)  \otimes S^{\ast
}\lambda_{(2)}\\
&  =\sum{R_{1}}^{2}{R_{2}}^{2}u^{-1}\rightharpoonup\left(  \left(  S{R_{2}%
}^{1}\rightharpoonup f\right)  \ast\left(  \lambda_{(1)}\leftharpoonup\bar
{S}{R_{1}}^{1}\right)  \right)  \otimes S^{\ast}\lambda_{(2)}\\
&  =\sum\left(  {R_{1}}^{2}{R_{5}}^{2}{R_{3}}^{2}{R_{4}}^{1}u^{-1}S\left(
{R_{2}}^{1}{R_{5}}^{1}\right)  \rightharpoonup f\right) \\
&  \quad\ast\left(  {R_{6}}^{2}{R_{2}}^{2}{R_{3}}^{1}{R_{4}}^{2}%
u^{-1}\rightharpoonup\lambda_{(1)}\leftharpoonup\bar{S}\left(  {R_{6}}%
^{1}{R_{1}}^{1}\right)  \right)  \otimes S^{\ast}\lambda_{(2)}\\
&  =\sum\left(  {R_{1}}^{2}{R_{2}}^{2}{R_{3}}^{2}{R_{4}}^{1}u^{-1}S\left(
{R_{3}}^{1}{R_{5}}^{1}\right)  \rightharpoonup f\right) \\
&  \quad\ast\left(  {R_{6}}^{2}{R_{2}}^{1}{R_{5}}^{2}{R_{4}}^{2}%
u^{-1}\rightharpoonup\lambda_{(1)}\leftharpoonup\bar{S}\left(  {R_{6}}%
^{1}{R_{1}}^{1}\right)  \right)  \otimes S^{\ast}\lambda_{(2)}\\
&  =\sum\left(  {R_{1}}^{2}{R_{2}}^{2}{R_{3}}^{1}{R_{5}}^{2}u^{-1}S\left(
{R_{4}}^{1}{R_{5}}^{1}\right)  \rightharpoonup f\right) \\
&  \quad\ast\left(  {R_{6}}^{2}{R_{2}}^{1}{R_{3}}^{2}{R_{4}}^{2}%
u^{-1}\rightharpoonup\lambda_{(1)}\leftharpoonup\bar{S}\left(  {R_{6}}%
^{1}{R_{1}}^{1}\right)  \right)  \otimes S^{\ast}\lambda_{(2)}\\
&  =\sum\left(  {R_{1}}^{2}{R_{2}}^{2}{R_{3}}^{1}u^{-1}\left(  S^{2}{R_{5}%
}^{2}\right)  S\left(  {R_{4}}^{1}{R_{5}}^{1}\right)  \rightharpoonup f\right)
\\
&  \quad\ast\left(  {R_{6}}^{2}{R_{2}}^{1}{R_{3}}^{2}{R_{4}}^{2}%
u^{-1}\rightharpoonup\lambda_{(1)}\leftharpoonup\bar{S}\left(  {R_{6}}%
^{1}{R_{1}}^{1}\right)  \right)  \otimes S^{\ast}\lambda_{(2)}\\
&  =\sum\left(  {R_{1}}^{2}{R_{2}}^{2}\rightharpoonup f\right)  \ast\left(
{R_{3}}^{2}{R_{2}}^{1}u^{-1}\rightharpoonup\lambda_{(1)}\leftharpoonup\bar
{S}\left(  {R_{3}}^{1}{R_{1}}^{1}\right)  \right)  \otimes S^{\ast}%
\lambda_{(2)}\\
&  =\left(  f\otimes\varepsilon\right)  \ast_{R}e_{R}.\qedhere
\end{align*}

\end{proof}

Let $\left(  H,R\right)  $ be a semisimple and cosemisimple quasi-triangular
Hopf algebra, then by Proposition \ref{propcos.s} the transmuted braided group
$H_{R}$ is cosemisimple as a $k$-coalgebra. In addition, the Yetter-Drinfeld
module category ${}_{H}^{H}\mathcal{YD}$ is semisimple. Note that $H_{R}$ is
an $H$-module coalgebra via $\cdot_{ad}$. On one hand, as a
Yetter-Drinfeld~module $H\in{}_{H}^{H}\mathcal{YD}$ is completely reducible.
On the other hand, as an $H$-module coalgebra, $H_{R}$ can be decomposed into
a sum of minimal $H$-adjoint-stable subcoalgebras. We now give a refinement of
Proposition \ref{PropSubmodofH}.

\begin{prop}
\label{propHdecomp}Let $\left(  H,R\right)  $ be a semisimple and cosemisimple
quasi-triangular Hopf algebra, then there is a unique decomposition
\[
H_{R}=D_{1}\oplus\cdots\oplus D_{r}%
\]
of the minimal $H$-adjoint-stable subcoalgebras $D_{1},\ldots,D_{r}$ of
$H_{R}$. It is also the decomposition of $H\in{}_{H}^{H}\mathcal{YD}$ as a
direct sum of irreducible Yetter-Drinfeld modules.
\end{prop}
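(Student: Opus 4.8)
The plan is to combine Proposition~\ref{PropSubmodofH} with the semisimplicity of ${}_{H}^{H}\mathcal{YD}$ and a uniqueness argument modeled on the classical block decomposition of a cosemisimple coalgebra. First I would record the starting point: by Proposition~\ref{propcos.s}, under the semisimple-and-cosemisimple hypothesis the coalgebra $H_R$ is cosemisimple, hence ${}_{H}^{H_R}\mathcal{M}={}_{H}^{H}\mathcal{YD}$ is a semisimple category, and in particular the object $H=(H,\cdot_{ad},\Delta_R)$ decomposes as a finite direct sum $H=D_1\oplus\cdots\oplus D_r$ of irreducible Yetter-Drinfeld submodules. By Proposition~\ref{PropSubmodofH} each $D_i$ is a subcoalgebra of $H_R$ that is $H$-adjoint-stable, and irreducibility of $D_i$ as a Yetter-Drinfeld module is exactly minimality of $D_i$ as an $H$-adjoint-stable subcoalgebra. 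So existence of a decomposition into minimal $H$-adjoint-stable subcoalgebras, coinciding with the Yetter-Drinfeld decomposition, is essentially a restatement of the two previous propositions; the only real content is \emph{uniqueness}.

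For uniqueness I would argue as follows. Suppose $H_R=D_1\oplus\cdots\oplus D_r=D_1'\oplus\cdots\oplus D_s'$ are two decompositions into minimal $H$-adjoint-stable subcoalgebras. Fix $D_i$. Since the sum $\sum_j D_j'$ is direct and each $D_j'$ is a subcoalgebra, the intersection $D_i\cap D_j'$ is again an $H$-adjoint-stable subcoalgebra of $H_R$ contained in the minimal object $D_i$, hence is either $0$ or all of $D_i$; but it is also contained in $D_j'$, so if it equals $D_i$ then $D_i\subseteq D_j'$. The key point is that $D_i$ cannot meet two distinct $D_j'$ nontrivially while being a subcoalgebra: if $D_i\subseteq D_{j_1}'\oplus D_{j_2}'$ with both projections nonzero, then applying $\Delta_R$ (valued in $H\otimes D_i$, and, by Proposition~\ref{PropSubmodofH}, also in $D_i\otimes H$) together with the coalgebra structure of the direct summands forces a contradiction with minimality — concretely, the projection $\pi_{j_1}\colon D_i\to D_{j_1}'$ would be a morphism of $H$-adjoint-stable coalgebras with image a nonzero $H$-adjoint-stable subcoalgebra of the minimal $D_{j_1}'$, hence $\pi_{j_1}$ is an isomorphism onto $D_{j_1}'$, and similarly $\pi_{j_2}$; but then the coproduct on $D_i$ cannot be compatible with the coproduct on the direct sum $D_{j_1}'\oplus D_{j_2}'$, since counit considerations (group-like-type elements, or more precisely the fact that a coalgebra has no nonzero element $d$ with $\Delta_R(d)\in (D_{j_1}'\otimes D_{j_2}')\oplus(D_{j_2}'\otimes D_{j_1}')$) rule it out. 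Thus each $D_i$ is contained in a unique $D_j'$, giving a map $\{1,\dots,r\}\to\{1,\dots,s\}$; by minimality of $D_j'$ and the fact that the $D_i$ already sum to all of $H_R$, this map is a bijection with $D_i=D_{j}'$. Alternatively, and more cleanly, I would phrase uniqueness via the dual: $H_R^*={}_{}H_R{}^*$ is a semisimple algebra with an $H$-action by the coadjoint action, its two-sided $H$-stable ideals correspond to $H$-adjoint-stable subcoalgebras of $H_R$, and the decomposition into minimal ones is the decomposition into (sums of) blocks grouped by the $H$-action, which is unique because the central primitive idempotents are intrinsically determined.

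The main obstacle is pinning down uniqueness rigorously: one must be careful that "minimal $H$-adjoint-stable subcoalgebra" behaves like "block," i.e. that two such either coincide or intersect in $0$, and that no minimal piece straddles two others. I expect the dual formulation — working in the semisimple algebra $H_R^*$ with the coadjoint $H$-action, using that its Wedderburn blocks are permuted by $H$ (here one uses that $H$ is a Hopf algebra, so the $H$-action respects the algebra structure via \eqref{*R_property}) and that grouping blocks into $H$-orbits is canonical — to be the most robust route, and I would present that as the proof, invoking \eqref{*R_property}, Proposition~\ref{propcos.s}, Proposition~\ref{PropSubmodofH}, and the complete reducibility of $H\in{}_{H}^{H}\mathcal{YD}$ established above.
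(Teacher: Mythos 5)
Your proposal is correct and is essentially the argument the paper intends: the paper states Proposition~\ref{propHdecomp} without proof, as an immediate consequence of the cosemisimplicity of $H_{R}$ (Proposition~\ref{propcos.s}), the resulting complete reducibility of $H\in{}_{H}^{H}\mathcal{YD}$, and the identification in Proposition~\ref{PropSubmodofH} of irreducible Yetter--Drinfeld submodules with minimal $H$-adjoint-stable subcoalgebras; you correctly isolate uniqueness as the only point needing an argument. For the direct version of your uniqueness step, the ``counit considerations'' can be made precise in one line: in a direct sum of coalgebras $\oplus_{j}D_{j}'$, any subcoalgebra $D$ satisfies $D=\oplus_{j}(D\cap D_{j}')$, because for $d\in D$ the component $\pi_{j}(d)=(\varepsilon\pi_{j}\otimes id)\Delta_{R}(d)$ lies in $D$; combined with minimality (each $D_{i}\cap D_{j}'$ is an $H$-adjoint-stable subcoalgebra, hence $0$ or $D_{i}$), this forces each $D_{i}$ to coincide with exactly one $D_{j}'$, and your dual formulation via $H$-orbits of blocks of the semisimple algebra ${H_{R}}^{\ast}$ is an equally valid route.
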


\section{The structure of module categories}

\label{section-mod-cat-general}Let $\mathcal{C}$ be a fusion category and
$\left(  \mathcal{M},\otimes,a\right)  $ be a $k$-linear abelian semisimple
left module category over $\mathcal{C}$. Let $\left(  A,m,u\right)  $ be a
$\mathcal{C}$-algebra. A left $A$-module in $\mathcal{M}$ is a pair $\left(
M,p\right)  $, where $M$ is an object of $\mathcal{M}$ and $p:A\otimes
M\rightarrow M$ is a morphism satisfying two natural axioms,
\[
p\circ\left(  m\otimes id\right)  =p\circ\left(  id\otimes p\right)  \circ
a_{A,A,M},\quad p\circ\left(  u\otimes id\right)  =id.
\]
As defined in \cite{Etingof2015tensor}, for right $A$-module $U$ and left
$A$-module $M$ both in $\mathcal{C}$, we can define $U\otimes_{A}M$ for right
$A$-module $\left(  U,q\right)  $ in $\mathcal{C}$ and left $A$-module
$\left(  M,p\right)  $ in $\mathcal{M}$, where $M$ is not necessarily an
object of $\mathcal{C}$.

Namely, $U\otimes_{A}M\in\mathcal{M}$ is the co-equalizer of the diagram
\[
(U\otimes A)\otimes
M\;\tikz[baseline=-.3ex] {\draw[->] (0,.8ex) -- node[above]{$q\otimes id_M$}(3cm,0.8ex); \draw[->] (0,0ex) -- node[below]{$(id_U\otimes p) \circ a_{U,A,M}$}(3cm,0ex);}\;U\otimes
M\longrightarrow U\otimes_{A}M,
\]
i.e., the cokernel of the morphism $q\otimes id_{M}-\left(  id_{U}\otimes
p\right)  \circ a_{U,A,M}$.

For any $M,V\in\mathcal{M}$, the internal Hom $\underline{\operatorname{Hom}%
}\left(  M,V\right)  $ always exists. Take $A=\underline{\operatorname{Hom}%
}\left(  M,M\right)  $, then $A$ is an algebra in $\mathcal{C}$. $\left(
\underline{\operatorname{Hom}}\left(  M,V\right)  ,\mu_{M,M,V}\right)  $ is a
right $A$-module in $\mathcal{C}$, and $\left(  M,ev_{M,M}\right)  $ is a left
$A$-module in $\mathcal{M}$, as stated in the preliminaries.

\begin{prop}
\label{propiHom&otA}Let $M$ be an object of $\mathcal{M}$, and $A=\underline
{\operatorname{Hom}}\left(  M,M\right)  $. Then for any right $A$-module
$\left(  U,q\right)  $ in $\mathcal{C}$ and any $V\in\mathcal{M}$, there is a
canonical isomorphism
\begin{equation}
\zeta_{U,V}:\operatorname{Hom}_{\mathcal{M}}\left(  U\otimes_{A}M,V\right)
\rightarrow\operatorname{Hom}_{A}\left(  U,\underline{\operatorname{Hom}%
}\left(  M,V\right)  \right)  , \label{isoadjoint}%
\end{equation}
which are natural both in $U$ and $V$.
\end{prop}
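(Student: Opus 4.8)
The plan is to establish the adjunction by a chain of natural isomorphisms, exploiting the universal (co-equalizer) property of $U \otimes_A M$ on one side and the representing (internal Hom) property of $\underline{\operatorname{Hom}}(M,V)$ on the other. First I would use the defining co-equalizer diagram for $U \otimes_A M$: applying the functor $\operatorname{Hom}_{\mathcal{M}}(-,V)$ to the diagram
\[
(U\otimes A)\otimes M\;\rightrightarrows\;U\otimes M\longrightarrow U\otimes_A M
\]
and using that $\operatorname{Hom}_{\mathcal{M}}(-,V)$ sends co-equalizers to equalizers, I obtain that $\operatorname{Hom}_{\mathcal{M}}(U\otimes_A M, V)$ is the equalizer of the two maps
\[
\operatorname{Hom}_{\mathcal{M}}(U\otimes M, V)\;\rightrightarrows\;\operatorname{Hom}_{\mathcal{M}}((U\otimes A)\otimes M, V),
\]
namely precomposition with $q\otimes id_M$ and with $(id_U\otimes ev_{M,M})\circ a_{U,A,M}$ (recalling that the left $A$-module structure on $M$ is $p = ev_{M,M}$).

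Next I would transport this whole equalizer diagram across the internal-Hom adjunction \eqref{def-in-hom}. Since $\eta_{\bullet,M,V}$ is a natural isomorphism $\operatorname{Hom}_{\mathcal{M}}(\bullet\otimes M, V)\cong\operatorname{Hom}_{\mathcal{C}}(\bullet,\underline{\operatorname{Hom}}(M,V))$, it identifies $\operatorname{Hom}_{\mathcal{M}}(U\otimes M,V)$ with $\operatorname{Hom}_{\mathcal{C}}(U,\underline{\operatorname{Hom}}(M,V))$ and $\operatorname{Hom}_{\mathcal{M}}((U\otimes A)\otimes M,V)$ with $\operatorname{Hom}_{\mathcal{C}}(U\otimes A,\underline{\operatorname{Hom}}(M,V))$. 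So $\operatorname{Hom}_{\mathcal{M}}(U\otimes_A M,V)$ becomes the equalizer of two maps
\[
\operatorname{Hom}_{\mathcal{C}}(U,\underline{\operatorname{Hom}}(M,V))\;\rightrightarrows\;\operatorname{Hom}_{\mathcal{C}}(U\otimes A,\underline{\operatorname{Hom}}(M,V)).
\]
The core computation is to identify these two transported maps: one should be precomposition with $q\otimes id_A$ (the right $A$-action on $U$), and the other should be precomposition with $id_U\otimes \mu$ followed by the right $A$-module structure $\mu_{M,M,V}$ on $\underline{\operatorname{Hom}}(M,V)$ — i.e. the two maps whose equalizer is by definition $\operatorname{Hom}_A(U,\underline{\operatorname{Hom}}(M,V))$. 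This requires unwinding the definitions of $\mu_{M,M,V}$ and $ev_{M,M}$ through the isomorphism \eqref{def-in-hom}; concretely one checks that $\eta$ intertwines ``precompose with $(id_U\otimes ev_{M,M})\circ a_{U,A,M}$'' on the $\mathcal{M}$-side with ``postcompose with $\mu_{M,M,V}\circ(-\otimes id_A)$'' on the $\mathcal{C}$-side, which is exactly the naturality of $\eta$ together with the defining property of $\mu_{M,M,V}$ as the mate of $ev_{M,V}\circ(id\otimes ev_{M,M})\circ a$. Since equalizers of isomorphic diagrams are canonically isomorphic, I conclude $\operatorname{Hom}_{\mathcal{M}}(U\otimes_A M,V)\cong\operatorname{Hom}_A(U,\underline{\operatorname{Hom}}(M,V))$, and call this isomorphism $\zeta_{U,V}$.

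Finally, naturality of $\zeta_{U,V}$ in $U$ and $V$ follows formally: naturality in $V$ is inherited from naturality of $\eta_{\bullet,M,V}$ in $V$ and functoriality of the equalizer; naturality in $U$ uses that $U\mapsto U\otimes_A M$ is a functor (by the universal property of the co-equalizer) together with naturality of $\eta$ in its first slot. The main obstacle I anticipate is the bookkeeping in the second paragraph — verifying that the two maps in the transported diagram are precisely the pair defining $\operatorname{Hom}_A(U,\underline{\operatorname{Hom}}(M,V))$ — since this is where the associativity constraint $a_{U,A,M}$, the evaluation $ev_{M,M}$, and the composition morphism $\mu_{M,M,V}$ all have to be matched up coherently; everything else is a formal diagram chase. (This is the standard ``$\operatorname{Hom}$-tensor adjunction'' argument adapted to the module-category setting, cf. \cite{Ostrik2003module,Etingof2015tensor}.)
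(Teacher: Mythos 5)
Your proposal is correct and follows essentially the same route as the paper: the paper also realizes $\operatorname{Hom}_{\mathcal{M}}(U\otimes_A M,V)$ as the image of $\pi^\ast$ inside $\operatorname{Hom}_{\mathcal{M}}(U\otimes M,V)$ (equivalently, the equalizer of the two precomposition maps), transports it across $\eta_{\bullet,M,V}$, and verifies via naturality of $\eta$ and the defining property of $\mu_{M,M,V}$ as the mate of $ev_{M,V}\circ(id\otimes ev_{M,M})\circ a$ that the coequalizing condition corresponds exactly to $A$-linearity. The bookkeeping step you flag as the main obstacle is precisely the three-part commutative diagram chase carried out in the paper's proof, so your outline is faithful to it.
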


\begin{proof}
Let $\eta=\eta_{\bullet,M,V}:\operatorname{Hom}_{\mathcal{M}}\left(
\bullet\otimes M,V\right)  \overset{\cong}{\longrightarrow}\operatorname{Hom}%
_{\mathcal{C}}\left(  \bullet,\underline{\operatorname{Hom}}\left(
M,V\right)  \right)  $ be the natural isomorphism (\ref{def-in-hom}). We will
show that the required isomorphism $\zeta_{U,V}$ can be deduced from the
composition
\[
\operatorname{Hom}_{\mathcal{M}}\left(  U\otimes_{A}M,V\right)  \overset
{\pi^{\ast}}{\longrightarrow}\operatorname{Hom}_{\mathcal{M}}\left(  U\otimes
M,V\right)
\tikz[baseline=-.4ex]{ \draw[->] (0,0) -- node[above]{${\text{ }\eta_{U}\text{ }}$}(1cm,0);}\;\operatorname{Hom}%
_{\mathcal{C}}\left(  U,\underline{\operatorname{Hom}}\left(  M,V\right)
\right)  ,
\]
where $\pi:U\otimes M\rightarrow U\otimes_{A}M$ is the natural epimorphism in
$\mathcal{M}$. That is,%
\[
\operatorname{Im}\left(  \eta_{U}\,\pi^{\ast}\right)  =\operatorname{Hom}%
_{A}\left(  U,\underline{\operatorname{Hom}}\left(  M,V\right)  \right)
\subseteq\operatorname{Hom}_{\mathcal{C}}\left(  U,\underline
{\operatorname{Hom}}\left(  M,V\right)  \right)  .
\]
Let $f\in\operatorname{Hom}_{\mathcal{M}}\left(  U\otimes M,V\right)  $. We
have that $f\in\operatorname{Im}\pi^{\ast}$ if and only if the diagram
\begin{equation}%
%TCIMACRO{\TeXButton{Set Parameters}{\def\mleftdelim{.}\def\mrightdelim{.}%
%\def\mrowsep{1cm}\def\mcolumnsep{2cm}}}%
%BeginExpansion
\def\mleftdelim{.}\def\mrightdelim{.}\def\mrowsep{1cm}\def\mcolumnsep{2cm}%
%EndExpansion
\begin{tikzpicture}[scale=1,samples=100,baseline] \matrix (m) [matrix of math nodes,left delimiter={\mleftdelim},right delimiter={\mrightdelim},row sep=\mrowsep,column sep=\mcolumnsep]{ \left( U \otimes A\right) \otimes M & U \otimes M\\ U \otimes\left( A\otimes M\right) & \\ U \otimes M & V\\};\begin{scope}[every node/.style={midway,auto,font=\scriptsize}] \draw[->] (m-1-1) -- node[above] {$q\otimes id$} (m-1-2); \draw[->] (m-1-1) -- node{$a_{U,A,M}$} (m-2-1); \draw[->] (m-2-1) -- node{$id\otimes ev_{M,M}$} (m-3-1); \draw[->] (m-1-2) -- node[left] {$f$}(m-3-2); \draw[->] (m-3-1) -- node[above] {$f$}(m-3-2); \end{scope} \end{tikzpicture}
\end{equation}
commutes, that is, $f\circ\left(  q\otimes id\right)  =f\circ\left(  id\otimes
ev_{M,M}\right)  \circ a_{U,A,M}$. On the other hand, we have that $\eta
_{U}\left(  f\right)  \in\operatorname{Hom}_{A}\left(  U,\underline
{\operatorname{Hom}}\left(  M,V\right)  \right)  $ if and only if diagram
\begin{equation}%
%TCIMACRO{\TeXButton{Set Parameters}{\def\mleftdelim{.}\def\mrightdelim{.}%
%\def\mrowsep{1cm}\def\mcolumnsep{2cm}}}%
%BeginExpansion
\def\mleftdelim{.}\def\mrightdelim{.}\def\mrowsep{1cm}\def\mcolumnsep{2cm}%
%EndExpansion
\begin{tikzpicture}[scale=1,samples=100,baseline]\matrix (m) [matrix of math nodes,left delimiter={\mleftdelim},right delimiter={\mrightdelim},row sep=\mrowsep,column sep=\mcolumnsep]{ U \otimes A & U \\ \underline{\operatorname{Hom}}\left( M,V \right)\otimes A & \underline {\operatorname{Hom}}\left( M,V\right) \\};\begin{scope}[every node/.style={midway,auto,font=\scriptsize}] \draw[->] (m-1-1) -- node[above] {$q$} (m-1-2); \draw[->] (m-1-1) -- node{$\eta_{U}\left(f\right)\otimes id$} (m-2-1); \draw[->] (m-1-2) -- node {$\eta_{U}\left(f\right)$}(m-2-2); \draw[->] (m-2-1) -- node[above] {$\mu_{M,M,V}$}(m-2-2); \end{scope} \end{tikzpicture}
\end{equation}
commutes, i.e. $\mu_{M,M,V}\circ\left(  \eta_{U}\left(  f\right)  \otimes
id\right)  =\eta_{U}\left(  f\right)  \circ q$. Consider the diagram
\begin{equation}%
%TCIMACRO{\TeXButton{Set Parameters}{\def\mleftdelim{.}\def\mrightdelim{.}%
%\def\mrowsep{1cm}\def\mcolumnsep{4cm}}}%
%BeginExpansion
\def\mleftdelim{.}\def\mrightdelim{.}\def\mrowsep{1cm}\def\mcolumnsep{4cm}%
%EndExpansion
\begin{tikzpicture}[scale=1,samples=100,baseline]\matrix (m) [matrix of math nodes,left delimiter={\mleftdelim},right delimiter={\mrightdelim},row sep=\mrowsep,column sep=\mcolumnsep]{ \left( U \otimes A\right) \otimes M & \left( \underline{\operatorname{Hom}}\left( M,V\right) \otimes A\right) \otimes M\\ U \otimes\left( A\otimes M\right) & \underline{\operatorname{Hom}}\left( M,V\right) \otimes\left( A\otimes M\right) \\ U \otimes M & \underline {\operatorname{Hom}}\left( M,V \right) \otimes M\\ & V\\};\begin{scope}[every node/.style={midway,auto,font=\scriptsize}] \draw[->] (m-1-1) -- node[above] {$\left( \eta_{U}\left( f\right) \otimes id\right) \otimes id$} node[below=.5cm]{(I)} (m-1-2); \draw[->] (m-1-1) -- node{$a_{U,A,M}$} (m-2-1); \draw[->] (m-1-2) -- node[left] {$a_{\underline{\operatorname{Hom}}\left( M,V\right) ,A,M}$} (m-2-2); \draw[->] (m-2-1) -- node[above] {$\eta_{U }\left( f\right) \otimes\left( id\otimes id\right) $} node[below=.5cm]{(II)} (m-2-2); \draw[->] (m-2-1) -- node{$id\otimes ev_{M,M}$} (m-3-1); \draw[->] (m-2-2) -- node[left] {$id\otimes ev_{M,M}$}(m-3-2); \draw[->] (m-3-1) -- node[above] {$\eta_{U }\left( f\right) \otimes id$} node[below=.25cm,near end]{(III)} (m-3-2); \draw[->] (m-3-1) -- node[below left] {$f$}(m-4-2); \draw[->] (m-3-2) -- node[left] {$ev_{M,V}$}(m-4-2); \end{scope} \end{tikzpicture}
\end{equation}
The functoriality of $a$ implies the commutativity of the upper rectangle (I).
Since $\otimes$ is a bifunctor, the middle rectangle (II) commutes. By the
definition of the evaluation morphism $ev_{M,V}$ and the functoriality of
$\eta,$ we obtain commutativity of the triangle (III). Hence, the outside
quadrangle commutes, i.e.
\[
ev_{M,V}\circ\left(  id\otimes ev_{M,M}\right)  \circ a_{\underline
{\operatorname{Hom}}\left(  M,V\right)  ,A,M}\circ\left(  \left(  \eta
_{U}\left(  f\right)  \otimes id\right)  \otimes id\right)  =f\circ\left(
id\otimes ev_{M,M}\right)  \circ a_{U,A,M}.
\]
By the definition of $\mu_{M,M,V}$,
\[
\eta_{\underline{\operatorname{Hom}}\left(  M,V\right)  \otimes A}^{-1}\left(
\mu_{M,M,V}\right)  =ev_{M,V}\circ\left(  id\otimes ev_{M,M}\right)  \circ
a_{\underline{\operatorname{Hom}}\left(  M,V\right)  ,A,M},
\]
then
\[
\eta_{\underline{\operatorname{Hom}}\left(  M,V\right)  \otimes A}^{-1}\left(
\mu_{M,M,V}\right)  \circ\left(  \left(  \eta_{U}\left(  f\right)  \otimes
id\right)  \otimes id\right)  =f\circ\left(  id\otimes ev_{M,M}\right)  \circ
a_{U,A,M}.
\]
Because of the functoriality of $\eta$, the diagrams
\begin{equation}%
%TCIMACRO{\TeXButton{Set Parameters}{\def\mleftdelim{.}\def\mrightdelim{.}%
%\def\mrowsep{1cm}\def\mcolumnsep{2cm}}}%
%BeginExpansion
\def\mleftdelim{.}\def\mrightdelim{.}\def\mrowsep{1cm}\def\mcolumnsep{2cm}%
%EndExpansion
\begin{tikzpicture}[scale=1,samples=100,baseline]\matrix (m) [matrix of math nodes,left delimiter={\mleftdelim},right delimiter={\mrightdelim},row sep=\mrowsep,column sep=\mcolumnsep]{ \operatorname{Hom}_{\mathcal{M}}\left( U \otimes M,V\right) & \operatorname{Hom}_{\mathcal{C}}\left( U ,\underline{\operatorname{Hom}}\left( M,V\right) \right) \\ \operatorname{Hom}_{\mathcal{M}}\left( U \otimes A\otimes M,V\right) & \operatorname{Hom}_{\mathcal{C}}\left( U \otimes A,\underline {\operatorname{Hom}}\left( M,V\right) \right) \\}; \begin{scope}[every node/.style={midway,auto,font=\scriptsize}] \draw[->] (m-1-1) -- node[above] {$\eta_{U}$} (m-1-2); \draw[->] (m-1-1) -- node{$\operatorname{Hom}_{\mathcal{M}}\left( q\otimes id,V\right) $} (m-2-1); \draw[->] (m-1-2) -- node{$\operatorname{Hom}_{\mathcal{C}}\left( q,\underline{\operatorname{Hom}}\left( M,V\right) \right) $}(m-2-2); \draw[->] (m-2-1) -- node[above] {$\eta_{U \otimes A}$}(m-2-2); \end{scope} \end{tikzpicture}
\end{equation}
and
\begin{equation}%
%TCIMACRO{\TeXButton{Set Parameters}{\def\mleftdelim{.}\def\mrightdelim{.}%
%\def\mrowsep{1cm}\def\mcolumnsep{2cm}}}%
%BeginExpansion
\def\mleftdelim{.}\def\mrightdelim{.}\def\mrowsep{1cm}\def\mcolumnsep{2cm}%
%EndExpansion
\begin{tikzpicture}[scale=1,samples=100,baseline]\matrix (m) [matrix of math nodes,left delimiter={\mleftdelim},right delimiter={\mrightdelim},row sep=\mrowsep,column sep=\mcolumnsep]{ \operatorname{Hom}_{\mathcal{C}}\left( \underline{\operatorname{Hom}}\left( M,V\right) \otimes A,\underline{\operatorname{Hom}}\left( M,V\right) \right) & \operatorname{Hom}_{\mathcal{M}}\left( \left( \underline{\operatorname{Hom}}\left( M,V\right) \otimes A\right) \otimes M,V\right) \\ \operatorname{Hom}_{\mathcal{C}}\left( U \otimes A,\underline{\operatorname{Hom}}\left( M,V\right) \right) & \operatorname{Hom}_{\mathcal{M}}\left( \left( U \otimes A\right) \otimes M,V\right) \\}; \begin{scope}[every node/.style={midway,auto,font=\scriptsize}] \draw[->] (m-1-1) -- node[above] {$\eta_{\underline{\operatorname{Hom}}\left( M,V\right) \otimes A}^{-1}$} (m-1-2); \draw[->] (m-1-1) -- node{$\operatorname{Hom}_{\mathcal{C}}\left( \eta_{U}\left( f\right) \otimes id,\underline{\operatorname{Hom}}\left( M,V\right) \right) $} (m-2-1) ; \draw[->] (m-1-2) -- node {$\operatorname{Hom}_{\mathcal{M}}\left( \left( \eta _{U}\left( f\right) \otimes id\right) \otimes id,V\right)$}(m-2-2); \draw[->] (m-2-1) -- node[below] {$\eta_{U \otimes A}^{-1}$}(m-2-2); \end{scope} \end{tikzpicture}
\end{equation}
commute. So we have
\[
\eta_{U\otimes A}^{-1}\left(  \eta_{U}\left(  f\right)  \circ q\right)
=f\circ\left(  q\otimes id\right)
\]
and
\begin{align*}
\eta_{U\otimes A}^{-1}\left(  \mu_{M,M,V}\circ\left(  \eta_{U}\left(
f\right)  \otimes id\right)  \right)   &  =\eta_{\underline{\operatorname{Hom}%
}\left(  M,V\right)  \otimes A}^{-1}\left(  \mu_{M,M,V}\right)  \circ\left(
\left(  \eta_{U}\left(  f\right)  \otimes id\right)  \otimes id\right) \\
&  =f\circ\left(  id\otimes ev_{M,M}\right)  \circ a_{U,A,M}.
\end{align*}
Thus $f\in\operatorname{Im}\pi^{\ast}$ if and only if $\eta_{U}\left(
f\right)  \in\operatorname{Hom}_{A}\left(  U,\underline{\operatorname{Hom}%
}\left(  M,V\right)  \right)  $. Then the isomorphism $\zeta_{U,V}%
:\operatorname{Hom}_{\mathcal{M}}\left(  U\otimes_{A}M,V\right)
\rightarrow\operatorname{Hom}_{A}\left(  U,\underline{\operatorname{Hom}%
}\left(  M,V\right)  \right)  $ follows, and the naturality of $\zeta$ in $U$
and $V$ is routine.
\end{proof}

As a summary of Proposition \ref{propiHom&otA} and Lemma \ref{lemOstrik}, we have:

\begin{cor}
\label{corHomiso}Let $\mathcal{M}$ be a semisimple module category over a
fusion category $\mathcal{C}$. Let $M$ be a generator of $\mathcal{M}$, and
$A=\underline{\operatorname{Hom}}\left(  M,M\right)  $. We have canonical
isomorphisms%
\[
\operatorname{Hom}_{\mathcal{M}}\left(  \underline{\operatorname{Hom}}\left(
M,V\right)  \otimes_{A}M,V^{\prime}\right)  \cong\operatorname{Hom}_{A}\left(
\underline{\operatorname{Hom}}\left(  M,V\right)  ,\underline
{\operatorname{Hom}}\left(  M,V^{\prime}\right)  \right)  \cong
\operatorname{Hom}_{\mathcal{M}}\left(  V,V^{\prime}\right)  ,
\]
for any $V,V^{\prime}\in\mathcal{M}$.
\end{cor}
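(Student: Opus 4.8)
The plan is to assemble the corollary from the two results just proved, essentially as a chain of natural isomorphisms. First I would invoke Lemma~\ref{lemOstrik}: since $\mathcal{M}$ is a semisimple module category over the fusion category $\mathcal{C}$ and $M$ is a generator, $A=\underline{\operatorname{Hom}}\left(  M,M\right)  $ is a semisimple algebra in $\mathcal{C}$ and the functor $F=\underline{\operatorname{Hom}}\left(  M,\bullet\right)  :\mathcal{M}\rightarrow\mathrm{Mod}_{\mathcal{C}}(A)$ is an equivalence of $\mathcal{C}$-module categories. In particular $F$ is fully faithful, so for all $V,V'\in\mathcal{M}$ we get the rightmost isomorphism
\[
\operatorname{Hom}_{A}\left(  \underline{\operatorname{Hom}}\left(  M,V\right)  ,\underline{\operatorname{Hom}}\left(  M,V'\right)  \right)  \cong\operatorname{Hom}_{\mathcal{M}}\left(  V,V'\right)  ,
\]
natural in both variables.

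Next I would apply Proposition~\ref{propiHom&otA} with the right $A$-module $U=\underline{\operatorname{Hom}}\left(  M,V\right)  $ (which is a right $A$-module in $\mathcal{C}$ via $\mu_{M,M,V}$, as recalled in the preliminaries). This gives the canonical isomorphism
\[
\zeta_{\underline{\operatorname{Hom}}\left(  M,V\right)  ,V'}:\operatorname{Hom}_{\mathcal{M}}\left(  \underline{\operatorname{Hom}}\left(  M,V\right)  \otimes_{A}M,V'\right)  \xrightarrow{\ \cong\ }\operatorname{Hom}_{A}\left(  \underline{\operatorname{Hom}}\left(  M,V\right)  ,\underline{\operatorname{Hom}}\left(  M,V'\right)  \right)  ,
\]
natural in $U$ and $V'$, hence in particular natural in $V$ and $V'$ after precomposing the $U$-naturality with $V\mapsto\underline{\operatorname{Hom}}\left(  M,V\right)  =F(V)$. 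Composing $\zeta$ with the isomorphism from the previous paragraph yields the asserted chain, and all maps involved are canonical, so the composite is natural in $V$ and $V'$. That is the whole argument.

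I do not expect a genuine obstacle here, since the corollary is explicitly advertised as a summary of Proposition~\ref{propiHom&otA} and Lemma~\ref{lemOstrik}. The only point demanding a line of care is making sure the functoriality statements match up: Proposition~\ref{propiHom&otA} gives naturality of $\zeta_{U,V'}$ in the right $A$-module $U$, and one must observe that $V\mapsto\underline{\operatorname{Hom}}\left(  M,V\right)$ is precisely the functor $F$ valued in $\mathrm{Mod}_{\mathcal{C}}(A)$, so restricting $U$-naturality along $F$ delivers naturality in $V$; combined with the bifunctoriality of $\operatorname{Hom}_{A}$ and the $V,V'$-naturality built into fully-faithfulness of $F$, the composite is natural in both arguments. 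One might also remark that semisimplicity of $\mathcal{M}$ and $A$ guarantees $\underline{\operatorname{Hom}}\left(  M,V\right)  \otimes_{A}M$ exists as an object of $\mathcal{M}$ (the relevant coequalizer exists), so the left-hand Hom-set is well defined, but this is immediate from the setup in Section~\ref{section-mod-cat-general}.
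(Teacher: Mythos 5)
Your proposal is correct and follows exactly the route the paper intends: the left isomorphism is Proposition~\ref{propiHom&otA} applied to the right $A$-module $U=\underline{\operatorname{Hom}}\left(M,V\right)$, and the right isomorphism is full faithfulness of the equivalence $F=\underline{\operatorname{Hom}}\left(M,\bullet\right)$ from Lemma~\ref{lemOstrik}. The paper states the corollary precisely as a summary of these two results, so there is nothing further to check.
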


In \cite[Theorem 3.1]{Ostrik2003module} and \cite[Theorem 7.10.1]%
{Etingof2015tensor}, all semisimple module categories $\mathcal{M}$ with a
generator $M$ over a fusion category $\mathcal{C}$ are characterized. The
result was listed in Lemma \ref{lemOstrik}. Let $A=\underline
{\operatorname{Hom}}\left(  M,M\right)  $, then the functor $F=\underline
{\operatorname{Hom}}\left(  M,\bullet\right)  :\mathcal{M}\rightarrow
\mathrm{Mod}_{\mathcal{C}}(A)$ gives an equivalence of $\mathcal{C}$-module categories.

With the help of Proposition~\ref{propiHom&otA}, we can present a
quasi-inverse of $F$.

\begin{thm}
\label{theorem-G-is-qinv}Let $\mathcal{M}$ be a $k$-linear abelian semisimple
module category over a fusion category $\mathcal{C}$. If $M$ is a generator of
$\mathcal{M}$, then $A=\underline{\operatorname{Hom}}\left(  M,M\right)  $ is
a semisimple algebra in $\mathcal{C}$, and the functor $G=\bullet\otimes
_{A}M:\mathrm{Mod}_{\mathcal{C}}(A)\rightarrow\mathcal{M}$ is a quasi-inverse
to the equivalence $F=\underline{\operatorname{Hom}}\left(  M,\bullet\right)
:\mathcal{M}\rightarrow\mathrm{Mod}_{\mathcal{C}}(A)$. Moreover, if
$V\in\mathcal{M}$ and $U\in\mathrm{Mod}_{\mathcal{C}}(A)$, then

\begin{enumerate}
\item $\underline{\operatorname{Hom}}\left(  M,V\right)  \otimes_{A}M\cong V$
as objects of $\mathcal{M}$, $U\cong{}\underline{\operatorname{Hom}}\left(
M,U\otimes_{A}M\right)  $ as right $A$-modules in $\mathcal{C}$.

\item $V$ is a simple object of $\mathcal{M}$ if and only if $\underline
{\operatorname{Hom}}\left(  M,V\right)  $ is a simple object of $\mathrm{Mod}%
_{\mathcal{C}}(A)$.

\item $U$ is a simple object of $\mathrm{Mod}_{\mathcal{C}}(A)$ if and only if
$U\otimes_{A}M$ is a simple object of $\mathcal{M}$.
\end{enumerate}
\end{thm}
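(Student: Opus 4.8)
The plan is to deduce everything from the adjunction established in Proposition~\ref{propiHom&otA} together with Ostrik's theorem (Lemma~\ref{lemOstrik}). First I would recall that Lemma~\ref{lemOstrik} tells us $A = \underline{\operatorname{Hom}}(M,M)$ is a semisimple algebra in $\mathcal{C}$ and that $F = \underline{\operatorname{Hom}}(M,\bullet) : \mathcal{M} \to \mathrm{Mod}_{\mathcal{C}}(A)$ is an equivalence of $\mathcal{C}$-module categories. To show $G = \bullet \otimes_A M$ is a quasi-inverse, the key observation is that Proposition~\ref{propiHom&otA} gives a natural isomorphism $\operatorname{Hom}_{\mathcal{M}}(U \otimes_A M, V) \cong \operatorname{Hom}_A(U, \underline{\operatorname{Hom}}(M,V)) = \operatorname{Hom}_A(U, F(V))$, i.e. $G$ is left adjoint to $F$. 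Since $F$ is already known to be an equivalence, its left adjoint $G$ is automatically a quasi-inverse: a left adjoint to an equivalence is canonically isomorphic to its (unique up to isomorphism) inverse. This gives the "moreover" isomorphisms in part~1 for free, namely $G F(V) = \underline{\operatorname{Hom}}(M,V) \otimes_A M \cong V$ in $\mathcal{M}$ and $F G(U) = \underline{\operatorname{Hom}}(M, U \otimes_A M) \cong U$ in $\mathrm{Mod}_{\mathcal{C}}(A)$, with the unit and counit of the adjunction supplying the natural isomorphisms.

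Alternatively, if one prefers a more self-contained route, one can argue directly: Corollary~\ref{corHomiso} already gives $\operatorname{Hom}_{\mathcal{M}}(\underline{\operatorname{Hom}}(M,V) \otimes_A M, V') \cong \operatorname{Hom}_{\mathcal{M}}(V, V')$ naturally in $V'$, so by Yoneda $\underline{\operatorname{Hom}}(M,V) \otimes_A M \cong V$, i.e. $GF \cong \mathrm{id}_{\mathcal{M}}$. For $FG \cong \mathrm{id}$, since $F$ is an equivalence it is in particular faithful and full, and one checks that the natural transformation $U \to FG(U)$ obtained from the adjunction unit becomes an isomorphism after applying the already-established $GF \cong \mathrm{id}$; because $F$ reflects isomorphisms, $U \to FG(U)$ is itself an isomorphism. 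Either way one concludes $F$ and $G$ are mutually quasi-inverse equivalences.

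Parts~2 and~3 are then immediate formal consequences: an equivalence of abelian categories preserves and reflects simple objects. For part~2, $V$ is simple in $\mathcal{M}$ iff $F(V) = \underline{\operatorname{Hom}}(M,V)$ is simple in $\mathrm{Mod}_{\mathcal{C}}(A)$, since $F$ is an equivalence. For part~3, $U$ simple iff $G(U) = U \otimes_A M$ simple, since $G$ is an equivalence; one can also see this by combining part~2 with the isomorphism $U \cong F(G(U))$ from part~1. I would present parts~2 and~3 in one or two sentences each, citing only that equivalences of abelian categories are exact and preserve the lattice of subobjects.

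The main obstacle is purely bookkeeping rather than conceptual: one must make sure that the adjunction isomorphism $\zeta_{U,V}$ of Proposition~\ref{propiHom&otA} is compatible with the $\mathcal{C}$-module structure (so that $G$ is genuinely a functor of $\mathcal{C}$-module categories and not merely of abelian categories) and that $G$ is additive and exact, so that it restricts correctly to simple objects; exactness of $\bullet \otimes_A M$ follows from semisimplicity of $A$ in $\mathcal{C}$ (every short exact sequence of $A$-modules splits), which is guaranteed by the first assertion of Lemma~\ref{lemOstrik}. I expect the verification that $G$ respects the module-category structure to be the most delicate point, but it is routine given that $\zeta$ is natural in $U$ and that the coequalizer defining $U \otimes_A M$ is functorial.
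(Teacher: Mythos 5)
Your proposal is correct, and it rests on the same two pillars as the paper's proof: Lemma~\ref{lemOstrik} (semisimplicity of $A$ and the fact that $F$ is an equivalence) and Proposition~\ref{propiHom&otA} (the isomorphism $\zeta_{U,V}$). The difference is in the packaging of the formal category theory. The paper first proves part~1 directly --- it deduces $\underline{\operatorname{Hom}}(M,V)\otimes_{A}M\cong V$ from Corollary~\ref{corHomiso} together with semisimplicity of $\mathcal{M}$ (so that it only needs an isomorphism of Hom-spaces against every $V'$, not naturality in $V'$), and obtains $U\cong\underline{\operatorname{Hom}}(M,U\otimes_{A}M)$ by writing $U=\underline{\operatorname{Hom}}(M,V')$ via essential surjectivity of $F$ --- and then constructs the unit $\theta_{U}=\zeta_{U,U\otimes_{A}M}(id)$ explicitly and verifies its naturality by a diagram chase. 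You instead read Proposition~\ref{propiHom&otA} as saying that $G$ is left adjoint to $F$ and invoke uniqueness of adjoints: a left adjoint to an equivalence is automatically a quasi-inverse. That is a legitimate and shorter route, and it delivers the isomorphisms of part~1 as the unit and counit of the adjunction; what the paper's hands-on construction buys is an explicit formula for the natural isomorphism $id\to FG$. One small point to watch in your alternative Yoneda argument: it requires the isomorphism of Corollary~\ref{corHomiso} to be natural in $V'$, which does hold (it is assembled from $\zeta$ and Ostrik's equivalence), but this is precisely the naturality that the paper's semisimplicity argument is designed to sidestep. Parts~2 and~3 are, as you say, formal consequences of having an equivalence of abelian categories; the paper likewise leaves them implicit. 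Your worry about $G$ being a $\mathcal{C}$-module functor is not needed for the statement as given, since the theorem only asserts that $G$ is a quasi-inverse of $F$ as a functor, and the paper does not verify the module-functor structure on $G$ either.
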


\begin{proof}
Since $\mathcal{M}$ is semisimple, $\operatorname{Hom}_{\mathcal{M}}\left(
\underline{\operatorname{Hom}}\left(  M,V\right)  \otimes_{A}M,V^{\prime
}\right)  \cong\operatorname{Hom}_{\mathcal{M}}\left(  V,V^{\prime}\right)  $
for all $V^{\prime}\in\mathcal{M}$ implies that $\underline{\operatorname{Hom}%
}\left(  M,V\right)  \otimes_{A}M\cong V$. By Lemma \ref{lemOstrik}, $U$ is of
the form $\underline{\operatorname{Hom}}\left(  M,V^{\prime}\right)  $, for
some $V^{\prime}\in\mathcal{M}$. Then we have%
\[
U\cong\underline{\operatorname{Hom}}\left(  M,V^{\prime}\right)
\cong\underline{\operatorname{Hom}}\left(  M,\underline{\operatorname{Hom}%
}\left(  M,V^{\prime}\right)  \otimes_{A}M\right)  \cong\underline
{\operatorname{Hom}}\left(  M,U\otimes_{A}M\right)  .
\]
By Proposition \ref{propiHom&otA}, there exists a natural isomorphism
\[
\zeta_{U,V}:\operatorname{Hom}_{\mathcal{M}}\left(  U\otimes_{A}M,V\right)
\rightarrow\operatorname{Hom}_{A}\left(  U,\underline{\operatorname{Hom}%
}\left(  M,V\right)  \right)  .
\]
Let $\theta_{U}=\zeta_{U,U\otimes_{A}M}\left(  id_{U\otimes_{A}M}\right)
\in\operatorname{Hom}_{A}\left(  U,\underline{\operatorname{Hom}}\left(
M,U\otimes_{A}M\right)  \right)  $. We claim that $\theta:$ $id_{\mathrm{Mod}%
_{\mathcal{C}}(A)}\rightarrow FG$ is a natural isomorphism. It is clear that
$\theta_{U}$ is an isomorphism. It suffices to show that the diagram
\begin{equation}%
\def\mleftdelim{.}\def\mrightdelim{.}\def\mrowsep{1cm}\def\mcolumnsep{2cm}%
\begin{tikzpicture}[scale=1,samples=100,baseline]\matrix (m) [matrix of math nodes,left delimiter={\mleftdelim},right delimiter={\mrightdelim},row sep=\mrowsep,column sep=\mcolumnsep]{ U & \underline{\operatorname{Hom}}\left( M,U\otimes _{A}M\right) \\ U^{\prime}& \underline{\operatorname{Hom}}\left( M,U^{\prime}\otimes _{A}M\right) \\}; \begin{scope}[every node/.style={midway,auto,font=\scriptsize}] \draw[->] (m-1-1) -- node[above] {$\theta_U$} (m-1-2); \draw[->] (m-1-1) -- node{$f$} (m-2-1); \draw[->] (m-1-2) -- node {$\underline{\operatorname{Hom}}\left( M,f\otimes _{A}id\right) $}(m-2-2); \draw[->] (m-2-1) -- node[above] {$\theta_{U^\prime}$}(m-2-2); \end{scope} \end{tikzpicture} \label{Nat-thetaU}%
\end{equation}
commutes, for any morphism $f:U\rightarrow U^{\prime}$ in $\mathrm{Mod}%
_{\mathcal{C}}(A)$. Indeed, this follows from the commutativity of the
following two diagrams,%

\begin{equation}%
\def\mleftdelim{.}\def\mrightdelim{.}\def\mrowsep{1cm}\def\mcolumnsep{2cm}%
\begin{tikzpicture}[scale=1,samples=100,baseline]\matrix (m) [matrix of math nodes,left delimiter={\mleftdelim},right delimiter={\mrightdelim},row sep=\mrowsep,column sep=\mcolumnsep]{ \operatorname{Hom}_{\mathcal{M}}\left( U\otimes _{A}M,U\otimes _{A}M\right) &\operatorname{Hom}_{A}\left( U,\underline{\operatorname{Hom}}\left( M,U\otimes _{A}M\right) \right) \\ \operatorname{Hom}_{\mathcal{M}}\left( U\otimes _{A}M,U^{\prime }\otimes _{A}M\right)& \operatorname{Hom}_{A}\left( U,\underline{\operatorname{Hom}}\left( M,U^{\prime }\otimes _{A}M\right) \right) \\ }; \begin{scope}[every node/.style={midway,auto,font=\scriptsize}] \draw[->] (m-1-1) -- node[above] {$\zeta_{U,U\otimes_AM}$} (m-1-2); \draw[->] (m-1-1) -- node[left] {$\operatorname{Hom}_{\mathcal{M}}\left( U\otimes _{A}M,f\otimes _{A}id\right) $} (m-2-1); \draw[->] (m-1-2) -- node {$\operatorname{Hom}_{A}\left( U,\underline{\operatorname{Hom}}\left( M,f\otimes _{A}id\right) \right)$}(m-2-2); \draw[->] (m-2-1) -- node[above] {$\zeta_{U,U^{\prime}\otimes_AM}$}(m-2-2); \end{scope} \end{tikzpicture} \label{thetaU}%
\end{equation}%
\def\mleftdelim{.}\def\mrightdelim{.}\def\mrowsep{1cm}\def\mcolumnsep{2cm}%
\begin{equation}%
\def\mleftdelim{.}\def\mrightdelim{.}\def\mrowsep{1cm}\def\mcolumnsep{2cm}%
\begin{tikzpicture}[scale=1,samples=100,baseline]\matrix (m) [matrix of math nodes,left delimiter={\mleftdelim},right delimiter={\mrightdelim},row sep=\mrowsep,column sep=\mcolumnsep]{ \operatorname{Hom}_{\mathcal{M}}\left( U^{\prime }\otimes _{A}M,U^{\prime }\otimes _{A}M\right)& \operatorname{Hom}_{A}\left( U^{\prime },\underline{\operatorname{Hom}}\left( M,U^{\prime }\otimes _{A}M\right) \right)\\ \operatorname{Hom}_{\mathcal{M}}\left( U\otimes _{A}M,U^{\prime }\otimes _{A}M\right)& \operatorname{Hom}_{A}\left( U,\underline{\operatorname{Hom}}\left( M,U^{\prime }\otimes _{A}M\right) \right) \\ }; \begin{scope}[every node/.style={midway,auto,font=\scriptsize}] \draw[->] (m-1-1) -- node[above] {$\zeta_{U^{\prime},U^{\prime}\otimes_AM}$} (m-1-2); \draw[->] (m-2-1) -- node[above] {$\zeta_{U,U^{\prime}\otimes_AM}$}(m-2-2); \draw[->] (m-1-1) -- node[left] {$\operatorname{Hom}_{\mathcal{M}}\left( f\otimes _{A}id,U^{\prime }\otimes _{A}M\right)$}(m-2-1); \draw[->] (m-1-2) -- node[right] {$\operatorname{Hom}_{A}\left( f,\underline{\operatorname{Hom}}\left( M,U^{\prime }\otimes _{A}M\right) \right)$}(m-2-2); \end{scope} \end{tikzpicture} \label{thetaUP}%
\end{equation}
The down-right images of the identity maps in diagrams (\ref{thetaU}) and
(\ref{thetaUP}) are the same; the right-down image of the identity map in
diagram (\ref{thetaU}) is $\underline{\operatorname{Hom}}\left(
M,f\otimes_{A}id\right)  \circ\theta_{U}$, and the right-down image of the
identity map in diagram (\ref{thetaUP}) is $\theta_{U^{\prime}}\circ f$, thus
$\underline{\operatorname{Hom}}\left(  M,f\otimes_{A}id\right)  \circ
\theta_{U}=\theta_{U^{\prime}}\circ f$. This completes the commutativity of
the diagram (\ref{Nat-thetaU}). Hence $G$ is a quasi-inverse of $F$.
\end{proof}

Let $\left(  H,R\right)  $ be a quasi-triangular Hopf algebra, and let
$\mathcal{C}={}_{H^{cop}}\mathcal{M}$ be the category of left $H^{cop}%
$-modules. Then the category $_{H}^{H_{R}}\mathcal{M}$ has a structure of
module category over $\mathcal{C}$ by considering $X\otimes M$ as an object of
$_{H}^{H_{R}}\mathcal{M}$ via the $H$-action
\[
h\left(  x\otimes m\right)  =\sum h_{\left(  2\right)  }x\otimes h_{\left(
1\right)  }m,\quad\forall h\in H,x\in X,m\in M,
\]
and the left $H_{R}$-coaction on the right tensorand $M$, for any $X\in
{}\mathcal{C},M\in\mathcal{M}$.

Assume further that $H$ is semisimple and cosemisimple. Let%
\[
H=D_{1}\oplus\cdots\oplus D_{r}%
\]
be the decomposition in Proposition \ref{propHdecomp}, where $D_{1}%
,\ldots,D_{r}$ are irreducible Yetter-Drinfeld submodules of $H$, which are
also the minimal $H $-adjoint-stable subcoalgebras of $H_{R}$. For any $V\in
{}_{H}^{H_{R}}\mathcal{M}$, set $V_{i}=\left\{  v\in V\mid\rho_{R}\left(
v\right)  \in D_{i}\otimes V\right\}  ,$ $1\leq i\leq r.$ Clearly,
$V=\oplus_{i=1}^{r}V_{i}.$ Since $D_{i}\subseteq H$ is an $H$-submodule, by
(\ref{eqb}) we have $V_{i}\in{}_{H}^{D_{i}}\mathcal{M}$, the category of
$D_{i}$-comodule in $_{H}\mathcal{M}$. Thus the category ${}_{H}%
^{H}\mathcal{YD}={}_{H}^{H_{R}}\mathcal{M}$ is a direct sum of $\mathcal{C}%
$-module subcategories%
\begin{equation}
_{H}^{H_{R}}\mathcal{M=}{}_{H}^{D_{1}}\mathcal{M}\oplus\cdots\oplus{}%
_{H}^{D_{r}}\mathcal{M}. \label{decomp HRHM}%
\end{equation}
Moreover, we will see that each direct summand is an indecomposable
$\mathcal{C}$-module category.

\begin{lem}
\label{lemDHMirr}Let $D$ be a minimal $H$-adjoint-stable subcoalgebra of
$H_{R}.$ Then $_{H}^{D}\mathcal{M}$ is an indecomposable module category over
$\mathcal{C}={}_{H^{cop}}\mathcal{M}$.
\end{lem}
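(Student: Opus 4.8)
The plan is to show that $_{H}^{D}\mathcal{M}$ cannot be written as a direct sum of two nonzero $\mathcal{C}$-module subcategories. First I would unwind what a $\mathcal{C}$-module subcategory decomposition means: $_{H}^{D}\mathcal{M} = \mathcal{M}' \oplus \mathcal{M}''$ with $\mathcal{M}', \mathcal{M}''$ closed under the $\mathcal{C}$-action $X \otimes (-)$ for $X \in {}_{H^{cop}}\mathcal{M}$. The key object to track is $H \otimes W$ for $W$ a minimal left coideal of $D$ (or more generally a nonzero $D$-comodule $W$), which by the discussion preceding the lemma lies in $_{H}^{D}\mathcal{M}$; in fact I expect $M = H \otimes W$ to be a \emph{generator} of $_{H}^{D}\mathcal{M}$, and indecomposability will follow once I show that $M$ generates and that $M$ cannot split nontrivially across a $\mathcal{C}$-module decomposition.

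The core computation is this. Take $W$ a minimal left coideal of $D$ and consider $M = H \otimes W \in {}_{H}^{D}\mathcal{M}$, with $H$-action $h(x \otimes w) = \sum h_{(2)} x \otimes h_{(1)}\cdot_{ad} w$ and $D$-coaction on the $W$-leg. I would argue that $M$ generates $_{H}^{D}\mathcal{M}$: every object $V \in {}_{H}^{D}\mathcal{M}$ is a quotient (or subobject, using semisimplicity) of a direct sum of copies of $M$, because the cofree-type comodule $H \otimes W$ surjects onto the relevant building blocks once one uses that $D$ is the minimal $H$-adjoint-stable subcoalgebra containing $W$ and that $\mathcal{C}$ contains $H$ itself (so tensoring $W$ up by $H$ is a legitimate $\mathcal{C}$-action applied to the "trivial" piece). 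Then, for indecomposability as a module category, suppose $_{H}^{D}\mathcal{M} = \mathcal{M}_1 \oplus \mathcal{M}_2$ as $\mathcal{C}$-module categories with both summands nonzero. Since each $\mathcal{M}_j$ is itself a semisimple $\mathcal{C}$-module category and $M$ generates the whole category, writing $M = M_1 \oplus M_2$ with $M_j \in \mathcal{M}_j$, at least one $M_j$, say $M_1$, is nonzero. I would then show $M_1$ already generates all of $_{H}^{D}\mathcal{M}$ — equivalently that $\underline{\operatorname{Hom}}(M_1, M_2) \ne 0$ would force a contradiction with the direct-sum assumption — by exhibiting a nonzero morphism in $_{H}^{D}\mathcal{M}$ from some $X \otimes M_1$ to $M_2$ using the coaction/action structure, hence $M_2$ lies in the subcategory generated by $M_1$, i.e. in $\mathcal{M}_1$, so $\mathcal{M}_2 = 0$.

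Concretely, the non-splitting argument should use that $D$ is a \emph{coalgebra} and that $M = H \otimes W$ has its coaction landing in $D \otimes M$ with $D$ itself being $H$-adjoint-generated by the coefficients of $W$: the comultiplication $\Delta_R$ restricted to $D$, together with the adjoint action, links any minimal coideal of $D$ to any other, so no nontrivial $H$-stable "block" of the coalgebra $D$ can separate summands of $M$. This is where the minimality hypothesis on $D$ is essential and is the main obstacle: I must translate "$D$ has no proper nonzero $H$-adjoint-stable subcoalgebra" into "$_{H}^{D}\mathcal{M}$ has no nontrivial $\mathcal{C}$-module decomposition." The cleanest route is probably to invoke Proposition~\ref{propiHom&otA}/Corollary~\ref{corHomiso} and Lemma~\ref{lemOstrik}: a $\mathcal{C}$-module decomposition of $_{H}^{D}\mathcal{M}$ would, via $F = \underline{\operatorname{Hom}}(M, -)$, correspond to a decomposition of the algebra $A = \underline{\operatorname{Hom}}(M, M)$ in $\mathcal{C}$ as a direct sum of two-sided ideals that are themselves $\mathcal{C}$-subobjects; I would then identify $A$ explicitly (as in the introduction, $A \cong W^{\ast} \,\square_{D}\, (H \otimes W)$ in the $\mathrm{Vec}_k$ case, or $\operatorname{Hom}^{D}(H\otimes W, H\otimes W)$ in the $_{H^{cop}}\mathcal{M}$ case) and show that the centralizer/comodule structure forces $A$ to have no such decomposition precisely because $D$ is $H$-adjoint-simple as a coalgebra. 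Verifying that last algebraic statement — that $A$ is "connected" as a $\mathcal{C}$-algebra — is the step I expect to require the most care, and it is where the cosemisimplicity of $H_R$ (Proposition~\ref{propcos.s}) and the explicit form of $\Delta_R$ get used.
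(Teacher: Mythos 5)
There is a genuine gap: the entire content of the lemma is concentrated in the step you defer. You reduce the problem to showing either that a nonzero summand $M_1$ of $M=H\otimes W$ generates the whole category across a putative decomposition, or that the algebra $A=\underline{\operatorname{Hom}}(M,M)$ admits no decomposition into two-sided ideals in $\mathcal{C}$, but in both cases you only describe what would need to be exhibited ("a nonzero morphism from some $X\otimes M_1$ to $M_2$ using the coaction/action structure", "$A$ is connected as a $\mathcal{C}$-algebra") without producing it. That is precisely the point where the minimality of $D$ must enter, and no mechanism is given. There is also a circularity hazard in your setup: in this paper, the fact that $H\otimes W$ generates $_{H}^{D}\mathcal{M}$ as a module category over $\mathcal{C}={}_{H^{cop}}\mathcal{M}$ is \emph{deduced from} indecomposability via Ostrik's theorem (see the proof of Proposition~\ref{prop-C=HM-equiva}), not proved independently; and even granting generation, a decomposable module category can perfectly well have a single generator (the direct sum of generators of the blocks), so "generation plus non-splitting of $M$" still leaves the non-splitting claim as the whole problem.

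The paper's actual argument is shorter and avoids $H\otimes W$ and the algebra $A$ entirely. Suppose $_{H}^{D}\mathcal{M}=\mathcal{M}_1\oplus\mathcal{M}_2$ as $\mathcal{C}$-module subcategories; every object and every morphism splits compatibly. The object $D$ itself lies in $_{H}^{D}\mathcal{M}$ and is irreducible there (Proposition~\ref{PropSubmodofH}), so it lies entirely in one block, say $D\in\mathcal{M}_1$. Now for an arbitrary $M\in{}_{H}^{D}\mathcal{M}$, regard $M$ as an object of $\mathcal{C}$ by forgetting the coaction; then $M\otimes D\in\mathcal{M}_1$ because $\mathcal{M}_1$ is closed under the $\mathcal{C}$-action. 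The flipped coaction $\alpha_M:M\to M\otimes D$, $m\mapsto\sum m^{\langle 0\rangle}\otimes m^{\langle -1\rangle}$, is a morphism in $_{H}^{D}\mathcal{M}$ and is injective by counitality, so $\alpha_M(M_2)\subseteq M\otimes D\in\mathcal{M}_1$ forces $M_2=0$. The key ideas you are missing are (i) using $D$ itself, rather than $H\otimes W$, as the test object, and (ii) recognizing the coaction of any object as a morphism in the module category landing in the block containing $D$. Your closing remark about $\Delta_R$ "linking minimal coideals of $D$" gestures toward (ii) but is never turned into a morphism-level argument.
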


\begin{proof}
The proof is similar to that of \cite[Proposition 1.18]%
{andruskiewitsch2007module}. Assume that $_{H}^{D}\mathcal{M}=\mathcal{M}%
_{1}\oplus\mathcal{M}_{2},$ where $\mathcal{M}_{1},\mathcal{M}_{2}$ are
$_{H^{cop}}\mathcal{M}$-module subcategories of $_{H}^{D}\mathcal{M}.$ If
$M\in{}_{H}^{D}\mathcal{M}$ then there exist $M_{1}\in\mathcal{M}_{1},M_{2}%
\in\mathcal{M}_{2}$ such that $M=M_{1}\oplus M_{2}.$ If $\alpha:M\rightarrow
N$ is a morphism in $_{H}^{D}\mathcal{M}$ then $\alpha\left(  M_{1}\right)
\subseteq N_{1},\alpha\left(  M_{2}\right)  \subseteq N_{2}.$

Then $D=D_{1}\oplus D_{2},$ for some $D_{1}\in\mathcal{M}_{1}$, $D_{2}%
\in\mathcal{M}_{2}.$ Since $D$ is irreducible, we may assume that $D=D_{1}%
\in\mathcal{M}_{1}$. For arbitrary $M\in{}_{H}^{D}\mathcal{M}$, then
$M=M_{1}\oplus M_{2}$ for some $M_{1}\in\mathcal{M}_{1},M_{2}\in
\mathcal{M}_{2}$. Note that $M\in\mathcal{M}\subseteq\mathcal{C}$,
$D\in\mathcal{M}$, then $M\otimes D\in\mathcal{M}$ by the $\mathcal{C}$-module
category structure on $\mathcal{M}$. Obviously, the map $\alpha_{M}%
:M\rightarrow M\otimes D,$ $m\mapsto\sum m^{\left\langle 0\right\rangle
}\otimes m^{\left\langle -1\right\rangle }$ is a morphism in $_{H}%
^{D}\mathcal{M}$, so $\alpha_{M}\left(  M_{1}\right)  \in\mathcal{M}_{1}$,
$\alpha_{M}\left(  M_{2}\right)  \in\mathcal{M}_{2}$. Since $M\in\mathcal{C}$,
we have $M\otimes D$ is an object in $\mathcal{M}_{1}$. Then $\alpha
_{M}\left(  M\right)  \subseteq M\otimes D\in\mathcal{M}_{1}$, hence
$\alpha_{M}\left(  M_{2}\right)  =0$ and thus $M_{2}=0.$ Therefore
${}\mathcal{M}_{2}$ is trivial, and $_{H}^{D}\mathcal{M}$ is indecomposable.
\end{proof}

\section{Irreducible Yetter-Drinfeld Modules}

\label{section-YDmodule}In this section, we assume that $(H,R)$ is a
semisimple and cosemisimple quasi-triangular Hopf algebra over $k.$ We will
classify all the irreducible objects of ${}_{H}^{H}\mathcal{YD}.$

Let $W$ be a left $H_{R}$-comodule, then $H\otimes W$ is also a left $H_{R}%
$-comodule via
\begin{equation}
{\rho_{R}}(h\otimes w)=\sum h_{(1)}\cdot_{ad}w^{\langle-1\rangle}\otimes
h_{(2)}\otimes w^{\langle0\rangle}, \label{eqe}%
\end{equation}
for all $h\in H$, $w\in W$. Then (\ref{eqc}) implies that $\rho_{R}$ is a
coaction. Furthermore, $H\otimes W$ is an object of $_{H}^{H_{R}}\mathcal{M}$
with left $H$-action given by the multiplication of $H$ on the left tensorand
$H$.

Let $D$ be a minimal $H$-adjoint-stable subcoalgebra of $H_{R}$. For a left
$D$-comodule $W$, $H\otimes W$ is in the subcategory $_{H}^{D}\mathcal{M}$ of
${}_{H}^{H_{R}}\mathcal{M}$. We will use the object $H\otimes W$ to
characterize the category $_{H}^{D}\mathcal{M}$.~

\subsection{Structure of $_{H}^{D}\mathcal{M}$ as a module category over
$\mathcal{C}={}_{H^{cop}}\mathcal{M}$}

By Lemma \ref{lemDHMirr}, $\mathcal{M}={}_{H}^{D}\mathcal{M}$ is an
indecomposable module subcategory of $_{H}^{H_{R}}\mathcal{M}$ over
$\mathcal{C={}}_{H^{cop}}\mathcal{M}$. We apply Theorem
\ref{theorem-G-is-qinv} to the module category $_{H}^{D}\mathcal{M}$ in this subsection.

For two objects $M_{1},M_{2}\in{}_{H}^{D}\mathcal{M}$, $\operatorname{Hom}%
^{D}\left(  M_{1},M_{2}\right)  $ is obviously an object of $_{H^{cop}%
}\mathcal{M}$, with the left $H$-action determined by
\[
\left(  h\cdot f\right)  \left(  m_{1}\right)  =\sum h_{\left(  2\right)
}f\left(  \left(  \bar{S}h_{\left(  1\right)  }\right)  m_{1}\right)  ,
\]
where $h\in H$, $f\in\operatorname{Hom}^{D}\left(  M_{1},M_{2}\right)  $,
$m_{1}\in M_{1}$.

For $X\in{}_{H^{cop}}\mathcal{M}$, the restriction of the canonical
isomorphism
\[
\operatorname{Hom}\left(  X,\operatorname{Hom}\left(  M_{1},M_{2}\right)
\right)  \cong\operatorname{Hom}\left(  X\otimes M_{1},M_{2}\right)
\]
on $\operatorname{Hom}_{H}\left(  X,\operatorname{Hom}^{D}\left(  M_{1}%
,M_{2}\right)  \right)  $ induces a natural isomorphism
\[
\operatorname{Hom}_{H}\left(  X,\operatorname{Hom}^{D}\left(  M_{1}%
,M_{2}\right)  \right)  \cong\operatorname{Hom}_{H}^{D}\left(  X\otimes
M_{1},M_{2}\right)  ,
\]
so the functor $X\mapsto$ $\operatorname{Hom}_{H}^{D}\left(  X\otimes
M_{1},M_{2}\right)  $ is representable by $\operatorname{Hom}^{D}\left(
M_{1},M_{2}\right)  $. Thus the internal Hom $\underline{\operatorname{Hom}%
}\left(  M_{1},M_{2}\right)  =\operatorname{Hom}^{D}\left(  M_{1}%
,M_{2}\right)  $. It's not difficult to verify that the evaluation map
$ev_{M_{1},M_{2}}$ in the $\mathcal{C}$-module category ${}_{H}^{D}%
\mathcal{M}$ is indeed the regular evaluation map.

\begin{prop}
\label{prop-C=HM-equiva}Let $D$ be a minimal $H$-adjoint-stable subcoalgebra
of $H_{R}$. For any nonzero left $D$-comodule $W,$ the algebra
$A=\operatorname{Hom}^{D}\left(  H\otimes W,H\otimes W\right)  $ in
$\mathcal{C}$ is semisimple, and the functors
\begin{align*}
F=\operatorname{Hom}^{D}\left(  H\otimes W,\bullet\right)   &  :{}_{H}%
^{D}\mathcal{M}\rightarrow\mathrm{Mod}_{\mathcal{C}}(A),\\
G=\bullet\otimes_{A}\left(  H\otimes W\right)   &  :\mathrm{Mod}_{\mathcal{C}%
}(A)\rightarrow{}_{H}^{D}\mathcal{M}%
\end{align*}
establish an equivalence of $\mathcal{C}$-module categories between $_{H}%
^{D}\mathcal{M}$ and $\mathrm{Mod}_{\mathcal{C}}(A)\mathrm{.}$
\end{prop}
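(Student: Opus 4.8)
The plan is to apply Theorem~\ref{theorem-G-is-qinv} with $\mathcal{M}={}_{H}^{D}\mathcal{M}$, $\mathcal{C}={}_{H^{cop}}\mathcal{M}$, and generating object $M=H\otimes W$; the proof then consists of checking the hypotheses of that theorem and identifying the internal Hom of the $\mathcal{C}$-module category ${}_{H}^{D}\mathcal{M}$ with the ordinary Hom-space $\operatorname{Hom}^{D}$.

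First I would record that $\mathcal{C}={}_{H^{cop}}\mathcal{M}$ is a fusion category: since $H$ is finite dimensional semisimple and cosemisimple, so is $H^{cop}$, whence ${}_{H^{cop}}\mathcal{M}$ is a $k$-linear abelian semisimple rigid monoidal category with finitely many simple objects and simple unit object. Next, by Proposition~\ref{propHdecomp} the given minimal $H$-adjoint-stable subcoalgebra $D$ of $H_{R}$ is one of the summands $D_{i}$, so ${}_{H}^{D}\mathcal{M}$ is one of the module subcategories in the decomposition~(\ref{decomp HRHM}) of ${}_{H}^{H_{R}}\mathcal{M}={}_{H}^{H}\mathcal{YD}$; the latter is semisimple because $(H,R)$ is semisimple and cosemisimple (cf. Proposition~\ref{propcos.s}), and finite dimensionality of $D$ guarantees finitely many simple objects, so ${}_{H}^{D}\mathcal{M}$ is a $k$-linear abelian semisimple module category over $\mathcal{C}$.

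The crucial input is Lemma~\ref{lemDHMirr}, which says that ${}_{H}^{D}\mathcal{M}$ is indecomposable over $\mathcal{C}$. Hence, by the second half of Ostrik's theorem (Lemma~\ref{lemOstrik}), every nonzero object of ${}_{H}^{D}\mathcal{M}$ is a generator. Since $W\neq 0$ the object $M=H\otimes W$ is nonzero, and it genuinely lies in ${}_{H}^{D}\mathcal{M}$ via the coaction~(\ref{eqe}) together with left multiplication of $H$ on the first tensorand; therefore $M$ generates ${}_{H}^{D}\mathcal{M}$. Moreover, by the discussion immediately preceding the statement, the internal Hom of the $\mathcal{C}$-module category ${}_{H}^{D}\mathcal{M}$ is given by $\underline{\operatorname{Hom}}(M_{1},M_{2})=\operatorname{Hom}^{D}(M_{1},M_{2})$ with the indicated $H$-action, and its evaluation morphism is the ordinary one; in particular $A=\underline{\operatorname{Hom}}(M,M)=\operatorname{Hom}^{D}(H\otimes W,H\otimes W)$.

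With these identifications in hand, Theorem~\ref{theorem-G-is-qinv} directly gives that $A$ is a semisimple algebra in $\mathcal{C}$ and that $F=\underline{\operatorname{Hom}}(M,\bullet)=\operatorname{Hom}^{D}(H\otimes W,\bullet)$ and $G=\bullet\otimes_{A}M=\bullet\otimes_{A}(H\otimes W)$ are mutually quasi-inverse equivalences of $\mathcal{C}$-module categories. I do not expect a serious obstacle: the whole argument is an assembly of results already established (Lemmas~\ref{lemOstrik} and~\ref{lemDHMirr}, Propositions~\ref{propcos.s} and~\ref{propHdecomp}, Theorem~\ref{theorem-G-is-qinv}, and the internal-Hom computation preceding the statement). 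If anything requires care, it is making sure that the semisimplicity of ${}_{H}^{D}\mathcal{M}$ and the finiteness of its set of simple objects are both in place before invoking Ostrik's theorem, and that $M\neq 0$; both are immediate, so the actual proof should be very short.
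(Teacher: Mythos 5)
Your proposal is correct and follows essentially the same route as the paper: invoke Lemma~\ref{lemDHMirr} to get indecomposability of ${}_{H}^{D}\mathcal{M}$ over ${}_{H^{cop}}\mathcal{M}$, conclude via Ostrik's theorem that the nonzero object $H\otimes W$ is a generator, identify the internal Hom with $\operatorname{Hom}^{D}$ as in the discussion preceding the statement, and apply Theorem~\ref{theorem-G-is-qinv}. The paper's own proof is just a two-line version of this same argument, so your more careful verification of the hypotheses is fine and adds nothing problematic.
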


\begin{proof}
Since $\mathcal{M}$ is indecomposable as a $\mathcal{C}$-module category, the
object $M=$ $H\otimes W$ generates $\mathcal{M}$. The result follows from
Theorem \ref{theorem-G-is-qinv}.
\end{proof}

\subsection{Structure of $_{H}^{D}\mathcal{M}$ as a module category over
$\mathcal{C}={}\mathrm{Vec}_{k}$}

In this subsection, we present another characterization of the category
$\mathcal{M}={}_{H}^{D}\mathcal{M}$. Let $\mathcal{C}=\mathrm{Vec}_{k}$ be the
category of finite dimensional $k$-vector spaces. The category ${}_{H}%
^{D}\mathcal{M}$ has a natural structure of module category over $\mathcal{C}$
by considering $X\otimes M$ as an object of $_{H}^{D}\mathcal{M}$ via the
$H$-action and $D$-coaction on the right tensorand $M$, for any $X\in
\mathrm{Vec}_{k},M\in{}_{H}^{D}\mathcal{M}$.

If $M_{1},M_{2}\in{}_{H}^{D}\mathcal{M}$, then there is a canonical natural
isomorphism
\[
\operatorname{Hom}_{H}^{D}\left(  \bullet\otimes M_{1},M_{2}\right)
\cong\operatorname{Hom}\left(  \bullet,\operatorname{Hom}_{H}^{D}\left(
M_{1},M_{2}\right)  \right)  .
\]
Thus the internal Hom $\underline{\operatorname{Hom}}\left(  M_{1}%
,M_{2}\right)  =\operatorname{Hom}_{H}^{D}\left(  M_{1},M_{2}\right)  $, and
the evaluation map $ev_{M_{1},M_{2}}$ is exactly the regular evaluation map.
Applying Theorem \ref{theorem-G-is-qinv} to $\mathcal{M}$, we get:

\begin{prop}
\label{prop-C=Vec-equiva}Let $D$ be a minimal $H$-adjoint-stable subcoalgebra
of $H_{R}$. If $W$ is a finite dimensional nonzero left $D$-comodule, then
$A=\operatorname{Hom}_{H}^{D}\left(  H\otimes W,H\otimes W\right)  $ is a
semisimple $k$-algebra, and the functors
\begin{align*}
F=\operatorname{Hom}_{H}^{D}\left(  H\otimes W,\bullet\right)   &  :{}_{H}%
^{D}\mathcal{M}\rightarrow\mathcal{M}_{A},\\
G=\bullet\otimes_{A}\left(  H\otimes W\right)   &  :\mathcal{M}_{A}%
\rightarrow{}_{H}^{D}\mathcal{M}%
\end{align*}
establish an equivalence of $\mathcal{C}$-module categories between $_{H}%
^{D}\mathcal{M}$ and $\mathcal{M}_{A}\mathrm{.}$
\end{prop}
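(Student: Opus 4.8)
The plan is to deduce the whole statement from Theorem~\ref{theorem-G-is-qinv} applied to the module category $\mathcal{M}={}_{H}^{D}\mathcal{M}$ over the fusion category $\mathcal{C}=\mathrm{Vec}_{k}$, with distinguished object $M=H\otimes W$. Here $\mathrm{Vec}_{k}$ is a fusion category; ${}_{H}^{D}\mathcal{M}$ is a $k$-linear abelian semisimple module category over it, since it is a direct summand of the semisimple category ${}_{H}^{H_{R}}\mathcal{M}={}_{H}^{H}\mathcal{YD}$ by the decomposition~\eqref{decomp HRHM}; and, as recorded just before the statement, the internal Hom in this module category is $\underline{\operatorname{Hom}}(M_{1},M_{2})=\operatorname{Hom}_{H}^{D}(M_{1},M_{2})$ with the regular evaluation. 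So the only thing requiring proof is that $M=H\otimes W$ is a \emph{generator} of ${}_{H}^{D}\mathcal{M}$ over $\mathrm{Vec}_{k}$. This is genuinely stronger than the corresponding fact over ${}_{H^{cop}}\mathcal{M}$ used in Proposition~\ref{prop-C=HM-equiva}, because over $\mathrm{Vec}_{k}$ the objects $X\otimes M$ are only finite direct sums of copies of $M$; concretely, in the semisimple category ${}_{H}^{D}\mathcal{M}$ this amounts to showing $\operatorname{Hom}_{H}^{D}(H\otimes W,V)\neq 0$ for every simple object $V$.

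First I would establish a natural isomorphism $\operatorname{Hom}_{H}^{D}(H\otimes W,V)\cong\operatorname{Hom}^{D}(W,V)$ for every $V\in{}_{H}^{D}\mathcal{M}$, where on the right $V$ is viewed as a plain left $D$-comodule. Restricting an $H$-linear $D$-colinear map to $1\otimes W$ gives the forward map; since $\rho_{R}(1\otimes w)=\sum w^{\langle-1\rangle}\otimes(1\otimes w^{\langle0\rangle})$ by~\eqref{eqe}, the restriction is $D$-colinear. Conversely, a $D$-colinear $\tilde f\colon W\to V$ is sent to $h\otimes w\mapsto h\tilde f(w)$, which is $D$-colinear precisely by the compatibility~\eqref{eqb} for $V$. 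These constructions are mutually inverse by $H$-linearity, and natural in $V$.

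The heart of the matter is to show that every simple left $D$-comodule occurs in $V$, for every nonzero $V\in{}_{H}^{D}\mathcal{M}$. Pick $0\neq v$ lying in a simple $D$-subcomodule $W_{v}$ of $V$, of coalgebra type $C$ a simple subcoalgebra of $D$ (this uses that $D$ is cosemisimple, being a subcoalgebra of the cosemisimple coalgebra $H_{R}$ of Proposition~\ref{propcos.s}). The subspace $HW_{v}$ is $H$-stable and, by~\eqref{eqb}, a $D$-subcomodule, hence $HW_{v}\in{}_{H}^{D}\mathcal{M}$ and is finite dimensional. I claim its coefficient coalgebra $E:=\mathrm{cf}(HW_{v})\subseteq D$ is $H$-adjoint-stable. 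The key point is the Hopf-algebra identity $\sum h_{(1)}\otimes S(h_{(3)})h_{(2)}=h\otimes 1$ (valid since $H$ has bijective antipode), which together with~\eqref{eqb} yields, for all $w\in HW_{v}$ and $h\in H$,
\[
(h\cdot_{ad}\otimes\operatorname{id})\,\rho_{R}(w)=\sum\nolimits_{(h)}\bigl(\operatorname{id}\otimes(Sh_{(2)}\,\cdot\,)\bigr)\,\rho_{R}(h_{(1)}w).
\]
The right-hand side lies in $E\otimes HW_{v}$ because $h_{(1)}w\in HW_{v}$; applying $\operatorname{id}_{D}\otimes\xi$ for $\xi\in(HW_{v})^{\ast}$ then gives $h\cdot_{ad}c\in E$ for every generator $c$ of $E$, i.e. $h\cdot_{ad}E\subseteq E$. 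Since $E\supseteq\mathrm{cf}(W_{v})=C\neq0$, minimality of $D$ among $H$-adjoint-stable subcoalgebras of $H_{R}$ forces $E=D$, so all simple comodule types appear in $HW_{v}\subseteq V$.

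Finally, for any simple $V\in{}_{H}^{D}\mathcal{M}$ and any nonzero finite dimensional left $D$-comodule $W$, the cosemisimple $W$ has a simple summand, whose type also occurs in $V$ by the previous paragraph, so $\operatorname{Hom}^{D}(W,V)\neq0$ and hence $\operatorname{Hom}_{H}^{D}(H\otimes W,V)\neq0$. Thus $H\otimes W$ is a generator, and Theorem~\ref{theorem-G-is-qinv} immediately gives that $A=\operatorname{Hom}_{H}^{D}(H\otimes W,H\otimes W)$ is a semisimple $k$-algebra and that $F$ and $G$ are mutually quasi-inverse equivalences of $\mathcal{C}$-module categories. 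The step I expect to be the main obstacle is the $H$-adjoint-stability of the coefficient coalgebra $\mathrm{cf}(HW_{v})$: it is the one place where the Yetter--Drinfeld axiom and the Hopf structure must be combined with care, everything else being either formal or a direct appeal to Theorem~\ref{theorem-G-is-qinv}.
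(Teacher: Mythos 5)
Your proposal is correct and follows essentially the same route as the paper: both reduce the statement to showing that $M=H\otimes W$ generates ${}_{H}^{D}\mathcal{M}$ over $\mathrm{Vec}_{k}$, and both prove this by showing that the coefficient space of a simple object $V$ is a nonzero $H$-adjoint-stable coideal (resp.\ subcoalgebra) of $D$, hence all of $D$ by minimality, so that every simple $D$-comodule embeds in $V$ and $\operatorname{Hom}_{H}^{D}\left(  H\otimes W,V\right)  \neq0$. One caveat on your key step: the identity $\sum h_{(1)}\otimes S(h_{(3)})h_{(2)}=h\otimes1$ is \emph{not} a consequence of bijectivity of the antipode (the general form requires $\bar{S}$, the composite inverse of $S$, in place of $S$, since $\sum\bar{S}(h_{(2)})h_{(1)}=\varepsilon(h)1$ while $\sum S(h_{(2)})h_{(1)}\neq\varepsilon(h)1$ in general); it is valid in your argument only because $H$ is semisimple and cosemisimple, whence $S^{2}=id_{H}$ and $\bar{S}=S$, and that is the hypothesis you should invoke.
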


\begin{proof}
To apply Theorem \ref{theorem-G-is-qinv} to the $\mathcal{C}$-module category
$\mathcal{M}={}_{H}^{D}\mathcal{M}$ and the object $M=H\otimes W\in
\mathcal{M}$, we only need to verify that $H\otimes W$ generates the
$\mathcal{C}$-module category $\mathcal{M}$. For any simple object
$V\in\mathcal{M}$, we claim that the internal Hom $\underline
{\operatorname{Hom}}\left(  M,V\right)  =\operatorname{Hom}_{H}^{D}\left(
M,V\right)  $ is nonzero. Let $D^{\prime}=\mathrm{span}\left\{  v^{\ast
}\rightharpoonup v\mid v\in V,v^{\ast}\in V^{\ast}\right\}  $ with $v^{\ast
}\rightharpoonup v=\sum v^{\left\langle -1\right\rangle }\left\langle v^{\ast
},v^{\left\langle 0\right\rangle }\right\rangle $. It is easy to check that
$D^{\prime}$ is a nonzero left coideal of $D$ and is also an $H$-submodule
under the left $H$-action $\cdot_{ad}$. Since $D$ is irreducible in ${}%
_{H}^{H}\mathcal{YD}$, then $D^{\prime}=D$. So there exists a surjection
$V^{\left(  n\right)  }\rightarrow D\rightarrow0$ in $^{D}\mathcal{M}$ for
some $n\in\mathbb{N}^{+}$. Since $^{D}\mathcal{M}$ is semisimple, there exists
an injection $0\rightarrow D\rightarrow$ $V^{\left(  n\right)  }$ in
$^{D}\mathcal{M}$. Take a simple $D$-subcomodule $W^{\prime}$ of $W$. Then
$W^{\prime}$ is isomorphic to a simple left coideal of $D$ and there exists a
left $D$-comodule injection $j:W^{\prime}\rightarrow V$. Thus the map
$M=H\otimes W\rightarrow V$ given by $h\otimes w\rightarrow hj\left(  p\left(
w\right)  \right)  $, $h\in H,w\in W$ is a nonzero morphism in $_{H}%
^{D}\mathcal{M}$, where $p:W\rightarrow W^{\prime}$ is a left $D$-comodule
projection. Then by the isomorphism $\operatorname{Hom}_{H}^{D}\left(
\operatorname{Hom}_{H}^{D}\left(  M,V\right)  \otimes M,V\right)
\cong\operatorname{Hom}\left(  \operatorname{Hom}_{H}^{D}\left(  M,V\right)
,\operatorname{Hom}_{H}^{D}\left(  M,V\right)  \right)  \neq0,$ so the
evaluation morphism $\operatorname{Hom}_{H}^{D}\left(  M,V\right)  \otimes
M\rightarrow V$ is a surjection in $\mathcal{M}$. Hence $M$ is a generator,
and the result follows.
\end{proof}

Take a finite dimensional left $D$-comodule $W$. Certainly, $W^{\ast}$ can be
considered as a natural right $D$-comodule. To be specific, $\sum\left\langle
{{w}^{\ast\left\langle 0\right\rangle }},w\right\rangle {{w}^{\ast\left\langle
1\right\rangle }}=\sum{w}^{\left\langle -1\right\rangle }\left\langle w^{\ast
},{w}^{\left\langle 0\right\rangle }\right\rangle $, for all $w\in W$,
$w^{\ast}\in W^{\ast}$.

\begin{prop}
\label{propiHomHotW}Let $W$ be a finite dimensional nonzero left $D$-comodule.
Then there is a functor equivalence $\operatorname{Hom}_{H}^{D}\left(
H\otimes W,\bullet\right)  \simeq W^{\ast}\square_{D}\bullet$, where
$\square_{D}$ denotes the cotensor product for $D$-comodules.
\end{prop}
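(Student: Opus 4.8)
The plan is to factor the asserted natural isomorphism through the space $\operatorname{Hom}^D(W,\bullet)$ of left $D$-comodule maps out of $W$, and then to dualize. First I would observe that, by construction, the left $H$-action on $H\otimes W$ is left multiplication on the first tensorand, so $H\otimes W$ is the free left $H$-module on the vector space $W$. Hence restriction along the $k$-linear map $\iota_W\colon W\to H\otimes W$, $w\mapsto 1\otimes w$, yields for every $V\in{}_H^D\mathcal{M}$ an isomorphism $\operatorname{Hom}_H(H\otimes W,V)\xrightarrow{\cong}\operatorname{Hom}_k(W,V)$, $f\mapsto f\circ\iota_W$, natural in $V$, whose inverse sends $g$ to the map $f_g$ determined by $f_g(h\otimes w)=h\,g(w)$.

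Next I would show that, under this bijection, $f$ is a morphism in ${}_H^D\mathcal{M}$ if and only if $g:=f\circ\iota_W$ is a morphism of left $D$-comodules. Writing $\rho_W(w)=\sum w^{\langle-1\rangle}\otimes w^{\langle0\rangle}$ and using the coaction (\ref{eqe}) on $H\otimes W$, the compatibility (\ref{eqb}) for the $H_R$-coaction of $V$, and $f_g(h\otimes w)=h\,g(w)$, the statement that $f_g$ be $D$-colinear unwinds to
\[
\sum h_{(1)}\cdot_{ad}g(w)^{\langle-1\rangle}\otimes h_{(2)}\,g(w)^{\langle0\rangle}=\sum h_{(1)}\cdot_{ad}w^{\langle-1\rangle}\otimes h_{(2)}\,g(w^{\langle0\rangle}),\qquad\forall\,h\in H,\ w\in W.
\]
Specializing $h=1$ gives precisely $\rho_V\circ g=(\mathrm{id}_D\otimes g)\circ\rho_W$, i.e. $g\in\operatorname{Hom}^D(W,V)$; conversely, granting this identity one substitutes $\rho_V(g(w))$ back into the left-hand side to recover the right-hand side, so $f_g$ is $D$-colinear (it is automatically $H$-linear). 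This identifies $\operatorname{Hom}_H^D(H\otimes W,V)$ with $\operatorname{Hom}^D(W,V)$, naturally in $V$.

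Finally, since $\dim W<\infty$, I would invoke the standard description of comodule maps via the cotensor product: a $k$-linear map $g\colon W\to V$ corresponds to $\sum_i w_i^{\ast}\otimes v_i\in W^{\ast}\otimes V$ with $g(w)=\sum_i\langle w_i^{\ast},w\rangle v_i$, this correspondence $\operatorname{Hom}_k(W,V)\cong W^{\ast}\otimes V$ being natural in $V$; comparing $(\mathrm{id}_D\otimes g)\circ\rho_W$ with $\rho_V\circ g$ and using the right $D$-comodule structure on $W^{\ast}$ recalled above, together with non-degeneracy of the evaluation pairing, shows that $g$ is $D$-colinear exactly when $\sum_i w_i^{\ast}\otimes v_i$ lies in the equalizer of $\rho_{W^{\ast}}\otimes\mathrm{id}_V$ and $\mathrm{id}_{W^{\ast}}\otimes\rho_V$, i.e. in $W^{\ast}\square_D V$. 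Composing the three natural isomorphisms yields the claimed functor equivalence $\operatorname{Hom}_H^D(H\otimes W,\bullet)\simeq W^{\ast}\square_D\bullet$.

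I expect the only step requiring real work rather than bookkeeping to be the middle one, namely that $D$-colinearity on the generating subspace $1\otimes W$ propagates to all of $H\otimes W$; it is exactly there that the hypothesis $V\in{}_H^D\mathcal{M}$, i.e. the Yetter-Drinfeld compatibility (\ref{eqb}), is brought to bear, while the outer two steps are routine. If one wants the equivalence of functors valued in $\mathcal{M}_A$ with $A=\operatorname{Hom}_H^D(H\otimes W,H\otimes W)=W^{\ast}\square_D(H\otimes W)$, rather than merely in $\mathrm{Vec}_k$, the compatibility with the $A$-actions on both sides is checked in the same bookkeeping style.
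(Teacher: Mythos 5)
Your proposal is correct and constructs exactly the same natural isomorphism as the paper: the composite of your three identifications is precisely the paper's map $\mu_{V}\colon f\mapsto\sum_{i}w_{i}^{\ast}\otimes f\left(  1\otimes w_{i}\right)  $ with inverse $\nu_{V}$ sending $\sum_{i}w_{i}^{\ast}\otimes v_{i}$ to $h\otimes w\mapsto h\sum_{i}\left\langle w_{i}^{\ast},w\right\rangle v_{i}$. Your factorization through $\operatorname{Hom}^{D}\left(  W,\bullet\right)  $ merely makes explicit the "direct verification" the paper leaves to the reader, correctly isolating where the compatibility (\ref{eqb}) is used.
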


\begin{proof}
Let $\left\{  w_{i},w_{i}^{\ast}\right\}  _{i=1}^{s}$ be a dual basis for $W.$
For any $V\in{}_{H}^{D}\mathcal{M},$ we define a linear map
\[
\mu_{V}:\operatorname{Hom}_{H}^{D}\left(  H\otimes W,V\right)  \rightarrow
W^{\ast}\square_{D}V
\]
given by%
\[
f\mapsto\sum_{i=1}^{s}w_{i}^{\ast}\otimes f\left(  1\otimes w_{i}\right)  .
\]
Since $f\in\operatorname{Hom}_{H}^{D}\left(  H\otimes W,V\right)  $ is a left
$D$-comodule map, $\sum_{i=1}^{s}w_{i}^{\ast}\otimes f\left(  1\otimes
w_{i}\right)  \in W^{\ast}\square_{D}V,$ and the map $\mu_{V}$ is
well-defined. Conversely, define a map
\[
\nu_{V}:W^{\ast}\square_{D}V\rightarrow\operatorname{Hom}_{H}^{D}\left(
H\otimes W,V\right)
\]
given by
\[
\nu_{V}\left(  \sum_{i=1}^{s}w_{i}^{\ast}\otimes v_{i}\right)  \left(
h\otimes w\right)  =h\sum_{i=1}^{s}\left\langle w_{i}^{\ast},w\right\rangle
v_{i},
\]
for any $\sum_{i=1}^{s}w_{i}^{\ast}\otimes v_{i}\in W^{\ast}\square_{D}V,h\in
H,w\in W.$ Direct verification shows that $\nu_{V}\left(  \sum_{i=1}^{s}%
w_{i}^{\ast}\otimes v_{i}\right)  $ is a morphism in $_{H}^{D}\mathcal{M}$,
then $\nu_{V}$ is well defined. It is easy to check that $\nu_{V}$ and
$\mu_{V}$ are mutually inverse maps. For any $V_{1},V_{2}\in{}_{H}%
^{D}\mathcal{M}$ and any $\alpha\in\operatorname{Hom}_{H}^{D}\left(
V_{1},V_{2}\right)  ,$%
\begin{align*}
\left(  id\otimes\alpha\right)  \circ\mu_{V_{1}}\left(  f\right)   &
=\sum_{i=1}^{s}w_{i}^{\ast}\otimes\left(  \alpha\circ f\right)  \left(
1\otimes w_{i}\right) \\
&  =\mu_{V_{2}}\left(  \alpha\circ f\right)  ,
\end{align*}
for all $f\in\operatorname{Hom}_{H}^{D}\left(  H\otimes W,V_{1}\right)  .$ It
follows that $\mu=\left\{  \mu_{V}\mid V\in{}_{H}^{D}\mathcal{M}\right\}  $ is
a natural isomorphism of the functor $\operatorname{Hom}_{H}^{D}\left(
H\otimes W,\bullet\right)  $ to $W^{\ast}\square_{D}\bullet$. Therefore, we
have functor equivalences $\underline{\operatorname{Hom}}\left(  H\otimes
W,\bullet\right)  \simeq\operatorname{Hom}_{H}^{D}\left(  H\otimes
W,\bullet\right)  \simeq W^{\ast}\square_{D}\bullet.$
\end{proof}

We now state another characterization of the algebra
\[
A=\underline{\operatorname{Hom}}\left(  H\otimes W,H\otimes W\right)
=\operatorname{Hom}_{H}^{D}\left(  H\otimes W,H\otimes W\right)
\]
in Proposition \ref{prop-C=Vec-equiva}, if $W$ is finite dimensional.

For any two finite dimensional left $D$-comodules $W$ and $W^{\prime}$, we
write
\[
N_{WW^{\prime}}=W^{\ast}\square_{D}\left(  H\otimes W^{\prime}\right)
\cong\underline{\operatorname{Hom}}\left(  H\otimes W,H\otimes W^{\prime
}\right)  .
\]
Let $W^{\prime\prime}$ be another finite dimensional left $D$-comodule.
Deduced from the composition of the internal Hom, we get
\[
x\circ y=\sum_{i=1}^{n}\sum_{j=1}^{m}w_{i}^{\ast}\otimes h_{i}g_{j}%
\otimes\left\langle w_{j}^{\prime\ast},w_{i}^{\prime}\right\rangle
w_{j}^{\prime\prime},
\]
for any $x=\sum_{j=1}^{m}w_{j}^{\prime\ast}\otimes g_{j}\otimes w_{j}%
^{\prime\prime}\in N_{W^{\prime}W^{\prime\prime}}$, $y=\sum_{i=1}^{n}%
w_{i}^{\ast}\otimes h_{i}\otimes w_{i}^{\prime}\in N_{WW^{\prime}}$. If
$W=W^{\prime}$, we write $N_{W}=N_{WW}$. Then $\left(  N_{W},\circ\right)  $
is an associative algebra with the identity element $\sum_{i=1}^{s}w_{i}%
^{\ast}\otimes1_{H}\otimes w_{i}\in N_{W}$, where $\left\{  w_{i},w_{i}^{\ast
}\right\}  _{i=1}^{s}$ is a dual basis for $W$. Furthermore, each
$N_{WW^{\prime}}$ is both an $N_{W^{\prime}}$-$N_{W}$-bimodule and a left
$H$-comodule via the coaction given by%
\[
\rho\left(  x\right)  =\sum_{i=1}^{n}S\left(  h_{i\left(  2\right)  }\right)
\otimes w_{i}^{\ast}\otimes h_{i\left(  1\right)  }\otimes w_{i}^{\prime
},\quad\text{for }x=\sum_{i=1}^{n}w_{i}^{\ast}\otimes h_{i}\otimes
w_{i}^{\prime}\in N_{WW^{\prime}}.
\]
Thus, $N_{W}$ is a left $H$-comodule algebra. We call it the $R$%
\emph{-adjoint-stable algebra} of $W$.

\begin{cor}
Let $W$ be a finite dimensional nonzero left $D$-comodule, then
\[
N_{W}=W^{\ast}\square_{D}\left(  H\otimes W\right)  \cong\operatorname{Hom}%
_{H}^{D}\left(  H\otimes W,H\otimes W\right)
\]
as algebras.
\end{cor}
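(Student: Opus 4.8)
The plan is to combine the two functor identifications that have already been established. Proposition~\ref{propiHomHotW} shows that, for a finite dimensional nonzero left $D$-comodule $W$, the internal-Hom functor $\underline{\operatorname{Hom}}\left(H\otimes W,\bullet\right)=\operatorname{Hom}_{H}^{D}\left(H\otimes W,\bullet\right)$ is naturally isomorphic, via $\mu_{V}$, to the functor $W^{\ast}\square_{D}\bullet$. Specializing this natural isomorphism to the object $V=H\otimes W\in{}_{H}^{D}\mathcal{M}$ gives a $k$-linear isomorphism
\[
\mu_{H\otimes W}:\operatorname{Hom}_{H}^{D}\left(H\otimes W,H\otimes W\right)\overset{\cong}{\longrightarrow}W^{\ast}\square_{D}\left(H\otimes W\right)=N_{W},
\]
so the only content left is that $\mu_{H\otimes W}$ is multiplicative, i.e. intertwines composition of morphisms on the left with the product $\circ$ defined on $N_{W}$, and sends $\mathrm{id}_{H\otimes W}$ to the stated unit $\sum_{i}w_{i}^{\ast}\otimes 1_{H}\otimes w_{i}$.

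First I would record the explicit inverse $\nu_{V}$ from the proof of Proposition~\ref{propiHomHotW}: for $\sum_{i}w_{i}^{\ast}\otimes v_{i}\in W^{\ast}\square_{D}V$ and $h\in H$, $w\in W$, one has $\nu_{V}(\sum_{i}w_{i}^{\ast}\otimes v_{i})(h\otimes w)=h\sum_{i}\langle w_{i}^{\ast},w\rangle v_{i}$. Since $\mu$ and $\nu$ are mutually inverse, it is equivalent (and slightly cleaner) to check that $\nu_{H\otimes W}$ is an algebra anti-isomorphism is \emph{not} what we want; rather, I would verify directly that $\mu$ is multiplicative. Take $f,g\in\operatorname{Hom}_{H}^{D}\left(H\otimes W,H\otimes W\right)$, write $g(1\otimes w_{i})=\sum_{k}h_{ik}\otimes w_{k}'$ for the appropriate scalars/elements, and use $H$-linearity of $f$ to compute $f(g(1\otimes w_{i}))=\sum_{k}h_{ik}\,f(1\otimes w_{k}')$. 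Matching this against the formula given in the excerpt for $x\circ y$ with $x=\mu(f)$, $y=\mu(g)$ — where the pairing $\langle w_{j}^{\prime\ast},w_{i}^{\prime}\rangle$ exactly records the coefficients $h_{ik}$ via the dual basis — shows $\mu_{H\otimes W}(f\circ g)=\mu_{H\otimes W}(f)\circ\mu_{H\otimes W}(g)$. The unit statement is immediate: $\mu_{H\otimes W}(\mathrm{id})=\sum_{i}w_{i}^{\ast}\otimes\mathrm{id}(1\otimes w_{i})=\sum_{i}w_{i}^{\ast}\otimes 1_{H}\otimes w_{i}$, which is precisely the identity element of $(N_{W},\circ)$ recorded above.

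Alternatively — and this is the route I would actually present, since it avoids any recomputation — I would invoke the general fact, recalled in the preliminaries, that $\underline{\operatorname{Hom}}(M,M)$ with its composition $\mu_{M,M,M}$ is an algebra in $\mathcal{C}$, together with the observation already made in the text (``Deduced from the composition of the internal Hom, we get $x\circ y=\dots$'') that the product $\circ$ on $N_{W}$ is, by definition, the transport of $\mu_{M,M,M}$ along the isomorphism $\mu$ of Proposition~\ref{propiHomHotW}. Since a natural isomorphism of representable functors that is compatible with the evaluation/composition structure transports the algebra structure, $\mu_{H\otimes W}$ is an algebra isomorphism. For $\mathcal{C}=\mathrm{Vec}_{k}$ this says exactly $N_{W}\cong\operatorname{Hom}_{H}^{D}(H\otimes W,H\otimes W)$ as ordinary $k$-algebras, which together with Proposition~\ref{prop-C=Vec-equiva} (semisimplicity of $A$) completes the corollary.

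The main obstacle, such as it is, is purely bookkeeping: one must be careful that the composition $\circ$ on $N_{W}$ as written in the excerpt corresponds to $f\circ g$ and not $g\circ f$ (note the order of $h_{i}g_{j}$ in the displayed formula), and that the dual-basis contraction $\langle w_{j}^{\prime\ast},w_{i}^{\prime}\rangle$ is inserted on the correct tensor leg so that the cotensor condition over $D$ is respected throughout. Once the conventions are fixed consistently, the verification that $\mu_{H\otimes W}$ respects products and units is a one-line matching of formulas, and no genuine difficulty remains.
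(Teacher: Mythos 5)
Your proposal is correct and follows essentially the same route as the paper: the paper defines the product $\circ$ on $N_{W}$ precisely as the transport of the internal-Hom composition along the isomorphism of Proposition~\ref{propiHomHotW}, so the corollary is immediate, and your explicit check that $\mu_{H\otimes W}(f\circ g)=\mu_{H\otimes W}(f)\circ\mu_{H\otimes W}(g)$ (with the order of $h_{i}g_{j}$ handled correctly) together with the unit computation simply makes that implicit argument explicit.
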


\begin{rem}
For finite dimensional left $D$-comodule $W$, the algebra $\operatorname{Hom}%
^{D}\left(  H\otimes W,H\otimes W\right)  $ in Proposition
\ref{prop-C=HM-equiva} is indeed isomorphic to the smash product
$N_{W}\#H^{\ast op}$.
\end{rem}

By the isomorphism $N_{W}\cong\operatorname{Hom}_{H}^{D}\left(  H\otimes
W,H\otimes W\right)  $, the evaluation map in $\mathcal{C}$-module category
${}_{H}^{D}\mathcal{M}$ gives a natural left $N_{W}$-action on $H\otimes W$
with
\[
\left(  \sum_{i=1}^{n}w_{i}^{\ast}\otimes h_{i}\otimes w_{i}\right)
\cdot\left(  h\otimes w\right)  =\sum_{i=1}^{n}hh_{i}\otimes\left\langle
w_{i}^{\ast},w\right\rangle w_{i},
\]
where $h\in H,w\in W,\sum_{i=1}^{n}w_{i}^{\ast}\otimes h_{i}\otimes w_{i}\in
N_{W}.$

As a consequence of Proposition \ref{prop-C=Vec-equiva}, we have

\begin{thm}
\label{thmDHM=NWM}Let $\left(  H,R\right)  $ be a semisimple and cosemisimple
quasi-triangular Hopf algebra, and let $D$ be a minimal $H$-adjoint-stable
subcoalgebra of $H_{R}$. If $W$ is a finite dimensional nonzero left
$D$-comodule, then $N_{W}$ is semisimple, and the functors
\begin{align*}
W^{\ast}\square_{D}\bullet{}  &  :{}_{H}^{D}\mathcal{M}\rightarrow
{}\mathcal{M}_{N_{W}},\\
\bullet\otimes_{N_{W}}\left(  H\otimes W\right)  {}  &  :\mathcal{M}_{N_{W}%
}\rightarrow{}_{H}^{D}\mathcal{M}%
\end{align*}
establish an equivalence between $_{H}^{D}\mathcal{M}$ and $\mathcal{M}%
_{N_{W}}$ as module categories over $\mathcal{C}=$ $\mathrm{Vec}_{k}$.
\end{thm}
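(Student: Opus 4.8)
The plan is to deduce Theorem \ref{thmDHM=NWM} as essentially a translation of Proposition \ref{prop-C=Vec-equiva} through the functor equivalence established in Proposition \ref{propiHomHotW}, together with the algebra identification $N_W \cong \operatorname{Hom}_H^D(H\otimes W, H\otimes W)$ recorded in the Corollary just above. Since Proposition \ref{prop-C=Vec-equiva} already gives that $A = \operatorname{Hom}_H^D(H\otimes W, H\otimes W)$ is a semisimple $k$-algebra and that the pair $F = \operatorname{Hom}_H^D(H\otimes W, \bullet)$, $G = \bullet \otimes_A (H\otimes W)$ is a mutually quasi-inverse equivalence of module categories over $\mathcal{C} = \mathrm{Vec}_k$, the content of the present theorem is just to transport this along the algebra isomorphism $\phi\colon N_W \xrightarrow{\ \cong\ } A$. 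First I would invoke this isomorphism to conclude $N_W$ is semisimple and to identify $\mathcal{M}_A \simeq \mathcal{M}_{N_W}$ as categories (restriction of scalars along $\phi$); this identification is automatically compatible with the $\mathrm{Vec}_k$-module structure since that structure on module categories over an algebra is just the underlying $k$-vector-space tensor, independent of the algebra.

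Next I would check that under this identification the functor $F = \operatorname{Hom}_H^D(H\otimes W,\bullet)\colon {}_H^D\mathcal{M} \to \mathcal{M}_A$ becomes precisely $W^\ast \square_D \bullet\colon {}_H^D\mathcal{M} \to \mathcal{M}_{N_W}$. This is exactly the natural isomorphism $\mu = \{\mu_V\}$ constructed in Proposition \ref{propiHomHotW}, so the only thing to verify is that $\mu$ is compatible with the right module structures: that is, $\mu_V$ intertwines the right $A$-action on $\operatorname{Hom}_H^D(H\otimes W, V)$ (coming from $\mu_{M,M,V}$, i.e. precomposition) with a right $N_W$-action on $W^\ast \square_D V$. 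One writes down the composition formula for the internal $\operatorname{Hom}$ recalled before the Corollary, $x\circ y = \sum w_i^\ast \otimes h_i g_j \otimes \langle w_j'^\ast, w_i'\rangle w_j''$, transports it through $\mu$, and reads off the resulting right $N_W$-module structure on $W^\ast \square_D V$; this is a routine but slightly fiddly bookkeeping check with the dual basis. Dually, the quasi-inverse $G = \bullet \otimes_A (H\otimes W)$ becomes $\bullet \otimes_{N_W}(H\otimes W)$ once one uses the explicit left $N_W$-action on $H\otimes W$ displayed just before the theorem, $(\sum w_i^\ast \otimes h_i \otimes w_i)\cdot(h\otimes w) = \sum h h_i \otimes \langle w_i^\ast, w\rangle w_i$, which is literally the evaluation map $ev_{H\otimes W, H\otimes W}$ read through $\phi$; so $U \otimes_{N_W}(H\otimes W)$ coincides with $U \otimes_A (H\otimes W)$ as the coequalizer of the corresponding diagram.

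The final step is to record that these two functors are mutually quasi-inverse equivalences of $\mathrm{Vec}_k$-module categories: this is immediate from Proposition \ref{prop-C=Vec-equiva} once the above identifications are in place, since equivalence is preserved under composition with the category isomorphisms $\mathcal{M}_A \cong \mathcal{M}_{N_W}$ and the natural isomorphism $\mu$, and the module-functor structures match because they are built from the same underlying $k$-linear data. I expect the only real work — and the main (mild) obstacle — to be the compatibility check in the middle paragraph: verifying by hand that $\mu_V$ is $N_W$-linear for the action induced from $A$, i.e. that the composition law for internal $\operatorname{Hom}$ really does transport to the stated $N_W$-module structure on $W^\ast \square_D V$. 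Everything else is formal transport of structure. One should also note in passing that the choice of dual basis $\{w_i, w_i^\ast\}$ used throughout is harmless, since $\mu_V$ and the composition formula are manifestly basis-independent.
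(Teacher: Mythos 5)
Your proposal is correct and follows essentially the same route as the paper: the paper derives Theorem \ref{thmDHM=NWM} directly as a consequence of Proposition \ref{prop-C=Vec-equiva}, transported through the algebra isomorphism $N_{W}\cong\operatorname{Hom}_{H}^{D}\left(  H\otimes W,H\otimes W\right)  $ and the natural isomorphism $\mu$ of Proposition \ref{propiHomHotW}. Your extra care in checking that $\mu_{V}$ intertwines the right module structures is a point the paper leaves implicit, but it is the same argument.
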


Let%
\[
H_{R}=D_{1}\oplus\cdots\oplus D_{r}%
\]
be the decomposition in Proposition \ref{propHdecomp}, where $D_{1}%
,\ldots,D_{r}$ are minimal $H$-adjoint-stable subcoalgebras of $H_{R}$. Let
$W$ be a simple left coideal of $\left(  H_{R},\Delta_{R}\right)  $, then
$W\subseteq D_{i}$, for some $1\leq i\leq r$. By (\ref{eqc})
\[
\Delta_{R}\left(  H\cdot_{ad}W\right)  \subseteq H\cdot_{ad}D_{i}\otimes
H\cdot_{ad}W\subseteq D_{i}\otimes H\cdot_{ad}W,
\]
which implies $H\cdot_{ad}W$ is an $H$-adjoint-stable subcoalgebra of $H_{R}$.
Note that $D_{i}$ is minimal, thus $D_{i}=H\cdot_{ad}W$. We also denote
$N_{W}=W^{\ast}\square_{D_{i}}\left(  H\otimes W\right)  $ the $R$%
-adjoint-stable algebra of $W$.

On the other hand, assume that $D$ is a minimal $H$-adjoint-stable
subcoalgebra of $H_{R}$. Then for any simple left coideal $W\ $of $D$, we have
$D=H\cdot_{ad}W$.

For simple left coideals $W$,$W^{\prime}$ of $\left(  H_{R},\Delta_{R}\right)
$, we say that $W$ and $W^{\prime}$ are conjugate, if
\begin{equation}
H\cdot_{ad}W=H\cdot_{ad}W^{\prime},\text{ denoted by }W\sim W^{\prime}.
\label{eq_conjugate}%
\end{equation}
Then \textquotedblleft$\sim$\textquotedblright\ defines an equivalence
relation on the set of simple left coideals of $\left(  H_{R},\Delta
_{R}\right)  $.

Let $\left\{  W_{1},\ldots,W_{r}\right\}  $ be a complete set of
representatives for the equivalence classes. Then the decomposition in
Proposition \ref{propHdecomp} is
\begin{equation}
H=\left(  H\cdot_{ad}W_{1}\right)  \oplus\cdots\oplus\left(  H\cdot_{ad}%
W_{r}\right)  . \label{eqHDecom}%
\end{equation}
An irreducible Yetter-Drinfeld submodule of $H$ is called a conjugacy class of
$H.$ This notion of the conjugacy class for semisimple Hopf algebras was
introduced by Cohen and Westreich via primitive central idempotents of
character algebra \cite{cohen2010higman,cohen2010structure,cohen2011conjugacy}%
. For quasi-triangular Hopf algebra, it's actually a minimal $H$%
-adjoint-stable subcoalgebra of $H_{R}$.

\begin{thm}
\label{thmirr_mod_stru}Let $\Omega=\left\{  W_{1},\ldots,W_{r}\right\}  $ be a
complete set of representatives for the equivalence classes determined by
``$\sim$'' in (\ref{eq_conjugate}).

\begin{enumerate}
\item Then for any $W\in\Omega$ and any irreducible right $N_{W}$-module $U$,
$U$ $\otimes_{N_{W}}\left(  H\otimes W\right)  $ is irreducible in ${}_{H}%
^{H}\mathcal{YD}$.

\item Any irreducible module $V\in{}_{H}^{H}\mathcal{YD}$ is isomorphic to
$U\otimes_{N_{W}}\left(  H\otimes W\right)  $, for some $W\in\Omega$ and an
irreducible right $N_{W}$-module $U$.

\item As Yetter-Drinfeld modules, $U\otimes_{N_{W}}\left(  H\otimes W\right)
$ and $U^{\prime}$ $\otimes_{N_{W^{\prime}}}\left(  H\otimes W^{\prime
}\right)  $ are isomorphic if and only if $W=W^{\prime}\in\Omega$ and $U\cong
U^{\prime}$ as $N_{W}$-modules.
\end{enumerate}
\end{thm}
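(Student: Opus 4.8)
The plan is to deduce all three parts from the machinery already in place, principally Theorem~\ref{theorem-G-is-qinv}, Proposition~\ref{prop-C=Vec-equiva}, Theorem~\ref{thmDHM=NWM}, and the decomposition~\eqref{decomp HRHM}. First I would fix, for each $W\in\Omega$, the minimal $H$-adjoint-stable subcoalgebra $D=H\cdot_{ad}W$ and recall from Theorem~\ref{thmDHM=NWM} that $N_W$ is a semisimple $k$-algebra and that $\bullet\otimes_{N_W}(H\otimes W):\mathcal{M}_{N_W}\to{}_H^D\mathcal{M}$ is an equivalence of module categories over $\mathrm{Vec}_k$ with quasi-inverse $W^\ast\square_D\bullet$. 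Since an equivalence of abelian categories carries simple objects to simple objects, part~1 follows immediately: if $U$ is an irreducible (hence simple) right $N_W$-module, then $U\otimes_{N_W}(H\otimes W)$ is a simple object of ${}_H^D\mathcal{M}$, and because ${}_H^D\mathcal{M}$ is a full subcategory of ${}_H^{H_R}\mathcal{M}={}_H^H\mathcal{YD}$ closed under subobjects (it is a direct summand by~\eqref{decomp HRHM}), simplicity there is the same as irreducibility as a Yetter--Drinfeld module.

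For part~2, I would take an arbitrary irreducible $V\in{}_H^H\mathcal{YD}={}_H^{H_R}\mathcal{M}$. By the decomposition~\eqref{decomp HRHM} into the $\mathcal{C}$-module subcategories ${}_H^{D_i}\mathcal{M}$, irreducibility of $V$ forces $V$ to lie entirely in exactly one summand ${}_H^{D_i}\mathcal{M}$ (its isotypic components for distinct $D_i$ are subobjects, so all but one vanish). Writing $D=D_i$ and choosing $W=W_i\in\Omega$ the chosen representative with $H\cdot_{ad}W_i=D_i$, apply the quasi-inverse equivalence: $U:=W^\ast\square_D V$ is then a simple object of $\mathcal{M}_{N_W}$, i.e.\ an irreducible right $N_W$-module, and $U\otimes_{N_W}(H\otimes W)\cong V$ by the natural isomorphism of Theorem~\ref{thmDHM=NWM} (equivalently Theorem~\ref{theorem-G-is-qinv}(1), $\underline{\operatorname{Hom}}(M,V)\otimes_A M\cong V$ with $A\cong N_W$ and $M=H\otimes W$). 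This gives the asserted form.

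For part~3, the ``if'' direction is trivial since isomorphic modules over isomorphic algebras are carried to isomorphic objects by the functor $\bullet\otimes_{N_W}(H\otimes W)$. For the ``only if'' direction, suppose $U\otimes_{N_W}(H\otimes W)\cong U'\otimes_{N_{W'}}(H\otimes W')$ as Yetter--Drinfeld modules. The left-hand side lies in ${}_H^{D}\mathcal{M}$ where $D=H\cdot_{ad}W$ and the right-hand side in ${}_H^{D'}\mathcal{M}$ where $D'=H\cdot_{ad}W'$; since the subcategories in~\eqref{decomp HRHM} have pairwise-zero Hom between nonisomorphic summands and the object is nonzero, we must have $D=D'$, and because $\Omega$ is a set of representatives for the relation $\sim$ this means $W=W'$. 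Then both objects lie in the same ${}_H^D\mathcal{M}$, so applying the quasi-inverse $W^\ast\square_D\bullet$ (which is an equivalence, hence faithful and full) yields $U\cong W^\ast\square_D\big(U\otimes_{N_W}(H\otimes W)\big)\cong W^\ast\square_D\big(U'\otimes_{N_W}(H\otimes W)\big)\cong U'$ as right $N_W$-modules, invoking Theorem~\ref{theorem-G-is-qinv}(1) again for the outer isomorphisms.

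The only genuinely delicate point is the claim that an irreducible $V$ must sit inside a single summand ${}_H^{D_i}\mathcal{M}$ and, dually, that simplicity in ${}_H^{D_i}\mathcal{M}$ coincides with irreducibility in ${}_H^H\mathcal{YD}$; this rests on the observation (already used implicitly after~\eqref{decomp HRHM}, via $V=\oplus_i V_i$ with $V_i=\{v\mid\rho_R(v)\in D_i\otimes V\}$) that each $V_i$ is a Yetter--Drinfeld submodule of $V$, so the decomposition is one of subobjects in ${}_H^H\mathcal{YD}$ itself. Everything else is a formal consequence of the equivalence of categories established in Theorem~\ref{thmDHM=NWM} together with Theorem~\ref{theorem-G-is-qinv}, so I expect no computational obstacle; the main care is bookkeeping which $D_i$ and which representative $W_i$ one is working with at each stage.
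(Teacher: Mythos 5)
Your proposal is correct and follows essentially the same route as the paper, which states Theorem~\ref{thmirr_mod_stru} as a direct consequence of the category equivalence in Theorem~\ref{thmDHM=NWM} together with the decomposition~(\ref{decomp HRHM}) and the choice of representatives $\Omega$, without writing out a separate proof. Your expansion — reducing each part to the equivalence $\bullet\otimes_{N_W}(H\otimes W)$ and using that an irreducible object must lie in a single summand ${}_H^{D_i}\mathcal{M}$ — is exactly the intended argument and is sound.
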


Let $D$ be a minimal $H$-adjoint-stable subcoalgebra of $H_{R}$. Assume that
$D$ contains a grouplike element $g$. We now apply our main results to this case.

Let $W=kg$, then%
\[
N_{W}=\left\{  h\in H\;\mid\sum h_{\left(  1\right)  }\cdot_{ad}g\otimes
h_{\left(  2\right)  }=g\otimes h\right\}
\]
is a right coideal subalgebra of $H.$ For a right $N_{W}$-module $U$, the
structure on $U\otimes_{N_{W}}H$ given by%
\[
h^{\prime}\cdot\left(  u\otimes h\right)  =u\otimes h^{\prime}h,
\]%
\[
\rho_{R}\left(  u\otimes h\right)  =\sum h_{\left(  1\right)  }\cdot
_{ad}g\otimes u\otimes h_{\left(  2\right)  },
\]
which makes $U$ $\otimes_{N_{W}}H$ into an object of ${}_{H}^{D}\mathcal{M}$.
By Theorem~\ref{thmirr_mod_stru}, we have

\begin{prop}
\label{PropifC=kg}For any irreducible right $N_{W}$-module $U,$ the object
$U\otimes_{N_{_{W}}}H\in{}_{H}^{D}\mathcal{M}$ is irreducible. Conversely, any
irreducible object of $_{H}^{D}\mathcal{M}$ is isomorphic to $U\otimes_{N_{W}%
}H$, for some irreducible right $N_{W}$-module $U$.
\end{prop}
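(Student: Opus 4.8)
The plan is to read off Proposition~\ref{PropifC=kg} from Theorem~\ref{thmDHM=NWM} applied to the one-dimensional left $D$-comodule $W=kg$, once all the structures have been translated into the explicit formulas displayed above. First I would check that $W=kg$ is a legitimate choice: since $g\in D$ is grouplike in $H_{R}$ we have $\rho_{R}(g)=\Delta_{R}(g)=g\otimes g\in D\otimes kg$, so $W=kg$ is a nonzero simple left coideal of $(H_{R},\Delta_{R})$ contained in $D$, hence a nonzero finite-dimensional left $D$-comodule; by~\eqref{eqc} the span $H\cdot_{ad}g$ is then an $H$-adjoint-stable subcoalgebra of $H_{R}$, nonzero and contained in the minimal $D$, so $D=H\cdot_{ad}g$ and we are genuinely describing ${}_{H}^{D}\mathcal{M}$. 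Semisimplicity of ${}_{H}^{D}\mathcal{M}$ and of $N_{W}$ is inherited from the running hypothesis that $(H,R)$ is semisimple and cosemisimple.

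Next I would set up the dictionary. The linear isomorphism $H\otimes W\to H$, $h\otimes g\mapsto h$, carries the object $H\otimes W\in{}_{H}^{D}\mathcal{M}$ onto $H$ with left multiplication as $H$-action and with $D$-coaction $\rho_{R}(h)=\sum h_{(1)}\cdot_{ad}g\otimes h_{(2)}$, using~\eqref{eqe}. Taking $\{g,g^{\ast}\}$ as a dual basis of $W$, Proposition~\ref{propiHomHotW} gives $N_{W}=W^{\ast}\square_{D}(H\otimes W)=\{\,g^{\ast}\otimes(h\otimes g)\,\}$ subject to the cotensor condition, which reads precisely $\sum h_{(1)}\cdot_{ad}g\otimes h_{(2)}=g\otimes h$; thus $N_{W}$ is identified as a vector space with the right coideal subalgebra $B=\{h\in H\mid\sum h_{(1)}\cdot_{ad}g\otimes h_{(2)}=g\otimes h\}$ of $H$, the unit $g^{\ast}\otimes 1_{H}\otimes g$ corresponding to $1_{H}$. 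Tracking the composition $\circ$ of internal Hom through this identification shows that the multiplication of $N_{W}$ is that of $B$ (up to passing to the opposite algebra, consistent with the isomorphism $kC(g_{i})\cong (N_{W_{i}})^{op}$ recorded in the Introduction), that the left $N_{W}$-action on $H\otimes W$ displayed before Theorem~\ref{thmDHM=NWM} becomes $h_{i}\cdot h=hh_{i}$, and hence that the functor $\bullet\otimes_{N_{W}}(H\otimes W)$ becomes $\bullet\otimes_{N_{W}}H$ with exactly the $H$-action $h'\cdot(u\otimes h)=u\otimes h'h$ and coaction $\rho_{R}(u\otimes h)=\sum h_{(1)}\cdot_{ad}g\otimes u\otimes h_{(2)}$ from the statement; verifying that these are well defined on the coequalizer and agree with the stated maps is a routine computation.

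With the dictionary in hand the proposition is immediate. By Theorem~\ref{thmDHM=NWM} the functors $W^{\ast}\square_{D}\bullet$ and $\bullet\otimes_{N_{W}}(H\otimes W)$ are mutually quasi-inverse equivalences between ${}_{H}^{D}\mathcal{M}$ and $\mathcal{M}_{N_{W}}$, and an equivalence of $k$-linear abelian categories sends simple objects to simple objects and is essentially surjective; so $U\otimes_{N_{W}}H$ is irreducible in ${}_{H}^{D}\mathcal{M}$ for every irreducible right $N_{W}$-module $U$, and every irreducible object of ${}_{H}^{D}\mathcal{M}$ is isomorphic to such a $U\otimes_{N_{W}}H$. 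The real content sits in Theorems~\ref{theorem-G-is-qinv} and~\ref{thmDHM=NWM}; the only points in the present argument that call for care are getting the direction of the algebra identification $N_{W}\cong B$ (or $B^{op}$) right and confirming that $kg$ is a simple left coideal of $D$ with $D=H\cdot_{ad}(kg)$, so that the ambient category is really ${}_{H}^{D}\mathcal{M}$.
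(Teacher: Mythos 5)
Your proposal is correct and follows essentially the same route as the paper: the paper obtains Proposition~\ref{PropifC=kg} by specializing Theorem~\ref{thmirr_mod_stru} (equivalently Theorem~\ref{thmDHM=NWM}) to the one-dimensional comodule $W=kg$, exactly as you do, merely stating the identification of $N_{kg}$ with the right coideal subalgebra and the structure maps on $U\otimes_{N_W}H$ without the verification you spell out. Your extra checks (that $kg$ is a simple left coideal with $D=H\cdot_{ad}(kg)$, and the opposite-algebra subtlety in $N_W\cong B^{op}$) are accurate and fill in details the paper leaves implicit.
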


The structure theorem of Yetter-Drinfeld module over a finite group algebra
follows immediately from Proposition \ref{PropifC=kg}.

\begin{cor}
[cf. \cite{dijkgraaf1992quasi,gould1993quantum,Andruskiewitsch1998Braided}%
]\label{corDGmodstru}Let $G$ be a finite group. For $g\in G$, denote by
$C\left(  g\right)  =\left\{  h\in G\mid hgh^{-1}=g\right\}  $ the centralizer
subgroup of $g$ and by $\mathcal{C}_{g}$ the conjugacy classes of $g$. Let $U$
be an irreducible left $kC\left(  g\right)  $-module. Then we have induced
$kG$-module $M\left(  g,U\right)  =kG\otimes_{kC\left(  g\right)  }U$. The
comodule structure on $M\left(  g,U\right)  $ is given by%
\[
\rho\left(  x\otimes u\right)  =xgx^{-1}\otimes x\otimes u,\forall x\in G,u\in
U,
\]
which makes $M\left(  g,U\right)  $ into an irreducible object of $_{kG}%
^{kG}\mathcal{YD}.$ Moreover, any irreducible object of $_{kG}^{kG}%
\mathcal{YD}$ is isomorphic to some $M\left(  g,U\right)  $.
\end{cor}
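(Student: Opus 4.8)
The plan is to obtain this as the $H=kG$ specialization of Proposition~\ref{PropifC=kg}. Since the category ${}_{kG}^{kG}\mathcal{YD}$ depends only on the Hopf algebra $kG$, I am free to equip $kG$ with the trivial quasi-triangular structure $R=1_H\otimes 1_H$ ($kG$ is always cosemisimple, and is semisimple under the standing hypothesis of this section). For this $R$, formula~(\ref{DefDeltaR}) degenerates to $\Delta_R=\Delta$, so $H_R=kG$ with its usual coalgebra structure and with $H$ acting by conjugation $\cdot_{ad}$, and by Lemma~\ref{lemHHYDeqHRHM} together with~(\ref{eqa}) the $H_R$-coaction $\rho_R$ of any Yetter-Drinfeld module coincides with its $H$-coaction $\rho$. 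First I would note that a subcoalgebra of the pointed coalgebra $kG$ is $kS$ for a subset $S\subseteq G$, and that it is $\cdot_{ad}$-stable exactly when $S$ is a union of conjugacy classes; hence the decomposition of Proposition~\ref{propHdecomp} reads $H_R=k\mathcal{C}_1\oplus\cdots\oplus k\mathcal{C}_r$ over the conjugacy classes, each $D=k\mathcal{C}_g$ ($g\in G$) is a minimal $H$-adjoint-stable subcoalgebra, and $W=kg$ is a one-dimensional (hence simple) left coideal of $D$ spanned by a grouplike element, so Proposition~\ref{PropifC=kg} applies.

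Next I would identify $N_W$. Evaluating the description of $N_W$ in Proposition~\ref{PropifC=kg} on a group element $h$ gives $\sum h_{(1)}\cdot_{ad}g\otimes h_{(2)}=hgh^{-1}\otimes h$, equal to $g\otimes h$ iff $h\in C(g)$; so $N_W=\operatorname{span}_k C(g)$ as a subspace of $kG$. Inspecting the composition formula for the internal Hom (equivalently, the remark in the introduction that $kC(g_i)\cong(N_{W_i})^{\mathrm{op}}$) shows that the algebra $N_W$ is $(kC(g))^{\mathrm{op}}$; hence a right $N_W$-module is exactly a left $kC(g)$-module, with irreducibles matching irreducibles, and under this identification the evaluation-map action of $N_W$ on $H\otimes W\cong kG$ (the displayed formula preceding Theorem~\ref{thmDHM=NWM}) becomes $a\cdot h=ha$ for $a\in C(g)$. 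Thus $U\otimes_{N_W}(H\otimes W)$ is $U\otimes kG$ modulo the relations $(a\cdot u)\otimes h=u\otimes ha$.

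Then I would exhibit the comparison with the classical induced module: for a left $kC(g)$-module $U$ and $M(g,U)=kG\otimes_{kC(g)}U$ as in the statement, define $\Theta\colon U\otimes_{N_W}(H\otimes W)\to M(g,U)$ by $\Theta(u\otimes(h\otimes g))=h\otimes u$. Under $\Theta$ the defining relations of the source become precisely the relations $xa\otimes u=x\otimes(a\cdot u)$ ($a\in C(g)$) defining $M(g,U)$, so $\Theta$ is a well-defined bijection; and using the action/coaction formulas of Proposition~\ref{PropifC=kg} with $\rho=\rho_R$ one checks that $\Theta$ carries the left $H$-action $h'\cdot(u\otimes h)=u\otimes h'h$ to left multiplication on the $kG$-factor and carries $\rho(u\otimes h)=hgh^{-1}\otimes u\otimes h$ to $x\otimes u\mapsto xgx^{-1}\otimes x\otimes u$. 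This simultaneously proves that $M(g,U)$, with the action and coaction of the statement, is an object of ${}_{kG}^{kG}\mathcal{YD}$ and that it is isomorphic to $U\otimes_{N_W}(H\otimes W)$. The corollary then follows from Proposition~\ref{PropifC=kg} (or Theorem~\ref{thmirr_mod_stru}): $M(g,U)$ is irreducible whenever $U$ is, and every irreducible object of ${}_{kG}^{kG}\mathcal{YD}$ lies in some ${}_{H}^{k\mathcal{C}_g}\mathcal{M}$ and hence is isomorphic to some $M(g,U)$; taking one representative $g$ per conjugacy class yields the stated list.

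I expect the only real difficulty to be bookkeeping rather than substance: one must keep the left/right and opposite-algebra conventions straight so as to use the inversion-free map $\Theta(u\otimes(h\otimes g))=h\otimes u$ (the tempting variants sending $h\mapsto h^{-1}$ break either the balancing relations or the term $xgx^{-1}$ in the coaction), and one must carefully match the coaction formula of Proposition~\ref{PropifC=kg} with the $xgx^{-1}\otimes x\otimes u$ prescribed in the statement. Beyond this, everything is the direct specialization of results already established in this section.
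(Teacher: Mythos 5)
Your proposal is correct and follows the same route as the paper, whose entire proof is the one-line observation that $kC(g)\cong (N_{kg})^{op}$ as algebras, after which Proposition~\ref{PropifC=kg} gives the result. You have simply filled in the details (trivial $R$-matrix, identification of the minimal $H$-adjoint-stable subcoalgebras with the conjugacy classes, the explicit isomorphism $\Theta$) that the paper leaves implicit.
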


\begin{proof}
As algebras, $kC\left(  g\right)  \cong\left(  N_{kg}\right)  ^{op}$, then the
result follows.
\end{proof}

Now we give the structure of 1-dimensional Yetter-Drinfeld modules for a
quasi-triangular Hopf algebra $(H,R)$. Let $ZG(H)$ be the set of central
group-likes of $H$.

\begin{cor}
\label{cor_1-dim} Let $\left(  H,R\right)  $ be a quasi-triangular Hopf
algebra, then there is a 1-1 correspondence between the set of the
non-isomorphic 1-dimensional Yetter-Drinfeld modules in ${}_{H}^{H}%
\mathcal{YD}$ and the set
\[
ZG(H)\times\left\{  \text{the non-isomorphic 1-dimensional modules of
}H\right\}  .
\]

\end{cor}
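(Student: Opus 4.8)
The plan is to apply Theorem~\ref{thmirr_mod_stru} and the ideas of Proposition~\ref{PropifC=kg} to the special situation where the relevant minimal $H$-adjoint-stable subcoalgebra $D$ of $H_R$ is one-dimensional, and to translate the data $(W,U)$ into the data $(g,\chi)$ with $g\in ZG(H)$ and $\chi$ a one-dimensional $H$-module. First I would observe that a Yetter-Drinfeld module $V$ in ${}_{H}^{H}\mathcal{YD}={}_{H}^{H_R}\mathcal{M}$ is one-dimensional precisely when $V$ is a one-dimensional $H$-module and the $H_R$-coaction $\rho_R\colon V\to H_R\otimes V$ sends a basis vector $v$ to $g\otimes v$ for some element $g\in H_R=H$; the coassociativity and counitality of $\rho_R$ force $g$ to be a grouplike element of the coalgebra $(H_R,\Delta_R)$, i.e.\ $\Delta_R(g)=g\otimes g$ and $\varepsilon(g)=1$. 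The next step is to identify the grouplike elements of $H_R$ with $ZG(H)$: one checks from the formula $\Delta_R(h)=\sum h_{(1)}(SR^2)\otimes R^1\cdot_{ad}h_{(2)}$ that $g$ is $\Delta_R$-grouplike if and only if $g$ is grouplike in $H$ and $R^1\cdot_{ad}g\,\langle\cdot,\cdots\rangle$ collapses correctly; using $(\varepsilon\otimes\mathrm{id})(R)=(\mathrm{id}\otimes\varepsilon)(R)=1_H$ together with $\Delta(g)=g\otimes g$ one gets $\Delta_R(g)=g(SR^2)\otimes R^1\cdot_{ad}g = g(SR^2)R^1\otimes g$ when $g$ is also $\cdot_{ad}$-fixed, so the condition $\Delta_R(g)=g\otimes g$ becomes $g(SR^2)R^1 = g$, i.e.\ $g\mathbf{u}^{-1}$ behaves trivially; combined with $h\cdot_{ad}g=g$ for all $h$ (equivalently $g$ central) this pins $g$ down to lie in $ZG(H)$, and conversely every central grouplike is $\Delta_R$-grouplike.

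Having made that identification, I would run Proposition~\ref{PropifC=kg} with $W=kg$ for $g\in ZG(H)$: since $g$ is central, $N_W=\{h\in H\mid \sum h_{(1)}\cdot_{ad}g\otimes h_{(2)}=g\otimes h\}$ collapses to all of $H$ (because $h_{(1)}\cdot_{ad}g=\varepsilon(h_{(1)})g$ when $g$ is central, using $h_{(1)}g(Sh_{(2)})=\varepsilon(h_{(1)})g$). Thus the $R$-adjoint-stable algebra is $N_W\cong H$, and $U\otimes_{N_W}(H\otimes W)=U\otimes_H H\cong U$ as $H$-modules, with the $H_R$-coaction $\rho_R(u)=g\otimes u$. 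So the irreducible objects of ${}_{H}^{D}\mathcal{M}$ for $D=kg$ are exactly the one-dimensional $H$-modules, each equipped with the coaction through $g$. Conversely, by the decomposition ${}_{H}^{H}\mathcal{YD}=\oplus_{i=1}^r {}_{H}^{D_i}\mathcal{M}$ of Proposition~\ref{propHdecomp}, any one-dimensional Yetter-Drinfeld module lies in some ${}_{H}^{D_i}\mathcal{M}$ and forces $D_i$ to contain the grouplike $g$ detected above; minimality of $D_i$ then gives $D_i=H\cdot_{ad}(kg)=kg$ since $g$ is central. The correspondence $(g,\chi)\mapsto (\chi,\rho_R=g\otimes-)$ is therefore well-defined and surjective onto one-dimensional objects.

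Finally I would establish that the correspondence is injective up to isomorphism. If $(g,\chi)$ and $(g',\chi')$ give isomorphic one-dimensional Yetter-Drinfeld modules, then the isomorphism is in particular an $H$-module isomorphism, forcing $\chi\cong\chi'$; and it is an $H_R$-comodule isomorphism, so comparing the coactions $v\mapsto g\otimes v$ and $v'\mapsto g'\otimes v'$ on a one-dimensional space forces $g=g'$. Hence distinct pairs give non-isomorphic modules, and the map $ZG(H)\times\{\text{1-dim'l }H\text{-modules}\}/\!\cong\ \longrightarrow\ \{\text{1-dim'l objects of }{}_{H}^{H}\mathcal{YD}\}/\!\cong$ is a bijection. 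The main obstacle I anticipate is the bookkeeping in Step~2, namely verifying cleanly that $\Delta_R$-grouplikes of $H_R$ are exactly the central grouplikes of $H$ (one must handle the $\cdot_{ad}$-fixing condition and the $\mathbf{u}$-twist simultaneously), and confirming that $N_{kg}=H$ when $g$ is central; once these two points are settled, the rest is a direct application of Theorem~\ref{thmirr_mod_stru} and Proposition~\ref{PropifC=kg}.
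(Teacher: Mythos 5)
Your overall architecture is the same as the paper's: reduce to the one\--dimensional coalgebra $D=kg$, show $N_{kg}=H$, and apply Proposition~\ref{PropifC=kg}/Theorem~\ref{thmirr_mod_stru}; your computation $h_{(1)}\cdot_{ad}g=\varepsilon(h_{(1)})g\Rightarrow N_{kg}=H$ and your injectivity argument are both correct. The genuine gap is in your Step 2. The statement you propose to verify there --- that the $\Delta_R$-grouplikes of $H_R$ are exactly the central grouplikes of $H$ (and in particular are grouplike for $\Delta$) --- is false, and the paper's own Kac--Paljutkin example refutes it: there $G(H_R)=\{1,x,y,xy,g_1,g_2,g_3,g_4\}$ has eight elements while $ZG(H)=\{1,xy\}$ has two; the element $x$ is grouplike for both $\Delta_R$ and $\Delta$ but is not central, and $g_1,\dots,g_4$ are not grouplike for $\Delta$ at all. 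So knowing only that $\rho_R(v)=g\otimes v$ with $g\in G(H_R)$ gives you neither centrality nor grouplikeness in $H$. What actually forces $g\in ZG(H)$ is the Yetter--Drinfeld compatibility (\ref{eqb}) for the one\--dimensional module $V=kv$: writing $hv=\chi(h)v$, it yields $\sum h_{(1)}\cdot_{ad}g\,\chi(h_{(2)})=\chi(h)g$, hence $H\cdot_{ad}g\subseteq kg$; applying $\varepsilon$ and using $\varepsilon(g)=1$ gives $h\cdot_{ad}g=\varepsilon(h)g$, i.e.\ $g\in Z(H)$. Equivalently, $kg$ is an $H$-adjoint-stable subcoalgebra of $H_R$, hence a Yetter--Drinfeld submodule of $H$, and then $\Delta(g)\in H\otimes kg$ together with the counit axiom (or the paper's direct computation $\rho(g)=\sum gR^{2}\otimes R^{1}\cdot_{ad}g=g\otimes g$) gives $g\in G(H)$. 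You do invoke ``$h\cdot_{ad}g=g$ for all $h$'' at the point where you need it, but you never derive it from the module structure, and as a property of grouplikes of $H_R$ alone it is simply not available; your later sentence ``minimality of $D_i$ then gives $D_i=H\cdot_{ad}(kg)=kg$ since $g$ is central'' is circular for the same reason.

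There is also a computational error in the same step: from $\Delta_R(g)=\sum g\left(SR^{2}\right)\otimes R^{1}\cdot_{ad}g$ you cannot move $R^{1}\cdot_{ad}$ across the tensor sign to obtain $\sum g\left(SR^{2}\right)R^{1}\otimes g$. Once $g$ is known to be central one has $R^{1}\cdot_{ad}g=\varepsilon(R^{1})g$, and $(\varepsilon\otimes id_{H})(R)=1_{H}$ gives $\Delta_R(g)=g\otimes g$ outright; there is no residual condition ``$g\mathbf{u}=g$'' (which would wrongly force $\mathbf{u}=1$). Once Step 2 is repaired along the lines above, the rest of your argument goes through and coincides with the paper's proof.
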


\begin{proof}
Let $V=kv$ be a 1-dimensional Yetter-Drinfeld module in ${}_{H}^{H}%
\mathcal{YD}$. As a left $H_{R}$-comodule, $\rho_{R}\left(  v\right)
=g\otimes v,$ for a grouplike element $g\in G\left(  H_{R}\right)  $. Then
$D=kg$ is an $H$-adjoint-stable subcoalgebra of $H_{R}$, hence $g\in Z\left(
H\right)  .$ Moreover, $\rho\left(  g\right)  =\sum gR^{2}\otimes R^{1}%
\cdot_{ad}g=g\otimes g,$ i.e. $g\in G\left(  H\right)  .$ Conversely, for an
element $g\in$ $ZG(H),$ $D=kg$ is a Yetter-Drinfeld submodule of $H,$ and
$N_{kg}=H.$ For any $1$-dimensional $H$-module $V$, $V$ $\otimes_{N_{kg}}H$ is
a 1-dimensional Yetter-Drinfeld module. Explicitly, $\rho_{R}\left(
v\otimes1_{H}\right)  =g\otimes v\otimes1_{H}$, for $v\in V$.
\end{proof}

\begin{rem}
Similar statements in this paper can be proved for Yetter-Drinfeld modules in
$_{H}\mathcal{YD}^{H}$, $\mathcal{YD}_{H}^{H}$, and $^{H}\mathcal{YD}_{H}$.
\end{rem}

\subsection{Dimensions of $N_{W}$ and Applications }

Recall that an $H$-(co)module algebra (resp. $H$-module coalgebra) is called
$H$\textit{-simple} if it has no non-trivial $H$-(co)stable ideal (resp.
$H$-stable subcoalgebra).

Assume that $(H,R)$ is a semisimple and cosemisimple quasi-triangular Hopf
algebra over $k.$ Let $D$ be a minimal $H$-adjoint-stable subcoalgebra of
$H_{R}$, and let $W$ be a finite dimensional nonzero left $D$-comodule. We
will prove that the left $H$-comodule algebra $N_{W}$ is $H$-simple. By
Theorem \ref{thmDHM=NWM}, the functor $F=W^{\ast}\square_{D}\bullet:{}_{H}%
^{D}\mathcal{M}\rightarrow{}\mathcal{M}_{N_{W}}$ is a category equivalence.
Note that $_{H}^{D}\mathcal{M}$ is an indecomposable left module category over
$_{H^{cop}}\mathcal{M}$. Then there is a left $_{H^{cop}}\mathcal{M}$-module
category structure on $\mathcal{M}_{N_{W}}$, such that $F$ is a module
functor. Specifically, the module action $\otimes:{}_{H^{cop}}\mathcal{M}%
\times\mathcal{M}_{N_{W}}\rightarrow\mathcal{M}_{N_{W}}$ is given by $X\otimes
U=X\otimes_{k}U$ for $X\in{}_{H^{cop}}\mathcal{M}$ and $U\in{}\mathcal{M}%
_{N_{W}}$ with right $N_{W}$-action
\[
\left(  x\otimes u\right)  a=\sum\bar{S}\left(  a_{\left\langle
-1\right\rangle }\right)  x\otimes ua_{\left\langle 0\right\rangle }%
,\quad\forall a\in N_{W},x\in X,u\in U.
\]
Since $_{H}^{D}\mathcal{M}$ is an indecomposable $_{H^{cop}}\mathcal{M}%
$-module category, $\mathcal{M}_{N_{W}}$ is also indecomposable. Thus we have

\begin{prop}
Let $D$ be a minimal $H$-adjoint-stable subcoalgebra of $H_{R}$, and let $W$
be a finite dimensional nonzero left $D$-comodule. Then $N_{W}$ is $H$-simple
as a left $H$-comodule algebra.
\end{prop}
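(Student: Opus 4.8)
The plan is to argue by contradiction. Suppose $N_{W}$ carries a nonzero proper $H$-costable two-sided ideal $I$; from $I$ I will manufacture a nontrivial decomposition of $\mathcal{M}_{N_{W}}$ as a module category over $\mathcal{C}={}_{H^{cop}}\mathcal{M}$, contradicting the indecomposability of $\mathcal{M}_{N_{W}}$ recorded just before the statement (which was transported from the indecomposability of ${}_{H}^{D}\mathcal{M}$ in Lemma~\ref{lemDHMirr} along the equivalence of Theorem~\ref{thmDHM=NWM}). The candidate summand is the full subcategory $\mathcal{N}\subseteq\mathcal{M}_{N_{W}}$ consisting of the right $N_{W}$-modules annihilated by $I$, i.e.\ of the right $N_{W}/I$-modules. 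Since $N_{W}$ is semisimple by Theorem~\ref{thmDHM=NWM}, $\mathcal{N}$ is closed under subobjects, quotients and finite direct sums; it is nonzero because it contains the regular module of $N_{W}/I$; and it is proper because the regular $N_{W}$-module is not in it, as $N_{W}\cdot I=I\neq0$.

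The one step that requires a computation is that $\mathcal{N}$ is stable under the module action $X\otimes U=X\otimes_{k}U$. For $a\in I$, the $H$-costability of $I$ gives $\rho(a)=\sum a_{\left\langle -1\right\rangle }\otimes a_{\left\langle 0\right\rangle }\in H\otimes I$, so $a_{\left\langle 0\right\rangle }\in I$. Hence, for $X\in{}_{H^{cop}}\mathcal{M}$ and $U\in\mathcal{N}$, the explicit right $N_{W}$-action on $X\otimes_{k}U$ satisfies $(x\otimes u)a=\sum\bar{S}(a_{\left\langle -1\right\rangle })x\otimes ua_{\left\langle 0\right\rangle }=0$, because $ua_{\left\langle 0\right\rangle }=0$. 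Thus $X\otimes U$ is again killed by $I$, so $\mathcal{N}$ is a $\mathcal{C}$-module subcategory.

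To finish, I would invoke the standard fact that a module subcategory of a semisimple module category over a fusion category splits off as a direct summand. Concretely, since $\mathcal{C}$ is rigid, the orthogonal complement $\mathcal{N}^{\perp}$ — the full subcategory spanned by the simple $N_{W}$-modules not lying in $\mathcal{N}$ — is again a $\mathcal{C}$-module subcategory: if some simple constituent of $X\otimes V$ lay in $\mathcal{N}$ for a simple $V\notin\mathcal{N}$ and a simple $X\in\mathcal{C}$, then $\operatorname{Hom}_{\mathcal{M}_{N_{W}}}(X\otimes V,U')\neq0$ for some simple $U'\in\mathcal{N}$, so by rigidity of $X$ also $\operatorname{Hom}_{\mathcal{M}_{N_{W}}}(V,X^{\ast}\otimes U')\neq0$, forcing $V\in\mathcal{N}$ since $\mathcal{N}$ is action-stable, a contradiction. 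Hence $\mathcal{M}_{N_{W}}=\mathcal{N}\oplus\mathcal{N}^{\perp}$ is a decomposition of module categories with $\mathcal{N}\neq0$, and indecomposability forces $\mathcal{N}=\mathcal{M}_{N_{W}}$; then $I$ annihilates every $N_{W}$-module, so $I=0$, a contradiction. Therefore $N_{W}$ has no nonzero proper $H$-costable ideal, i.e.\ it is $H$-simple as a left $H$-comodule algebra.

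The only genuine obstacle is the bookkeeping: verifying that the ideal-annihilator subcategory really is a $\mathcal{C}$-module subcategory (the short $\bar{S}$-computation above, which is exactly where $H$-costability of $I$ is used) and recalling why module subcategories of semisimple module categories over fusion categories are direct summands. Everything else is immediate from the equivalence $F=W^{\ast}\square_{D}\bullet$ and the module-category structure on $\mathcal{M}_{N_{W}}$ set up right before the statement.
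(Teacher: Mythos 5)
Your argument is correct and follows essentially the same route as the paper: both transport the indecomposability of ${}_{H}^{D}\mathcal{M}$ to $\mathcal{M}_{N_{W}}$ as a module category over ${}_{H^{cop}}\mathcal{M}$ via the equivalence of Theorem~\ref{thmDHM=NWM} and the explicit action $\left(x\otimes u\right)a=\sum\bar{S}\left(a_{\left\langle -1\right\rangle}\right)x\otimes ua_{\left\langle 0\right\rangle}$, and then conclude $H$-simplicity. The only difference is that the paper leaves the final implication (indecomposability of $\mathcal{M}_{N_{W}}$ forces $N_{W}$ to have no nonzero proper $H$-costable ideal) implicit as a standard fact, whereas you spell it out correctly via the annihilator subcategory of an $H$-costable ideal and its complement.
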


The following result due to Skryabin will be used to get a formula on the
dimension of $N_{W}$.

\begin{lem}
[{{\cite[Theorem 4.2]{skryabin2007projectivity}}}]\label{lemskryabin}Let $A$
be a finite dimensional semisimple right $H$-simple comodule algebra. Let
$0\neq M\in\mathcal{M}_{A}^{H}$ (resp. $_{A}\mathcal{M}^{H}$), $\dim
M<\infty.$ Then $M^{t}$ is free as a right (resp. left) $A$-module for some
$t\in\mathbb{N}^{+}.$

\begin{proof}
This result follows from the proof of \cite[Theorem 4.2]%
{skryabin2007projectivity}.
\end{proof}
\end{lem}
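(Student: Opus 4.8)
The plan is to deduce this statement from \cite[Theorem 4.2]{skryabin2007projectivity}, reconstructing the finite dimensional semisimple case from the ingredients of Skryabin's argument; I will treat right $A$-modules, the left-handed version being symmetric. The first observation is that, since $A$ is finite dimensional and semisimple, every object of $\mathcal{M}_{A}^{H}$ is automatically projective --- indeed semisimple --- as a right $A$-module, so projectivity is not the issue; the issue is the multiplicities. Recall that $\mathcal{M}_{A}^{H}$ is the module category of a finite dimensional algebra (a smash product of $A$ with $H^{\ast}$), hence an abelian category with the Jordan--H\"older property. Let $S_{1},\dots,S_{m}$ be the simple right $A$-modules, and for a finite dimensional right $A$-module $X$ write $[X]=\sum_{i}[X:S_{i}]\,[S_{i}]\in K_{0}(A)$. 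Since a finite dimensional module over the semisimple algebra $A$ is determined up to isomorphism by its composition multiplicities, the assertion ``$M^{t}$ is free over $A$ for some $t\in\mathbb{N}^{+}$'' is equivalent to the statement that $[M]$ is a positive rational multiple of $[A]$ in $K_{0}(A)\otimes\mathbb{Q}$.

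Next I would reduce to the case that $M$ is a \emph{simple} object of $\mathcal{M}_{A}^{H}$. A short exact sequence in $\mathcal{M}_{A}^{H}$ restricts to a (necessarily split, as $A$ is semisimple) short exact sequence of right $A$-modules, so $X\mapsto[X]$ is additive along composition series in $\mathcal{M}_{A}^{H}$; consequently $[M]$ is a nonnegative integral combination of the classes $[T]$ of the composition factors $T$ of $M$. Hence it suffices to prove that $[T]$ is a positive rational multiple of $[A]$ for every simple $T\in\mathcal{M}_{A}^{H}$: granting this, if $[T_{j}]=\lambda_{j}[A]$ for the composition factors $T_{j}$ of $M$ and $t$ is a common denominator of the $\lambda_{j}$, then $[M^{t}]=t[M]=\bigl(\sum_{j}t\lambda_{j}\bigr)[A]\in\mathbb{Z}_{\geq0}[A]$, so $M^{t}$ is free as a right $A$-module.

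The remaining step is the heart of the matter, and the one I expect to be the real obstacle --- it is precisely Skryabin's theorem and does not follow from soft homological considerations: for each simple $T\in\mathcal{M}_{A}^{H}$ one must show that $T$ restricts to a \emph{balanced} right $A$-module, i.e. $[T:S_{i}]\,[A:S_{j}]=[T:S_{j}]\,[A:S_{i}]$ for all $i,j$. This is exactly where right $H$-simplicity of $A$ enters: the free rank-one object $A$ is itself a \emph{simple} object of $\mathcal{M}_{A}^{H}$, its subobjects there being precisely the $H$-costable right ideals of $A$. Following \cite{skryabin2007projectivity}, the idea is to use that the induced $H^{\ast}$-action on $A$ admits no proper nonzero invariant ideal --- equivalently the relative injectivity of $A$ inside $\mathcal{M}_{A}^{H}$, which $H$-simplicity supplies --- to force the simple right $A$-constituents of an arbitrary relative Hopf module to occur in exactly the proportions in which they occur in $A$; this rigidity is what pins $[T]$ to a rational multiple of $[A]$. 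In the situations of the present paper $A$ is the $R$-adjoint-stable algebra $N_{W}$, shown just above to be $H$-simple as a comodule algebra, with $H$ semisimple and cosemisimple, so all the finiteness and semisimplicity hypotheses are in force and the only genuine input needed from \cite{skryabin2007projectivity} is this balancing statement.
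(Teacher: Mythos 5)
The paper gives no argument of its own for this lemma: its entire proof is the one-sentence citation to the proof of \cite[Theorem 4.2]{skryabin2007projectivity}. Your proposal is, in substance, the same --- the balancing of $A$-composition multiplicities for simple objects of $\mathcal{M}_{A}^{H}$, which you correctly identify as the heart of the matter, is exactly what you leave to Skryabin. Your preliminary reductions are correct and even clarifying: for a nonzero finite dimensional module over the semisimple algebra $A$, ``$M^{t}$ is free for some $t\in\mathbb{N}^{+}$'' is indeed equivalent to $[M]\in\mathbb{Q}_{>0}[A]$ in $K_{0}(A)\otimes\mathbb{Q}$, and exactness of the restriction functor $\mathcal{M}_{A}^{H}\to\mathcal{M}_{A}$ along a composition series legitimately reduces the claim to simple relative Hopf modules.

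One assertion in your gloss on the key step is false, however, and it misstates where $H$-simplicity enters. You claim that the rank-one free object $A$ is itself a simple object of $\mathcal{M}_{A}^{H}$, ``its subobjects there being precisely the $H$-costable right ideals of $A$.'' The subobject description is right, but $H$-simplicity of a comodule algebra forbids only nontrivial $H$-costable \emph{two-sided} ideals; it says nothing about costable right ideals, and $A$ is in general not simple in $\mathcal{M}_{A}^{H}$ (take $A=M_{n}(k)$ with $n\geq 2$ and the trivial $H$-coaction: it is $H$-simple, yet every right ideal is a subobject). The parenthetical identification of $H$-simplicity with relative injectivity of $A$ in $\mathcal{M}_{A}^{H}$ is likewise not a correct paraphrase. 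Since you defer the balancing statement to \cite{skryabin2007projectivity} in any case, this does not break the proof, but the sentence should be deleted or corrected: the hypothesis actually exploited in Skryabin's argument is the absence of proper nonzero costable two-sided ideals, not simplicity of $A$ as an object of the relative Hopf module category.
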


\begin{prop}
\label{propdimUctW}Let $D$ be a minimal $H$-adjoint-stable subcoalgebra of
$H_{R}$, and let $W$ be a finite dimensional nonzero left $D$-comodule. If
$V\in{}_{H}^{D}\mathcal{M}$ and $\dim V<\infty$, then
\[
\dim\left(  W^{\ast}\square_{D}V\right)  \dim D=\dim V\dim W.
\]
In particular, for finite dimensional left $D$-comodules $W,W^{\prime}$, we
have
\[
\dim N_{WW^{\prime}}\dim D=\dim H\dim W\dim W^{\prime}.
\]

\end{prop}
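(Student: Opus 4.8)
The plan is to exploit the block decomposition of the cosemisimple coalgebra $D$ and to show that any $V\in{}_H^D\mathcal{M}$ is ``equidistributed'' over these blocks in exactly the proportion dictated by $D$ itself. First I would dispose of the ``in particular'' clause: since $H\otimes W'\in{}_H^D\mathcal{M}$ with $\dim(H\otimes W')=\dim H\dim W'$, it is the special case $V=H\otimes W'$ of the first formula, so it suffices to prove $\dim(W^{\ast}\square_{D}V)\dim D=\dim V\dim W$. By Proposition~\ref{propcos.s} the coalgebra $D$ is cosemisimple; write $D=\bigoplus_{\alpha}D_{\alpha}$ with each $D_{\alpha}$ a simple subcoalgebra, and decompose $W$ and $V$ accordingly into $D_{\alpha}$-isotypic components $W^{(\alpha)}$, $V^{(\alpha)}$. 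The cotensor product then splits as $W^{\ast}\square_{D}V=\bigoplus_{\alpha}(W^{(\alpha)})^{\ast}\square_{D_{\alpha}}V^{(\alpha)}$, and since the category of $D_{\alpha}$-comodules is equivalent to the category of vector spaces over the division algebra $\operatorname{End}^{D_{\alpha}}(S_{\alpha})$ (with $S_{\alpha}$ the simple $D_{\alpha}$-comodule), a routine count over each block — using $W^{\ast}\square_{D}V\cong\operatorname{Hom}^{D}(W,V)$ — gives the ``local'' identity $\dim\!\big((W^{(\alpha)})^{\ast}\square_{D_{\alpha}}V^{(\alpha)}\big)\,\dim D_{\alpha}=\dim W^{(\alpha)}\dim V^{(\alpha)}$.

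The crux is then: $\dim V^{(\alpha)}/\dim D_{\alpha}$ does not depend on $\alpha$ (for any $V\in{}_H^D\mathcal{M}$). Granting this — call the common value $c$, so that $\dim V=c\dim D$ — summing the local identities yields $\dim(W^{\ast}\square_{D}V)\dim D=\sum_{\alpha}\dim W^{(\alpha)}\dim V^{(\alpha)}(\dim D/\dim D_{\alpha})=c\dim D\sum_{\alpha}\dim W^{(\alpha)}=\dim V\dim W$, as desired. To prove the independence, I would attach to $V\in{}_H^D\mathcal{M}$ its comodule character $\chi_{V}\in D$, namely the trace of the coaction $\rho_{R}\colon V\to D\otimes V$, and prove two things. (i) $\chi_{V}$ is central in $H$: a short computation from coassociativity of $\rho_{R}$, the compatibility~(\ref{eqb}), and the Larson--Radford identity $S^{2}=\operatorname{id}_{H}$ (available because $H$ is semisimple and cosemisimple) gives $h\cdot_{ad}\chi_{V}=\varepsilon(h)\chi_{V}$ for all $h\in H$, so $\chi_{V}\in Z(H)\cap D$. (ii) Pairing with the block idempotent $e_{\alpha}\in D^{\ast}$ recovers dimensions: $\langle e_{\alpha},\chi_{V}\rangle=\dim V^{(\alpha)}$, and likewise $\langle e_{\alpha},\chi_{D}\rangle=\dim D_{\alpha}$ for the regular object $D\in{}_H^D\mathcal{M}$. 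So if one knows that $Z(H)\cap D$ is one-dimensional, it is spanned by $\chi_{D}$, whence $\chi_{V}=c\,\chi_{D}$ and pairing with the $e_{\alpha}$ completes the argument.

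The main obstacle is exactly this one-dimensionality: that the space of $\cdot_{ad}$-invariant elements of a single minimal $H$-adjoint-stable subcoalgebra $D$ of $H_{R}$ is a line. I would establish it by identifying $Z(H)\cap D$ with $\operatorname{Hom}_{H}^{D}(D,D)=\operatorname{End}_{{}_H^D\mathcal{M}}(D)$ — the $\cdot_{ad}$-invariants of $D$ correspond, via the coadjoint action of $H$ on the convolution algebra $(D^{\ast},\ast_{R})$ together with semisimplicity of $H$, to the morphisms of $D$ in ${}_H^D\mathcal{M}$ — so that, $D$ being irreducible in ${}_H^D\mathcal{M}$ by Proposition~\ref{propHdecomp}, this space is a division $k$-algebra, hence equals $k$ when $k$ is algebraically closed. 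In general one invokes that $\dim_{k}Z(H)$ equals the number $r$ of summands in $H_{R}=D_{1}\oplus\cdots\oplus D_{r}$ (the semisimple-Hopf-algebra version of ``\#conjugacy classes $=$ \#irreducibles'', cf.~\cite{cohen2011conjugacy}), which combined with $Z(H)=\bigoplus_{i}(Z(H)\cap D_{i})$ and the non-vanishing of each summand forces $\dim_{k}(Z(H)\cap D)=1$. The remaining steps — exactness of $\square_{D}$ and the block splitting, the local dimension identity, and the centrality computation for $\chi_{V}$ — are straightforward.
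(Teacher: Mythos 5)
Your route is genuinely different from the paper's, and it has a real gap at exactly the point you flag as the crux. The paper proves the whole statement in one stroke from Skryabin's freeness theorem (Lemma~\ref{lemskryabin}): since $D$ is a left $H$-simple module coalgebra under $\cdot_{ad}$, every finite dimensional $V\in{}_{H}^{D}\mathcal{M}$ satisfies $V^{t}\cong D^{s}$ as left $D$-comodules for some $s,t$, whence $t\dim V=s\dim D$ and $\left(W^{\ast}\square_{D}V\right)^{t}\cong W^{\ast}\square_{D}D^{s}\cong\left(W^{\ast}\right)^{s}$. This gives your ``equidistribution over the blocks'' for free (restrict $V^{t}\cong D^{s}$ to each simple subcoalgebra $D_{\alpha}$), as an honest identity of integers, over any base field. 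Your substitute for it does not reach that strength. First, the trace mechanism only ever produces equalities in $k$: from $\chi_{V}=c\,\chi_{D}$ you get $\dim V^{(\alpha)}\equiv c\dim D_{\alpha}$ as elements of $k$, and in positive characteristic two rational numbers congruent mod $p$ need not be equal, so the asserted independence of $\dim V^{(\alpha)}/\dim D_{\alpha}$ from $\alpha$ (as a rational number) does not follow. The proposition is stated for semisimple cosemisimple $H$ over an arbitrary field, so this is not a cosmetic restriction.

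Second, even in characteristic zero, your two proposed proofs that $\dim_{k}\left(Z(H)\cap D\right)=1$ are not both sound. Route (a) — $\dim D^{ad}=\dim\operatorname{Hom}_{H}(D,k)=\dim\operatorname{Hom}_{H}^{D}(D,D)$, the last space being a division algebra since $D$ is irreducible — does work, but only over an algebraically closed field; over a general field $\operatorname{End}_{{}_{H}^{D}\mathcal{M}}(D)$ can be a division algebra of dimension $>1$, and then $Z(H)\cap D$ is not a line and $\chi_{V}$ need not be proportional to $\chi_{D}$. Route (b) rests on the claim that $\dim_{k}Z(H)$ equals the number $r$ of minimal $H$-adjoint-stable subcoalgebras of $H_{R}$; this class-equation statement is not proved in the paper, is not what \cite{cohen2011conjugacy} establishes in this generality (their results concern the character algebra under additional hypotheses), and in any case is logically equivalent to the one-dimensionality you are trying to prove, given that $Z(H)=\bigoplus_{i}\left(Z(H)\cap D_{i}\right)$ with each summand nonzero — so as written it is circular unless you import an independent proof. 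The local block identity and the reduction of the ``in particular'' clause are fine; but to repair the argument you would either have to restrict the hypotheses (algebraically closed, characteristic zero) or replace the character argument by the freeness argument, at which point you have reproduced the paper's proof.
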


\begin{proof}
By assumption, $D$ is a left $H$-simple module coalgebra with $H$-action
$\cdot_{ad}$. Using Lemma \ref{lemskryabin}, we have some $\left(  s,t\right)
\in\mathbb{N}^{+}\times\mathbb{N}^{+},$ such that $V^{t}\cong\left(  D^{\ast
}\right)  ^{s}$ as right $D^{\ast}$-modules. So $V^{t}\cong D^{s}$ as left
$D$-comodules, then
\[
t\dim V=s\dim D.
\]
On the other hand,
\[
\left(  W^{\ast}\square_{D}V\right)  ^{t}\cong W^{\ast}\square_{D}V^{t}\cong
W^{\ast}\square_{D}D^{s}\cong W^{\ast s}.
\]
Therefore,%
\[
\dim\left(  W^{\ast}\square_{D}V\right)  =\dfrac{s}{t}\dim W=\dfrac{\dim V\dim
W}{\dim D}.
\]
Particularly, let $V=H\otimes W^{\prime}$, then we get
\[
\dim N_{WW^{\prime}}\dim D=\dim H\dim W\dim W^{\prime}.
\]

\end{proof}

\begin{prop}
Assume that $k$ is algebraically closed of characteristic zero. Let $D$ be a
minimal $H$-adjoint-stable subcoalgebra of $H_{R}$. If $W$ is a finite
dimensional nonzero left $D$-comodule, then for any irreducible right $N_{W}$-module
$U$,
\[
(\dim U\dim W)\mid\dim N_{W}.
\]
Especially, $N_{W}$ is an algebra over which the dimension of each irreducible
right $N_{W}$-module divides the dimension of $N_{W}$.
\end{prop}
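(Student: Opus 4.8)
The plan is to transport everything through the module-category equivalence of Theorem~\ref{thmDHM=NWM} and then feed Skryabin's freeness result into the dimension count of Proposition~\ref{propdimUctW}. First I would fix an irreducible right $N_W$-module $U$ and put $V=U\otimes_{N_W}(H\otimes W)$; by Theorem~\ref{thmirr_mod_stru} this is an irreducible object of ${}_H^D\mathcal{M}$, and by Theorem~\ref{theorem-G-is-qinv}(1) together with Proposition~\ref{propiHomHotW} one gets $U\cong W^{\ast}\square_D V$. Applying Proposition~\ref{propdimUctW} twice yields
\[
(\dim U)(\dim D)=(\dim V)(\dim W),\qquad (\dim N_W)(\dim D)=(\dim H)(\dim W)^2,
\]
so that $\dim N_W/\bigl((\dim U)(\dim W)\bigr)=\dim H/\dim V$. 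Hence the assertion $(\dim U)(\dim W)\mid\dim N_W$ is equivalent to $\dim V\mid\dim H$, and once it is proved the final ``especially'' is immediate since $\dim U\mid(\dim U)(\dim W)$.

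To establish $\dim V\mid\dim H$ I would use that $N_W$ is finite dimensional, semisimple (Theorem~\ref{thmDHM=NWM}) and $H$-simple as a left $H$-comodule algebra (the proposition preceding Lemma~\ref{lemskryabin}). Since $k$ is algebraically closed, the identification $N_W\cong\operatorname{Hom}_H^D(H\otimes W,H\otimes W)$ together with Schur's lemma gives $N_W\cong\bigoplus_j M_{a_j}(k)$, where $a_j$ is the multiplicity in $H\otimes W$ of the $j$-th simple object $V_j$ of ${}_H^D\mathcal{M}$; thus the simple $N_W$-modules have dimensions $a_j$ and $\dim N_W=\sum_j a_j^2$. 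I would then apply Skryabin's theorem (Lemma~\ref{lemskryabin} and the surrounding analysis of \cite{skryabin2007projectivity}) to the relative Hopf modules over $N_W$ --- concretely the modules $N_{WW'}=W^{\ast}\square_D(H\otimes W')$ with the left $H$-coaction $\rho(x)=\sum S(h_{i(2)})\otimes w_i^{\ast}\otimes h_{i(1)}\otimes w_i'$ --- taking $W'$ to be a simple $D$-subcomodule of $V$, so that $U$ occurs as a direct summand of $N_{WW'}$; making a suitable power of $N_{WW'}$ free over $N_W$ and combining with the two identities above should force $(\dim U)(\dim W)\mid\dim N_W$.

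The main obstacle is exactly this last step. Skryabin's freeness statement on its own only says that the multiplicity vector of a relative Hopf module over $N_W$ is proportional to $(a_j)_j$, which is already a consequence of Proposition~\ref{propdimUctW}; to get the genuine divisibility one has to exploit more of the comodule-algebra structure of $N_W$ than bare $H$-simplicity --- namely that $\mathcal{M}_{N_W}\simeq{}_H^D\mathcal{M}$ is realized as an \emph{indecomposable} module category over the fusion category ${}_{H^{cop}}\mathcal{M}$ (Lemma~\ref{lemDHMirr}, Proposition~\ref{prop-C=HM-equiva}), together with the finer projectivity/freeness results of \cite{skryabin2007projectivity}. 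Everything else in the argument is routine bookkeeping with the two dimension identities.
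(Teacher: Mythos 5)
Your reduction is correct and is essentially the same bookkeeping the paper performs: setting $V=U\otimes_{N_W}(H\otimes W)$, the identity $\dim N_W/(\dim U\dim W)=\dim H/\dim V$ is exactly the paper's equation (the paper obtains it by applying Lemma~\ref{lemskryabin} to $H\otimes W\in{}_{N_{W}}^{H}\mathcal{M}$ to get $(H\otimes W)^{t}\cong(N_W)^{s}$ as left $N_W$-modules, hence $V^{t}\cong U^{s}$, rather than by invoking Proposition~\ref{propdimUctW} twice, but the two computations are interchangeable). So, as you say, everything hinges on showing $\dim V\mid\dim H$.

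That is where your proposal has a genuine gap, and you have diagnosed it yourself: neither Skryabin's freeness theorem nor the indecomposability of ${}_{H}^{D}\mathcal{M}$ as a module category over ${}_{H^{cop}}\mathcal{M}$ will yield the divisibility $\dim V\mid\dim H$. Those tools only give proportionality of multiplicity vectors, i.e.\ they reproduce the rational identity you already have; no amount of extra freeness over $N_W$ turns the ratio $\dim H/\dim V$ into an integer. The missing ingredient is an external input: the theorem of Etingof and Gelaki \cite[Theorem 1.4]{Etingof1997Some} that, over an algebraically closed field of characteristic zero, the dimension of every irreducible module over the Drinfeld double of a semisimple Hopf algebra --- equivalently, of every irreducible object of ${}_{H}^{H}\mathcal{YD}$ --- divides $\dim H$. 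Since $V$ is irreducible in ${}_{H}^{H}\mathcal{YD}$ by Theorem~\ref{thmirr_mod_stru}, this gives $\dim V\mid\dim H$ immediately, and your dimension identities then finish the proof. Without citing (or reproving) this Frobenius-type divisibility theorem, the argument cannot be completed by the categorical and freeness machinery alone.
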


\begin{proof}
Let $V=U\otimes_{N_{W}}\left(  H\otimes W\right)  .$ According to Theorem
\ref{thmDHM=NWM}, $V$ is an irreducible object of ${}_{H}^{H}\mathcal{YD}$. By
the result of Etingof and Gelaki \cite[Theorem 1.4]{Etingof1997Some}, $\dim
V\mid\dim H.$ On the other hand, observe that $H\otimes W\in{}_{N_{W}}%
^{H}\mathcal{M}$ with $H$-coaction given by $\rho\left(  h\otimes w\right)
=\sum S\left(  h_{\left(  2\right)  }\right)  \otimes h_{\left(  1\right)
}\otimes w$, where $h\in H,$ $w\in W$. Since $N_{W}$ is $H$-simple, by Lemma
\ref{lemskryabin}, there exists some $\left(  s,t\right)  \in\mathbb{N}%
^{+}\times\mathbb{N}^{+}$, such that $\left(  H\otimes W\right)  ^{t}%
\cong\left(  N_{W}\right)  ^{s}$ as left $N_{W}$-modules. On one hand,
\[
\dfrac{s}{t}=\dfrac{\dim H\dim W}{\dim N_{W}}.
\]
On the other hand, as a linear space
\begin{align*}
V^{t}  &  \cong\left(  U\otimes_{N_{W}}\left(  H\otimes W\right)  \right)
^{t}\cong U\otimes_{N_{W}}\left(  H\otimes W\right)  ^{t}\\
&  \cong U\otimes_{N_{W}}\left(  N_{W}\right)  ^{s}\\
&  \cong U^{s}.
\end{align*}
Then we obtain $\dfrac{\dim H\dim W}{\dim N_{W}}=\dfrac{\dim V}{\dim U}$,
whence $\dfrac{\dim N_{W}}{\dim U\dim W}=\dfrac{\dim H}{\dim V}$ is an integer.
\end{proof}

\begin{exam}
Let $k=\mathbb{C}$, and $H=H_{8}$ be the Kac-Paljutkin algebra, a semisimple
Hopf algebra constructed in \cite{masuoka1995semisimple}. It is generated as
algebra by $x,y,z$ with relations
\[
x^{2}=y^{2}=1,\text{ }z^{2}=\dfrac{1}{2}\left(  1+x+y-xy\right)  ,\text{
}xy=yx,\text{ }zx=yz,\text{ }zy=xz.
\]
Its coalgebra structure and antipode is determined by%
\begin{align*}
\Delta\left(  x\right)   &  =x\otimes x,\text{ }\Delta\left(  y\right)
=y\otimes y,\text{ }\\
\Delta\left(  z\right)   &  =\dfrac{1}{2}\left(  1\otimes1+1\otimes
x+y\otimes1-y\otimes x\right)  \left(  z\otimes z\right)  ,\\
\varepsilon\left(  x\right)   &  =\varepsilon\left(  y\right)  =\varepsilon
\left(  z\right)  =1,\text{ }S\left(  x\right)  =x,\text{ }S\left(  y\right)
=y,\text{ }S\left(  z\right)  =z.
\end{align*}
We found that $H$ is quasi-triangular with R-matrix
\[
R=\dfrac{1}{2}\left(  1\otimes1+1\otimes x+y\otimes1-y\otimes x\right)  .
\]
As a coalgebra, $H_{R}$ is pointed with grouplike elements $G\left(
H_{R}\right)  =\left\{  1,x,y,xy,g_{1},g_{2},g_{3},g_{4}\right\}  ,$ where
\begin{align*}
g_{1}  &  =\dfrac{1+i}{2}z+\dfrac{1-i}{2}yz,\text{ }g_{2}=\dfrac{1-i}%
{2}xz+\dfrac{1+i}{2}xyz,\text{ }\\
g_{3}  &  =\dfrac{1-i}{2}z+\dfrac{1+i}{2}yz,\text{ }g_{4}=\dfrac{1+i}%
{2}xz+\dfrac{1-i}{2}xyz.\text{ }%
\end{align*}
Moreover, we have $D_{1}=k1,$ $D_{2}=kxy,$ $D_{3}=kx\oplus ky,$ $D_{4}%
=kg_{1}\oplus kg_{2},$ $D_{5}=kg_{3}\oplus kg_{4}$ are all the minimal
$H$-adjoint-stable subcoalgebras of $H_{R}$. For each grouplike element, we
get the $R$-adjoint-stable algebras:
\begin{align*}
N_{k1}  &  =N_{kxy}=H,\\
N_{kx}  &  =N_{ky}=\mathrm{span}\left\{  1,x,y,xy\right\}  ,\\
N_{kg_{1}}  &  =N_{kg_{2}}=\mathrm{span}\left\{  1,xy,z+ixz,iyz+xyz\right\}
,\\
N_{kg_{3}}  &  =N_{kg_{4}}=\mathrm{span}\left\{  1,xy,iz+xz,yz+ixyz\right\}  .
\end{align*}
Because $N_{kg_{j}}$ $\left(  1\leq j\leq4\right)  $ are 4-dimensional
semisimple algebras and all contain at least two central elements $1,xy,$
$N_{kg_{j}}$ are commutative. Then using Proposition \ref{PropifC=kg} all the
irreducible Yetter-Drinfeld modules over $H$ can be obtained. Moreover, the
number and the dimensions of irreducible Yetter-Drinfeld modules corresponding
to each $D_{j}$ ($1\leq j\leq5$) are given by the following table:
\[%
\begin{tabular}
[c]{l|l|c|c}%
\begin{tabular}
[c]{l}%
simple YD sub-\\
module $D$ of $H$%
\end{tabular}
&
\begin{tabular}
[c]{l}%
$N_{W}$ where $W$ is a sim-\\
ple left coideal of $D$%
\end{tabular}
& \#$\{V\in\operatorname{Irr}\left(  {}_{H}^{D}\mathcal{M}\right)  \}$ &
\begin{tabular}
[c]{l}%
the list of $\dim V$,\\
where $V\in\operatorname{Irr}\left(  {}_{H}^{D}\mathcal{M}\right)  $%
\end{tabular}
\\\hline\hline
\multicolumn{1}{c|}{$D_{1}$} & \multicolumn{1}{|c|}{$H$} & $5$ & $1,1,1,1,2$\\
\multicolumn{1}{c|}{$D_{2}$} & \multicolumn{1}{|c|}{$H$} & $5$ & $1,1,1,1,2$\\
\multicolumn{1}{c|}{$D_{3}$} & \multicolumn{1}{|c|}{$4$-dim comm. algebra} &
$4$ & $2,2,2,2$\\
\multicolumn{1}{c|}{$D_{4}$} & \multicolumn{1}{|c|}{$4$-dim comm. algebra} &
$4$ & $2,2,2,2$\\
\multicolumn{1}{c|}{$D_{5}$} & \multicolumn{1}{|c|}{$4$-dim comm. algebra} &
$4$ & $2,2,2,2$%
\end{tabular}
\ \
\]

Therefore, there are 22 non-isomorphic irreducible Yetter-Drinfeld modules
over $H_{8}$. The irreducible Yetter-Drinfeld modules over $H_{8}$ were also
listed in \cite{Hu2007,Shi2016Finite}.

Clearly, $ZG(H)=\left\{  1,xy\right\}  $, and there are four 1-dimensional
$H$-modules. Therefore by Corollary~\ref{cor_1-dim} there are eight
1-dimensional Yetter-Drinfeld modules in ${}_{H}^{H}\mathcal{YD}$, which are
counted in the table.
\end{exam}

\noindent\textbf{{\large Acknowledgement.}} The authors would like to thank
the referee for his/her technical advice and helpful comments.

%----------------------------------------------------------------------


\begin{thebibliography}{99}                                                                                               %


\bibitem {Andruskiewitsch1998Braided}N.~Andruskiewitsch and M.~Gra\~{n}a.
\newblock Braided {H}opf algebras over non-abelian finite groups.
\newblock {\em Bol. Acad. Nac. Cienc. (C\'{o}rdoba)}, 63:45--78, 1999.
\newblock Colloquium on Operator Algebras and Quantum Groups (Spanish)
(Vaquer\'{\i}as, 1997).

\bibitem {andruskiewitsch2007module}N.~Andruskiewitsch and J.~M. Mombelli.
\newblock On module categories over finite-dimensional {H}opf algebras.
\newblock {\em J. Algebra}, 314(1):383--418, 2007.

\bibitem {cohen2010higman}M.~Cohen and S.~Westreich. \newblock Higman ideals
and {V}erlinde-type formulas for {H}opf algebras. \newblock In \emph{Ring and
module theory}, Trends Math., pages 91--114. Birkh\"{a}user/Springer Basel AG,
Basel, 2010.

\bibitem {cohen2010structure}M.~Cohen and S.~Westreich. \newblock Structure
constants related to symmetric {H}opf algebras. \newblock {\em J. Algebra},
324(11):3219--3240, 2010.

\bibitem {cohen2011conjugacy}M.~Cohen and S.~Westreich. \newblock Conjugacy
classes, class sums and character tables for {H}opf algebras.
\newblock {\em Comm. Algebra}, 39(12):4618--4633, 2011.

\bibitem {dijkgraaf1992quasi}R.~Dijkgraaf, V.~Pasquier, and P.~Roche.
\newblock Quasi {H}opf algebras, group cohomology and orbifold models.
\newblock {\em Nuclear Phys. B Proc. Suppl.}, 18B:60--72 (1991), 1990.
\newblock Recent advances in field theory (Annecy-le-Vieux, 1990).

\bibitem {drinfeld1986quantum}V.~G. Drinfeld. \newblock Quantum groups.
\newblock In \emph{Proceedings of the {I}nternational {C}ongress of
{M}athematicians, {V}ol. 1, 2 ({B}erkeley, {C}alif., 1986)}, pages 798--820.
Amer. Math. Soc., Providence, RI, 1987.

\bibitem {drinfeld1990almost}V.~G. Drinfeld. \newblock Almost cocommutative
{H}opf algebras. \newblock {\em Algebra i Analiz}, 1(2):30--46, 1989.

\bibitem {Etingof1997Some}P.~Etingof and S.~Gelaki. \newblock Some properties
of finite-dimensional semisimple {H}opf algebras.
\newblock {\em Math. Res. Lett.}, 5(1-2):191--197, 1998.

\bibitem {Etingof2015tensor}P.~Etingof, S.~Gelaki, D.~Nikshych, and V.~Ostrik.
\newblock {\em Tensor categories}, volume 205 of \emph{Mathematical Surveys
and Monographs}. \newblock American Mathematical Society, Providence, RI, 2015.

\bibitem {gould1993quantum}M.~D. Gould. \newblock Quantum double finite group
algebras and their representations. \newblock {\em Bull. Austral. Math. Soc.},
48(2):275--301, 1993.

\bibitem {Hu2007}J.~Hu and Y.-H. Zhang. \newblock The {$\beta$}-character
algebra and a commuting pair in {H}opf algebras.
\newblock {\em Algebr. Represent. Theory}, 10(5):497--516, 2007.

\bibitem {lyubashenko1986superanalysis}V.~V. Lyubashenko.
\newblock {\em Superanalysis and solutions to the triangles equation}.
\newblock PhD thesis, Phy.-Mat. Sciences, Kiev,(in Russian), 1986.

\bibitem {Majid1991Braided}S.~Majid. \newblock Braided groups and algebraic
quantum field theories. \newblock {\em Lett. Math. Phys.}, 22(3):167--175, 1991.

\bibitem {majid1991doubles}S.~Majid. \newblock Doubles of quasitriangular
{H}opf algebras. \newblock {\em Comm. Algebra}, 19(11):3061--3073, 1991.

\bibitem {masuoka1995semisimple}A.~Masuoka. \newblock Semisimple {H}opf
algebras of dimension {$6,8$}. \newblock {\em Israel J. Math.},
92(1-3):361--373, 1995.

\bibitem {Montgomery1993Hopf}S.~Montgomery.
\newblock {\em Hopf algebras and their actions on rings}, volume~82 of \emph{
CBMS Regional Conference Series in Mathematics}. \newblock Published for the
Conference Board of the Mathematical Sciences, Washington, DC; by the American
Mathematical Society, Providence, RI, 1993.

\bibitem {Ostrik2003module}V.~Ostrik. \newblock Module categories, weak {H}opf
algebras and modular invariants. \newblock {\em Transform. Groups},
8(2):177--206, 2003.

\bibitem {RADFORD1994583}D.~E. Radford. \newblock The trace function and
{H}opf algebras. \newblock {\em J. Algebra}, 163(3):583--622, 1994.

\bibitem {reshetikhin1988quantum}N.~Y. Reshetikhin and M.~A.
Semenov-Tian-Shansky. \newblock Quantum {$R$}-matrices and factorization
problems. \newblock {\em J. Geom. Phys.}, 5(4):533--550 (1989), 1988.

\bibitem {schneider2001some}H.-J. Schneider. \newblock Some properties of
factorizable {H}opf algebras. \newblock {\em Proc. Amer. Math. Soc.},
129(7):1891--1898, 2001.

\bibitem {Shi2016Finite}Y.-X. Shi. \newblock Finite dimensional {Nichols}
algebras over {Kac-Paljutkin} algebra {$H_{8}$}.
\newblock {\em math.QA:1612.03262v4}, 2016.

\bibitem {skryabin2007projectivity}S.~Skryabin. \newblock Projectivity and
freeness over comodule algebras. \newblock {\em Trans. Amer. Math. Soc.},
359(6):2597--2623, 2007.

\bibitem {MR0252485}M.~E. Sweedler. \newblock {\em {Hopf} algebras}.
\newblock Mathematics Lecture Note Series. W. A. Benjamin, Inc., New York, 1969.

\bibitem {yetter1990quantum}D.~N. Yetter. \newblock Quantum groups and
representations of monoidal categories.
\newblock {\em Math. Proc. Cambridge Philos. Soc.}, 108(2):261--290, 1990.

\bibitem {Zhu2015Relative}H.-X. Zhu. \newblock Relative {Y}etter-{D}rinfeld
modules and comodules over braided groups. \newblock {\em J. Math. Phys.},
56(4):041706, 11, 2015.
\end{thebibliography}
\end{document}